\theoremstyle{definition}
\newtheorem{definition}{Definition}[section]
\newtheorem{remark}{Remark}
\newtheorem{theorem}{Theorem}[section]
\newtheorem{lemma}[theorem]{Lemma}
\newtheorem{prop}[theorem]{Proposition}
\title{Ergodic Properties of Measures with Local Product Structure}
\author{Nawaf Alansari }
\date{}
\begin{document}

\maketitle

\begin{abstract}
    In this paper, we study ergodic properties of hyperbolic measures with local product structure. We show that all the classical results that hold in the case of SRB measure hold for these measures. In particular, we show the decomposition in countably many ergodic components, we prove the decomposition into K-components, and show that for hyperbolic measure with local product structure, The  K property implies the Bernoulli property. We also give some examples of measures where the results are applicable.
\end{abstract}
\section{Introduction}
In this paper we prove three main results, which taken together constitute what can be called Pesin theory for measures with local product structure. This theory was first developed by Pesin for smooth measures, in \cite{P} (see also \cite{BP}) and then later was extended to Sinai-Ruelle-Bowen measures by Ledrapier in \cite{L4}. 
Through out this paper, we assume that  $f : M \longrightarrow M$ is a $C^{1+\alpha}$ diffeomorphism of a Riemannian manifold $M$. We will consider an invariant hyperbolic measure $\mu$ for $f$, which admits a local product structure with respect to local unstable and stable manifolds (see definition \ref{local product structure} below), and for which we assume $h_\mu(x) > 0$.

\begin{theorem}\label{ergodic components}
Assume that $f$ preserves a hyperbolic measure with local product structure, then there exists invariant $\Lambda_1, \Lambda_2, \dots$ such that 
\begin{enumerate}
    \item $\mu(\bigcup_{n \geq 1} \Lambda_n) = 1$ and $\Lambda_i \cap \Lambda_j = \emptyset$ when $i \neq j$. 
    \item $\mu(\Lambda_n) > 0 $ for all $n\geq 1$.
    \item $(f|\Lambda_n, \mu|\Lambda_n)$ is ergodic for each $n \geq 1$.
\end{enumerate}
\end{theorem}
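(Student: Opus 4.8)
The plan is to carry out a nonuniformly hyperbolic Hopf argument, using the local product structure to upgrade the usual ``almost constant along stable/unstable leaves'' statements into genuine local ergodicity on product rectangles, and then to use the second countability of $M$ to see that only countably many ergodic components can carry positive measure.

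First I would recall the Pesin machinery: by the Oseledets theorem and hyperbolicity, $\mu$-a.e.\ $x$ has a measurable splitting into stable and unstable directions and, by Pesin's theory for $C^{1+\alpha}$ diffeomorphisms, local stable and unstable manifolds $W^s_{\mathrm{loc}}(x)$ and $W^u_{\mathrm{loc}}(x)$. Fix an exhaustion by Pesin blocks $\Lambda_1 \subseteq \Lambda_2 \subseteq \cdots$ with $\mu(\bigcup_\ell \Lambda_\ell)=1$, on each of which the sizes of these manifolds are bounded below, they depend continuously on the base point, and the stable and unstable holonomy maps are absolutely continuous. Then choose a countable dense set $\{\varphi_k\}\subseteq C(M)$; by Birkhoff's theorem the forward and backward averages $\varphi_k^{\pm}$ exist and coincide on a full-measure, $f$-invariant set $\mathcal R$, and the map $x\mapsto \mu_x$, where $\int \varphi\,d\mu_x := \varphi^{+}(x)$, realizes the ergodic decomposition: $x$ and $y$ lie in the same ergodic component iff $\varphi_k^{+}(x)=\varphi_k^{+}(y)$ for all $k$. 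It therefore suffices to prove that this equivalence relation has at most countably many classes of positive measure.

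The core step is local ergodicity. Fix $\ell$ with $\mu(\Lambda_\ell)>0$ and a density point $x_0$ of $\mu|_{\Lambda_\ell}$, and form a rectangle $R$ near $x_0$ as the union of the local stable (resp.\ unstable) manifolds through the points of $\Lambda_\ell$ close to $x_0$; then $\mu(R)>0$. Since each $\varphi_k^{+}$ is constant along $W^s_{\mathrm{loc}}$ wherever it is defined and each $\varphi_k^{-}$ is constant along $W^u_{\mathrm{loc}}$, and $\varphi_k^{+}=\varphi_k^{-}$ on $\mathcal R$, absolute continuity of the two holonomies shows that any ergodic component $C$ meets $R$ in a set that is, modulo $\mu|_R$-null sets, saturated by the stable plaques of $R$ and also by its unstable plaques. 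By Definition \ref{local product structure}, $\mu|_R$ is equivalent to a product measure with respect to the partition of $R$ into stable and unstable plaques, so a Fubini argument in these product coordinates forces $\mu(C\cap R)\in\{0,\mu(R)\}$; hence $\mu$-almost every point of $R$ belongs to a single ergodic component.

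Finally, since $M$ is second countable each $\Lambda_\ell$ is covered mod $0$ by countably many such rectangles, so the assignment of a point to its ergodic component is locally constant mod $0$ on the full-measure set $\bigcup_\ell \Lambda_\ell$ and therefore takes countably many values there. Enumerating them produces the desired sets $\Lambda_1,\Lambda_2,\dots$: they are pairwise disjoint, genuinely $f$-invariant (since $\varphi_k^{+}\circ f=\varphi_k^{+}$ on the invariant set $\mathcal R$), satisfy $\mu(\bigcup_n\Lambda_n)=1$ and $\mu(\Lambda_n)>0$, and $(f|_{\Lambda_n},\mu|_{\Lambda_n})$ is ergodic by construction. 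I expect the main obstacle to be the core step: constructing a rectangle that genuinely has positive measure and to which the local product structure applies, while keeping the null sets produced by the Hopf argument and by absolute continuity of the Pesin holonomies under control uniformly over the plaques of $R$ — this is precisely where hyperbolicity, the $C^{1+\alpha}$ hypothesis, and the local product structure hypothesis are all used together.
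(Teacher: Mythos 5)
Your proposal is correct and follows essentially the same route as the paper: Lemma \ref{constant on invariants} is your Hopf/Birkhoff step, Lemma \ref{ergodicity of Q} is your local-ergodicity-on-a-rectangle step (carried out there by projecting two generic points onto a single reference unstable plaque via the product measure $\mu^{\mathcal{P}}_{p,R}$, rather than by an abstract Fubini argument), and the countable cover by rectangles at Lebesgue density points of Pesin blocks is the concluding step in both. One small caution: the Pesin-theoretic absolute continuity of holonomies with respect to leaf volume that you invoke is neither available for a general hyperbolic measure nor needed here --- the mod-null saturation of an ergodic component by plaques follows from Rokhlin disintegration alone, and the equivalence of $\mu_R$ with a product measure, which is what actually powers the Fubini step, is exactly the content of Definition \ref{local product structure}.
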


\begin{theorem} \label{K Property}
Assume that $f$ preserves a hyperbolic ergodic measure with local product structure $\mu$. Then the Pinsker partition for $\mu$ is finite and consists of sets $\Lambda^1, \dots, \Lambda^m$ such that $f(\Lambda^i) = \Lambda^{i+1}$ for $i = 1, \dots, m-1$ and $f(\Lambda^m) = \Lambda^1$. For each $i =1, \dots m$, we have that $(f^m|\Lambda^i, \mu|\Lambda^i)$ is a K-automorphism. 
\end{theorem}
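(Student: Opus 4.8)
The plan is to describe the Pinsker partition $\pi(\mu)$ of $(f,\mu)$ in terms of the stable and unstable foliations and then exploit the local product structure exactly as in the proof of Theorem~\ref{ergodic components}, with one extra ingredient — ergodicity together with $f$-invariance of $\mu$ — used to pass from ``countably many atoms'' to ``finitely many atoms.'' First I would establish the identity $\pi(\mu)=\mathcal H^u=\mathcal H^s$, where $\mathcal H^u$ (resp. $\mathcal H^s$) denotes the measurable hull of the partition of $M$ into global unstable (resp. stable) manifolds — equivalently, the sub-$\sigma$-algebra of sets $A$ for which $\mathbf 1_A$ is, mod $0$, constant along each unstable (resp. stable) manifold. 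One half, $\mathcal H^u\subseteq\pi(\mu)$ (and, applied to $f^{-1}$, $\mathcal H^s\subseteq\pi(\mu)$), is the statement that the factor determined by the unstable foliation carries zero entropy. The other half, $\pi(\mu)\subseteq\mathcal H^u$, is obtained from a measurable partition $\xi$ subordinate to $W^u$ with $f\xi\ge\xi$, $\bigvee_{n\ge0}f^n\xi=\varepsilon$ (the partition into points) and $\bigwedge_{n\ge0}f^{-n}\xi=\mathcal H^u$, together with the Ledrappier–Young identity $H_\mu(f\xi\mid\xi)=h_\mu(f)$; the local product structure (Definition~\ref{local product structure}) is precisely what supplies the absolute-continuity properties of $\xi$ needed to run Ledrappier's SRB argument in this generality. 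Since the same applies to $f^{-1}$ and $\pi(\mu)$ is also the Pinsker partition of $f^{-1}$, one gets $\pi(\mu)=\mathcal H^u=\mathcal H^s$. I expect this step to be the main obstacle: it is where Pesin's absolute-continuity theory and the entropy formula are invoked and where ``local product structure'' must be shown to be a strong enough hypothesis.

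\textbf{Step 2: $\pi(\mu)$ has finitely many atoms.} Using the local product structure, cover $M$ mod $0$ by countably many rectangles $R_1,R_2,\dots$ on which $\mu$ is equivalent to the product of its conditionals on the unstable and stable plaques. By Step 1, any $A\in\pi(\mu)$ is simultaneously saturated by unstable manifolds and by stable manifolds, so $A\cap R_i$ is, mod $0$, both a union of unstable plaques of $R_i$ and a union of stable plaques of $R_i$; a Fubini argument against the product structure then forces $\mu(A\cap R_i)\in\{0,\mu(R_i)\}$. Hence each $R_i$ is contained mod $0$ in a single atom of $\pi(\mu)$, so $\pi(\mu)$ is purely atomic with at most countably many atoms $P_1,P_2,\dots$, each of positive $\mu$-measure. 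Now $\pi(\mu)$ is $f$-invariant, so $f$ permutes the $P_k$; by $f$-invariance of $\mu$, all $P_k$ in a single $f$-orbit have equal measure, and since these measures are positive and sum to at most $1$, every orbit is finite; ergodicity of $(f,\mu)$ then rules out more than one orbit. Therefore $\pi(\mu)=\{\Lambda^1,\dots,\Lambda^m\}$ for some $m$, with $f(\Lambda^i)=\Lambda^{i+1}$ for $i<m$, $f(\Lambda^m)=\Lambda^1$, and $\mu(\Lambda^i)=1/m$. (By contrast, in Theorem~\ref{ergodic components} the invariant $\sigma$-algebra is atomic by the same rectangle argument, but $f$ fixes each of its atoms, so no finiteness is forced there.)

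\textbf{Step 3: $f^m|_{\Lambda^i}$ is a K-automorphism.} Since $\pi(\mu)$ is $f$-invariant with zero conditional entropy and $h_\mu(f^m)=m\,h_\mu(f)$, the Pinsker partitions of $f$ and of $f^m$ coincide, so $\pi(f^m)=\{\Lambda^1,\dots,\Lambda^m\}$; restricting to the $f^m$-invariant atom $\Lambda^i$ shows that the Pinsker partition of $(f^m|_{\Lambda^i},\mu|_{\Lambda^i})$ is trivial. A measure-preserving system whose Pinsker partition is trivial is a K-automorphism, so $(f^m|_{\Lambda^i},\mu|_{\Lambda^i})$ is a K-automorphism for every $i$, which is the assertion.
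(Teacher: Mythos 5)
Your proposal is correct and follows essentially the same route as the paper: identify $\pi(f,\mu)$ with the measurable hulls of the stable and unstable foliations, run a Hopf-type argument against the local product structure on a rectangle to show the Pinsker partition has atoms of positive measure, and conclude by $f$-invariance, ergodicity and Rokhlin--Sinai. The one divergence is Step 1, which you flag as the main obstacle: the paper simply quotes $\pi(f,\mu)=\mathcal{H}(W^-)=\mathcal{H}(W^+)$ from \cite{BP}[Theorem 9.4.1], a fact valid for any hyperbolic measure of a $C^{1+\alpha}$ diffeomorphism, so the local product structure is not needed there --- it enters only in your Step 2, exactly as in the paper's argument on the rectangle $R$.
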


\begin{theorem} \label{Bernoulli property}
Assume that $f$ preserves a hyperbolic measure with local product structure $\mu$. Assume furthermore that $\mu$ has the K-property. Then $(f,\mu)$ is a Bernoulli automorphism.
\end{theorem}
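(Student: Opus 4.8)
The plan is to follow the classical ``$K$ implies Bernoulli'' scheme of Pesin \cite{P} (see also \cite{BP}) and of Ledrappier \cite{L4} for SRB measures, isolating the single point at which the absolute continuity of the unstable foliation is used and replacing it by the local product structure. The target reduction is: produce an increasing sequence $\mathcal P_1 \leq \mathcal P_2 \leq \cdots$ of finite partitions of $M$ whose iterated join $\bigvee_{k\in\mathbb Z} f^{-k}\bigl(\bigvee_n \mathcal P_n\bigr)$ is the full Borel $\sigma$-algebra mod $0$, such that each $\mathcal P_n$ is \emph{very weak Bernoulli} (VWB) for $(f,\mu)$. By Ornstein's theory a VWB partition is finitely determined, so the factor $\mathcal A_n := \bigvee_{k\in\mathbb Z} f^{-k}\mathcal P_n$ generated by each $\mathcal P_n$ is a Bernoulli system; since the $\mathcal A_n$ increase to the full $\sigma$-algebra, $(f,\mu)$ is the inverse limit of Bernoulli systems and hence itself Bernoulli. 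Note that the $K$-property forces $\mu$ to be ergodic (in fact mixing of all orders), so Theorems \ref{ergodic components} and \ref{K Property} apply trivially (one ergodic component, trivial Pinsker partition) and we may work on all of $M$; they also supply the nonuniform-hyperbolicity bookkeeping we reuse.

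First I would assemble the Pesin apparatus. Fix a regular set $\Lambda_\ell$ with $\mu(\Lambda_\ell) > 1-\delta$ on which local stable manifolds $W^s_{\mathrm{loc}}(x)$ and local unstable manifolds $W^u_{\mathrm{loc}}(x)$ have uniformly bounded geometry (size $\geq r_0$, angles bounded below, uniform forward contraction on $W^s_{\mathrm{loc}}$ and uniform forward expansion on $W^u_{\mathrm{loc}}$). Using the local product structure (see definition \ref{local product structure}), cover a subset of measure $> 1-2\delta$ by finitely many measurable ``rectangles'' $R$, saturated by pieces of stable and unstable manifolds, on which $\mu|_R$ is equivalent to the product $\mu^u_R \times \mu^s_R$ of its conditional measures along the unstable and stable slices with Radon--Nikodym density bounded away from $0$ and $\infty$; equivalently, the stable holonomy maps between unstable slices of $R$ are absolutely continuous with uniformly bounded Jacobians. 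I would then build the partitions $\mathcal P_n$ by subdividing these rectangles into small pieces whose boundaries are, outside a set of arbitrarily small measure, aligned with the invariant manifolds, so that the combinatorial $\mathcal P_n$-name of a point behaves well along stable and unstable leaves; refining and letting the mesh tend to $0$ while $\ell \to \infty$ makes the sequence generating. This construction is the standard one and I would not dwell on it.

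The heart of the matter is the VWB verification. Because $f^k$ contracts $W^s_{\mathrm{loc}}$ and $f^{-k}$ contracts $W^u_{\mathrm{loc}}$ uniformly on $\Lambda_\ell$, the forward $\mathcal P_n$-name of a point is, up to an error affecting only points near $\partial\mathcal P_n$ or outside the Pesin block, constant along local stable manifolds, and dually its backward name is constant along local unstable manifolds. Consequently, conditioning on a long past block and shifting forward by a gap $N$ large enough that contraction has taken effect amounts, up to small error, to conditioning on ``lying on a fixed local unstable leaf'' with conditional measure $\mu^u$ on that leaf; the conditional law of a long forward $\mathcal P_n$-block is then controlled by the transverse (stable) coordinate. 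The local product structure now closes the argument: the transverse law along one unstable leaf agrees, with $\bar{d}$-error controlled by the holonomy Jacobians, with the transverse law along any other, so the conditional law of the forward block is $\bar{d}$-close to its unconditional law, uniformly over all past blocks outside a set of measure $< \epsilon$. Summing the nonuniform errors over $\ell$ and letting $\delta \to 0$ and the mesh tend to $0$ yields the VWB estimate for each $\mathcal P_n$.

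The main obstacle I expect is not conceptual but bookkeeping: one must choose the $\mathcal P_n$ so that \emph{simultaneously} a neighborhood of $\partial\mathcal P_n$ has small measure, names are genuinely constant along the (variable-size, variable-geometry) local manifolds outside a small set, and the rectangles carry the product structure with uniformly bounded densities; and then bound the $\bar{d}$-distances by carefully summing the contributions of points that exit $\Lambda_\ell$ under iteration, points that land near a partition boundary after many steps, and points lying on short local manifolds, using only that each bad set has small measure. This is exactly the estimation carried out in \cite{L4} in the SRB case and in \cite{BP} in the smooth case; the only substantive change is that wherever absolute continuity of the unstable foliation was invoked to compare conditional measures across leaves, we instead invoke the absolute continuity of the stable holonomies provided directly by the local product structure.
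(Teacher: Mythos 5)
Your overall architecture coincides with the paper's: build an increasing generating sequence of finite partitions, verify the very weak Bernoulli property for each by matching conditioned atoms to the whole space using the hyperbolic structure and the product structure of $\mu$ on rectangles, and conclude via Ornstein--Weiss. But one step of your plan, as written, would fail, and it is precisely the step where the hypotheses of this paper differ from the smooth and SRB cases. You compare the transverse conditional laws along different unstable leaves of a rectangle via the stable holonomies, and you only assume that the holonomy Jacobians (equivalently, the density of $\mu_R$ with respect to the product measure) are \emph{bounded} between $C^{-1}$ and $C$. A bounded Jacobian yields a $\bar{d}$-error on the order of $C-1$, a fixed constant that does not tend to zero as the mesh of $\mathcal{P}_n$ shrinks or as the gap $N$ grows: for a general measure with local product structure the Jacobian is merely measurable, so shrinking the rectangle does not push it toward $1$. (In the smooth and SRB cases the Jacobian is continuous, which is why the classical argument you cite can simply take small rectangles; your closing claim that replacing absolute continuity of the foliation by absolute continuity of the holonomies is "the only substantive change" misses exactly this.) The paper's essential device is the Lusin-based Lemma \ref{Lusin based theorem} and Proposition \ref{section four main}: each rectangle is refined into sub-rectangles so that, on all but an $\epsilon$-fraction of them, the density with respect to the product measure at a reference point lies within $\epsilon$ of $1$ on a set of relative measure $1-\epsilon$ (Definition \ref{epsilon rugualr covering}); only then does Lemma \ref{almost measurable map lemma} produce $\epsilon$-measure-preserving matching maps, and hence via Lemma \ref{dbar distance estimate} a $\bar{d}$-distance that is actually small. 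Without this step your scheme proves a $\bar{d}$-bound of a fixed size, not an arbitrarily small one.

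A secondary but still substantive omission: you dismiss the construction of the generating partitions as standard, with boundaries "aligned with the invariant manifolds". For an arbitrary Borel measure one cannot arrange such alignment, and it is not automatic that the $\lambda^{-n}$-neighborhoods of the partition boundaries have summable measure; that summability is exactly what makes names eventually constant along local manifolds off a small set and yields the almost $u$-saturation of the conditioned atoms. The paper supplies this through the $(\lambda,\mu)$-regularity machinery (Proposition \ref{constuction of lambda mu partitions}, resting on the one-dimensional Lemma \ref{Borel measures on the interval}, and Proposition \ref{almost u-saturation property}), which your "conditioning on a long past block amounts to conditioning on a fixed unstable leaf" step silently requires. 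Relatedly, you should make explicit where the K-property enters quantitatively: it is what guarantees that a conditioned atom $A$ distributes its mass among the finitely many rectangles proportionally to $\mu$, which is the hypothesis needed to glue the per-rectangle matching maps into a single $\epsilon$-measure-preserving map $\theta:A\longrightarrow M$ (Lemma \ref{glueing lemma}).
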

\begin{remark}
For measures with local product structure the assumption that $h_{\mu}(f) > 0$ is quite natural in view of lemme, whose proof can be found in the appendix. \ref{entropy zero implies periodic}.
\end{remark}
\begin{lemma}\label{entropy zero implies periodic}
If $\mu$ is a hyperbolic measure with local product structure, then $h_\mu(f) = 0$ if and only if $\mu$ is supported on periodic orbits.
\end{lemma}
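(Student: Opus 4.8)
The plan is to prove both implications, the forward one ($h_\mu(f) = 0 \Rightarrow \mu$ carried by periodic orbits) being where all the substance lies. For the reverse implication I would argue as follows: if $\mu(\mathrm{Per}(f)) = 1$, write $\mathrm{Per}(f) = \bigsqcup_{p\ge 1} P_p$ with $P_p = \{x : f^p x = x,\ f^j x \ne x \text{ for } 0 < j < p\}$; each $P_p$ is $f$-invariant and $f^p$ is the identity on it, so for every $p$ with $\mu(P_p) > 0$ the normalized restriction $\mu_p$ has $h_{\mu_p}(f) = \tfrac1p h_{\mu_p}(f^p) = 0$, and by affinity of metric entropy over the countable $f$-invariant partition $\{P_p\}$ we get $h_\mu(f) = \sum_p \mu(P_p) h_{\mu_p}(f) = 0$.

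For the forward implication, the heart of the matter is the following standard consequence of Pesin/Ledrappier--Young theory: if $h_\mu(f) = 0$ then the conditional measures of $\mu$ on unstable manifolds are Dirac masses. To see this I would fix a measurable partition $\xi$ subordinate to the unstable foliation of the kind constructed by Ledrappier and Ledrappier--Young, so that $f\xi \preceq \xi$, $\bigvee_{n\ge0} f^{-n}\xi$ is the point partition (its atoms shrink to points because $f^{-1}$ contracts along $W^u$), $\bigwedge_{n\ge0} f^n\xi$ is the partition into global unstable manifolds, $H_\mu(\xi \mid f\xi) < \infty$, and $h_\mu(f) = H_\mu(\xi \mid f\xi)$; see \cite{L4}. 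Then $h_\mu(f) = 0$ forces $\xi \preceq f\xi$, hence $\xi = f\xi$ mod $0$, hence $\xi = f^n\xi$ mod $0$ for all $n$; taking joins of $f^{-n}\xi$ over $n\ge0$ shows $\xi$ equals the point partition mod $0$, while taking meets of $f^n\xi$ over $n\ge0$ shows $\xi$ equals the global unstable partition mod $0$. Hence the partition into (global, a fortiori local) unstable manifolds is the point partition mod $0$, i.e. $\mu^u_x = \delta_x$ for $\mu$-a.e. $x$. Applying the same to $f^{-1}$ — which preserves $\mu$, is again hyperbolic (same exponents with reversed signs), has $h_\mu(f^{-1}) = h_\mu(f) = 0$, and has the stable manifolds of $f$ as its unstable manifolds — gives $\mu^s_x = \delta_x$ for $\mu$-a.e. $x$.

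Now I would invoke the local product structure (Definition \ref{local product structure}): for $\mu$-a.e. $x$ there is a rectangle $R \ni x$ with $\mu(R) > 0$ on which $\mu|_R$ is equivalent, in the product coordinates, to $\mu^u_x \otimes \mu^s_x$. Since both factors are the Dirac mass at $x$, $\mu|_R$ is equivalent to $\delta_x$, hence $\mu|_R = \mu(R)\,\delta_x$ and $\mu(\{x\}) = \mu(R) > 0$. Thus $\mu$-a.e. point is an atom of $\mu$; a measure with a nontrivial continuous part fails this (its atoms form a countable, hence continuous-part-null, set), so $\mu$ is purely atomic. Finally, if $\mu(\{y\}) = c > 0$ then $\mu(\{f^n y\}) = c$ for every $n \in \mathbb Z$, so the orbit of $y$ is finite and $y$ is periodic; since $\mu$ is carried by its countably many atoms, $\mu(\mathrm{Per}(f)) = 1$, completing the proof. (The degenerate exponent signatures $\dim E^u_x = 0$ or $\dim E^s_x = 0$ on a positive-measure invariant set are included in the argument, with the corresponding step becoming trivial.)

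The main obstacle is the ingredient quoted from Pesin/Ledrappier--Young theory, namely the entropy formula $h_\mu(f) = H_\mu(\xi \mid f\xi)$ for a subordinate increasing partition and the accompanying generating properties of $\xi$: this is where the $C^{1+\alpha}$ hypothesis is really used, and some care is needed to choose $\xi$ so that its behavior under joins and meets is as stated and so that ``$\mu^u_x = \delta_x$'' is exactly the input required by Definition \ref{local product structure} in the last step. Everything else — the reverse implication and the passage from Dirac conditionals through the local product structure to atomicity of $\mu$ and then to periodicity — should be routine.
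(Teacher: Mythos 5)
Your overall strategy is sound and its first half coincides with the paper's: both proofs derive from $h_\mu(f)=0$, via the Ledrappier--Young identity $h_\mu(f)=H(f^{-1}\xi^u\mid\xi^u)$ for an increasing subordinate partition, that the conditional measures on unstable leaves are single Dirac masses (this is the paper's Lemma \ref{entropy zero atomic measure}). The two arguments diverge afterwards. You apply the same entropy argument to $f^{-1}$ to get Dirac conditionals on stable leaves as well, and then try to read off atomicity of $\mu$ from the product structure; this is legitimate, since that step uses only Ledrappier--Young and not the product structure, so no ``product structure for $f^{-1}$'' is needed. The paper instead stays with $f$: it uses the holonomy-equivalence in Definition \ref{local product structure} to transport the unstable atom $x_0$ across the rectangle, concluding $\mu(V^s_R(x_0))>0$, and then runs a Poincar\'e recurrence argument along that stable leaf (forward iterates contract it, producing a nested sequence of compact sets of constant positive measure shrinking to a point) to manufacture an atom. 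Your route avoids the recurrence argument at the cost of a second invocation of Ledrappier--Young; both are acceptable trades.

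There is, however, one step you should repair. You assert that the local product structure gives $\mu|_R\sim\mu^u_x\otimes\mu^s_x$ and conclude ``since both factors are Dirac, $\mu|_R\sim\delta_x$.'' That is not what Definition \ref{local product structure} provides: the second factor in $\mu^{\mathcal P}_{p,R}=(\pi_p)_*(\mu_{V^u_R(p)}\times\tilde\mu)$ is the \emph{factor (transverse) measure} on the quotient by unstable leaves, realized on $V^s_R(p)$, not the conditional measure $\mu^s_p$ on the stable leaf; these need not agree, and the factor measure is certainly not Dirac (otherwise $\mu_R$ would live on one unstable leaf). So the displayed implication does not follow as written. The fix is short and uses exactly the ingredients you already have: with $\mu_{V^u_R(p)}=\delta_q$, the product measure $\mu^{\mathcal P}_{p,R}$ is carried by the single stable leaf $V^s_R(q)$, hence so is $\mu_R$, giving $\mu(V^s_R(q))>0$; now your Dirac stable conditionals (applied to the at most countably many atoms of $\xi^s$ covering that leaf, exactly as the paper does on the unstable side in its first paragraph) show that $\mu$ restricted to $V^s_R(q)$ is purely atomic. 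From there your concluding argument (atoms have constant mass along orbits, hence finite orbits, hence periodicity) is correct, as is your reverse implication.
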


\subsection{Plan of the paper}
In the first part of section 2, we start by defining the notion of product structure on a measure space, and what it means for a measure to have a product structure. We also prove some results concerning relations between measures with product structure and other product measures one can define on the space. In the second part of section 2, we introduce the notion of a hyperbolic measure with local product structure. In section 3, we define $\epsilon$-coverings by rectangles and establish their existence for the measures under consideration. The central result in this section is Lemma \ref{Lusin based theorem}. In section 4, we define $(\lambda, \mu)$-regular partitions, establish their existence, and prove that such partitions are almost $u$-saturated. In section 5 we prove Theorem \ref{ergodic components} , which describes the ergodic components of measures with local product structure. Section 6 deals with the K-property and the proof that each ergodic component of positive measure breaks into finitely many components on which the system has the K property. In section 7, we prove the Bernoulli property for measures with local product structure, assuming that they satisfy the K-property. In section 8 we give some examples where our results apply. We collect some abstract measure theoretic lemmas that are used repeatedly throughout the paper, and for which we could not find an explicit source in the literature in the appendix. 

\subsection{Acknowledgment} 
I would like to thank my advisor, Prof  Y. Pesin for introducing me to this problem and for the many useful discussions we had while I was working on this paper.   
\begin{section}{Measures with Local Product Structure}

\subsection{Product Structure}
\begin{definition}
Let $M$ be a Riemannian manifold. We say that $X \subset M$ has \textit{product structure} if there are two transversal continuous families of smooth embedded disks $\left(V_{ver}(x)\right)_{x \in X}$ and $\left(V_{hor}(x) \right)_{x \in X}$, such that
\begin{enumerate}
    \item $V_{hor}(x_1) = V_{hor}(x_2) $ or $V_{hor}(x_1) \cap V_{hor}(x_2) = \emptyset$, for all $x_1, x_2 \in X$,
    \item $V_{ver}(x_1) = V_{ver}(x_2) $ or $V_{ver}(x_1) \cap V_{ver}(x_2) = \emptyset$, for all $x_1, x_2 \in X$,
    \item $V_{hor}(x_1) \cap V_{ver}(x_2)$ consists of exactly one point, 
    \item  $X = \left(\bigcup_{x \in X} V_{hor}(x) \right) \cap \left(\bigcup_{x \in X} V_{ver}(x) \right)$.
    
\end{enumerate}

Let $\xi_{ver}$, $\xi_{hor}$ the measurable partitions $\left(V_{ver}(x) \cap X \right)_{x\in X}$ and $\left(V_{hor}(x) \cap X \right)_{x\in X}$. \end{definition}
Let $\mathcal{F}$ be the Borel $\sigma$-algebra of $X$. Then, given a measurable partition $\xi$ of $X$, we can define a sub $\sigma$-algebra $\mathcal{F}_\xi \subset \mathcal{F}$ by 
$$
\mathcal{F}_\xi := \left\{A \in \mathcal{F} : A = \bigcup_{x \in A} \xi(x) \right\}.
$$
We can also equip the elements of the partition $\xi$ with a $\sigma$-algebra $\mathcal{F}_\xi(C)$, for $C\in \xi$, defined by 
$$
\mathcal{F}_\xi(C) := \left \{A \cap C : A \in \mathcal{F} \right\}.
$$
\begin{definition}
Let $X$ be a set with product structure, and $\xi_{ver}$ and $\xi_{hor}$ be the two measurable partitions obtained from the product structure. The \textit{vertical holonomy } maps $\pi_{x, y} :\xi_{hor}(x) \longrightarrow \xi_{hor}(y)$ is given by 
$$
\pi_{x,y}(z) = \xi_{ver}(z) \cap \xi_{hor}(y).
$$
This map is well defined under our assumptions.
\end{definition}

\begin{lemma}
Let $X$ be a set with product structure, and $\xi_{ver}$ and $\xi_{hor}$ be the two measurable partitions obtained from the product structure. Then the vertical holonomy map $\pi_{x,y}$ is an isomorphism between the measure spaces $(\xi_{hor}(x), \mathcal{F}_{\xi_{hor}}(x))$ and $(\xi_{hor}(y), \mathcal{F}_{\xi_{hor}}(y))$. 
\end{lemma}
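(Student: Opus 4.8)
The plan is to establish three things in turn: that $\pi_{x,y}$ is a bijection whose inverse is $\pi_{y,x}$; that both $\pi_{x,y}$ and $\pi_{y,x}$ are continuous for the subspace topologies that $\xi_{hor}(x) = V_{hor}(x)\cap X$ and $\xi_{hor}(y) = V_{hor}(y)\cap X$ inherit from $M$; and that, once we identify $\mathcal{F}_{\xi_{hor}}(x)$ with the Borel $\sigma$-algebra of the subspace $\xi_{hor}(x)$, a homeomorphism between these subspaces is automatically a measurable isomorphism. The content of the lemma is then immediate.

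For the bijection I would check directly that $\pi_{y,x}\circ\pi_{x,y} = \mathrm{id}_{\xi_{hor}(x)}$, the reverse identity following by interchanging the roles of $x$ and $y$. Fix $z\in\xi_{hor}(x)$ and put $p=\pi_{x,y}(z)=\xi_{ver}(z)\cap\xi_{hor}(y)$; this is a single point, since by property (3) the ambient intersection $V_{ver}(z)\cap V_{hor}(y)$ is one point and by property (4) that point lies in $X$. Because $p\in V_{ver}(z)$, property (2) forces $V_{ver}(p)=V_{ver}(z)$, hence $\xi_{ver}(p)=\xi_{ver}(z)$, so $\pi_{y,x}(p)=\xi_{ver}(z)\cap\xi_{hor}(x)$; by property (3) this is the unique point of $V_{ver}(z)\cap V_{hor}(x)$, which is $z$ itself. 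Thus $\pi_{y,x}$ is a two-sided inverse of $\pi_{x,y}$, and in particular $\pi_{x,y}$ is a bijection.

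For continuity I use that $(V_{ver}(w))_{w\in X}$ and $(V_{hor}(w))_{w\in X}$ are continuous families of smooth embedded disks that are transversal. If $z_n\to z$ in $\xi_{hor}(y)$ (so $z_n, z\in X$ and $\xi_{ver}(z_n)$, $\xi_{ver}(z)$ are defined), then $V_{ver}(z_n)\to V_{ver}(z)$ in the $C^1$ sense on compact pieces; since $V_{ver}(z)$ meets the fixed disk $V_{hor}(x)$ transversally in the single point $\pi_{y,x}(z)$, the implicit function theorem gives $V_{ver}(z_n)\cap V_{hor}(x)\to V_{ver}(z)\cap V_{hor}(x)$, and as all of these intersection points lie in $X$ we conclude $\pi_{y,x}(z_n)\to\pi_{y,x}(z)$ in the subspace topology. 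Hence $\pi_{y,x}$ is continuous, and by the symmetric argument so is $\pi_{x,y}$, so $\pi_{x,y}$ is a homeomorphism of the subspaces. Finally, for a Borel subset $X$ and any $C\subseteq X$ the trace $\sigma$-algebra $\mathcal{F}_\xi(C)=\{A\cap C:A\in\mathcal{F}\}$ is precisely the Borel $\sigma$-algebra of $C$ with the subspace topology, so a homeomorphism is in particular bi-measurable, which is exactly the assertion.

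The genuinely delicate point is the continuity step: one must fix the meaning of ``continuous family of transversal smooth disks'' strongly enough that the holonomy between two of the transversal disks depends continuously on the leaf. This is a standard fact for continuous laminations, and I would either invoke it as such or, to be self-contained, recall the precise continuity hypothesis on the families and give the short implicit-function-theorem argument above. Everything else is a direct unwinding of the four defining properties of a product structure.
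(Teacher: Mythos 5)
Your proposal is correct and follows essentially the same route as the paper's proof: observe that $\pi_{x,y}^{-1}=\pi_{y,x}$, reduce measurability to continuity of the holonomy, and deduce continuity from the continuity of the vertical family. The paper states these steps in two sentences; you simply fill in the details (the explicit inverse check, the implicit-function-theorem argument, and the identification of $\mathcal{F}_{\xi_{hor}}(\cdot)$ with the subspace Borel $\sigma$-algebra), all of which are consistent with the intended argument.
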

\begin{proof}
Since $\pi_{x, y}^{-1} = \pi_{y, x}$, we only need to show that $\pi_{x, y}$ is measurable. Now, the lemma follows from the continuity of the vertical holonomy map $\pi_{x,y}$, which follows from the continuity of the vertical family $\left(V_{ver}(x)\right)_{x \in X}$. 
\end{proof}

\begin{lemma}
Let $X$ be a set with product structure, and $\xi_{ver}$ and $\xi_{hor}$ be the two measurable partitions obtained from the product structure. For any $p \in X$, let us define the product set 
$$
X_p := \xi_{hor}(p) \times \xi_{ver}(p)
$$
equipped with the $\sigma$-algebra generated by $\mathcal{F}(\xi_{hor}(p)) \times \mathcal{F}(\xi_{ver}(p))$. Let $\pi_p:X_p \longrightarrow X$ be the map defined by 
$$
\pi_p(x,y) := \xi_{ver}(x) \cap \xi_{hor}(y).
$$
Then $\pi_p$ is a measurable isomorphism between $(X_p, \sigma(\mathcal{F}(\xi_{hor}(p))\times \mathcal{F}(\xi_{ver}(p))$ and $(X, \mathcal{F})$. 
\end{lemma}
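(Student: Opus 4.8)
The plan is to establish, in order, that $\pi_p$ is well defined as a map into $X$, that it is a bijection with an explicitly describable inverse, and finally that both $\pi_p$ and $\pi_p^{-1}$ are measurable; the first two are bookkeeping with axioms (1)--(4) of a product structure, while the measurability is where the continuity of the transversal families enters. For well-definedness: if $(x,y)\in\xi_{hor}(p)\times\xi_{ver}(p)$ then $x,y\in X$, so by axiom (3) the set $V_{ver}(x)\cap V_{hor}(y)$ is a single point $z$, and by axiom (4) $z\in X$ since $z$ lies both on a vertical leaf and on a horizontal leaf; hence $\xi_{ver}(x)\cap\xi_{hor}(y)=V_{ver}(x)\cap V_{hor}(y)\cap X=\{z\}$, and $\pi_p$ indeed maps $X_p$ into $X$.

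\emph{Bijectivity.} I would exhibit the inverse directly. For $z\in X$ set $q_{hor}(z)=\xi_{ver}(z)\cap\xi_{hor}(p)$ and $q_{ver}(z)=\xi_{hor}(z)\cap\xi_{ver}(p)$; these are single points of $X$ by the argument above, lying in $\xi_{hor}(p)$, resp.\ $\xi_{ver}(p)$. Using that two leaves of the same family which share a point must coincide (axioms (1) and (2)), one gets $\xi_{ver}(q_{hor}(z))=\xi_{ver}(z)$ and $\xi_{hor}(q_{ver}(z))=\xi_{hor}(z)$, so $\pi_p(q_{hor}(z),q_{ver}(z))\in\xi_{ver}(z)\cap\xi_{hor}(z)=\{z\}$ by axiom (3). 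The symmetric computation (for $z=\pi_p(x,y)$ one has $\xi_{ver}(z)=\xi_{ver}(x)$ and $\xi_{hor}(z)=\xi_{hor}(y)$, whence $q_{hor}(z)=x$, $q_{ver}(z)=y$) gives $(q_{hor},q_{ver})\circ\pi_p=\mathrm{id}_{X_p}$. Thus $\pi_p$ is a bijection with inverse $z\mapsto(q_{hor}(z),q_{ver}(z))$.

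\emph{Measurability.} The key input is that, for a transversal continuous family of smooth embedded disks, the unique intersection point of a vertical leaf with a horizontal leaf depends continuously on the pair of leaves. Granting this, $(x,y)\mapsto V_{ver}(x)\cap V_{hor}(y)$ is continuous on $\xi_{hor}(p)\times\xi_{ver}(p)$ with the subspace topologies (the first family varies continuously with $x$, the second with $y$, and they are transversal), so $\pi_p$ is continuous for the product topology on $X_p$; fixing the leaf $V_{hor}(p)$, resp.\ $V_{ver}(p)$, and letting the other leaf vary continuously with the base point shows $q_{hor}$ and $q_{ver}$ are continuous, so $\pi_p^{-1}$ is continuous. (Restricted to a single horizontal leaf $\xi_{hor}(x)$, the map $q_{hor}$ is precisely the vertical holonomy $\pi_{x,p}$ of the preceding lemma, so only the global rather than fiberwise continuity is new here.) Finally, since $V_{hor}(p)$ and $V_{ver}(p)$ are smooth disks, the subspaces $\xi_{hor}(p)$ and $\xi_{ver}(p)$ are second countable, hence $\sigma(\mathcal{F}(\xi_{hor}(p))\times\mathcal{F}(\xi_{ver}(p)))$ coincides with the Borel $\sigma$-algebra of $X_p$ for the product topology; a pair of mutually inverse continuous bijections between such spaces is then a Borel isomorphism, which is exactly the claim.

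The step I expect to be the real obstacle is the continuity-of-intersection claim invoked throughout the last paragraph: turning the informal hypothesis ``two transversal continuous families of embedded disks'' into the precise statement that the intersection point is a continuous function of the two leaves. I would handle this by passing to local charts in which both leaves appear as graphs of $C^1$ maps over complementary subspaces and applying the implicit function theorem with parameters; alternatively, if only measurability (rather than continuity) of $\pi_p$ and $\pi_p^{-1}$ is required, it is enough to observe that the intersection map is a Borel function, which follows from approximating leaves within the continuous family. Everything else reduces to routine manipulation of axioms (1)--(4).
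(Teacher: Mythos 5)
Your proposal is correct and follows essentially the same route as the paper: the paper's own proof simply asserts that $\pi_p$ is a homeomorphism (by continuity of the two families) and that both $\sigma$-algebras are the relevant Borel $\sigma$-algebras, which is exactly what you establish, only with the bijectivity, the continuity of the intersection map, and the identification of the product $\sigma$-algebra with the Borel $\sigma$-algebra of the product topology spelled out in detail.
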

\begin{proof}
We can in fact, under our assumptions, prove that $\pi_p$ is a homeomorphism. This follows from the fact the families $\left( V_{hor}(x)\right)_{x\in X}$ and $\left( V_{ver}(x)\right)_{x\in X}$ are continuous. Now, since $\mathcal{F}$ and $\sigma(\mathcal{F}(\xi_{hor}(p))\times \mathcal{F}(\xi_{ver}(p))$ are the Borel $\sigma$-algebras of $X$ and $X_p$, the claims follows. 
\end{proof}

Now, let $\mu$ be a Borel measure on $(X, \mathcal{F})$. We introduce a family of product measures $(\mu_p^{\mathcal{P}})_{p \in X}$ as follows 

\begin{definition}
Let $X$ be a set with product structure, and $\xi_{ver}$ and $\xi_{hor}$ be the two measurable partitions obtained from the product structure. Let $\mu$ be a Borel measure on $(X, \mathcal{F})$. For any $p \in X$, we define the product measure $\mu_p^{\mathcal{P}}$ on $X$ to be 
$$
\mu_p^{\mathcal{P}} := (\pi_p)_*\left( \mu_{\xi_{hor}(p)} \times \tilde \mu_{\xi_{ver}(p)} \right ),
$$
where $\mu_{\xi_{hor}(p)}$ is the conditional  measure, and $\tilde \mu_{\xi_{ver}(p)}$ is the factor measure obtained from Rokhlin's disintegration theorem. The latter measure can be realized as a measure on $\xi_{ver}(p)$.
\end{definition}
In general, the measures $\mu_p^{\mathcal{P}}$ might not have any relation to the measure $\mu$. A condition for these measures to be related to each other is for the measure $\mu$ to behave well with the product structure on the space $X$. We formalize this idea in the next definition

\begin{definition}\label{measure product strucutre}
Let $X$ be a set with product structure, and $\xi_{ver}$ and $\xi_{hor}$ be the two measurable partitions obtained from the product structure. Let $\mu$ be a Borel measure on $(X, \mathcal{F})$. We say that the measure $\mu$ has \textit{product structure} with respect to $\xi_{hor}$ and $\xi_{hor}$ if for $\mu$-a.e. $x, y\in X $, the measure $\mu_{\xi_{hor}(y)}$ and the pushforward measure $(\pi_{x,y})_*(\mu_{\xi_{hor}(x)})$ are equivalent. 
\end{definition}

\begin{lemma}\label{density lemma}
Let $X$ be a set with product structure, and $\xi_{ver}$ and $\xi_{hor}$ be the two measurable partitions obtained from the product structure. Assume furthermore that the measure $\mu$ has a product structure with respect to $\xi_{hor}$ and $\xi_{ver}$. Then, for $\mu$-a.e. $p \in X $, the measures $\mu$ and $\mu_p^{\mathcal{P}}$ are equivalent. Furthermore, we have that 
$$
\frac{d\mu}{d\mu_p^{\mathcal{P}}}(x) = \frac{d(\pi_{x, p})_*(\mu_{\xi_{hor}(x)})}{d\mu_{\xi_{hor}(p)}}(\pi_{x, p}(x)).
$$
\end{lemma}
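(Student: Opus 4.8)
The plan is to disintegrate both $\mu$ and $\mu_p^{\mathcal{P}}$ over the partition $\xi_{hor}$, observe that they have the same factor measure, compare their conditional measures leaf by leaf using Definition \ref{measure product strucutre}, and then conclude with the abstract gluing lemma recorded in the appendix.

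First I would unwind the definition of $\mu_p^{\mathcal{P}}$. By construction $\tilde\mu_{\xi_{ver}(p)}$ is the factor measure of the Rokhlin disintegration of $\mu$ along $\xi_{hor}$, realized on $\xi_{ver}(p)$ via the bijection $y \mapsto \xi_{hor}(y)$ between $\xi_{ver}(p)$ and $X/\xi_{hor}$, so that
$$
\mu = \int_{\xi_{ver}(p)} \mu_{\xi_{hor}(y)}\, d\tilde\mu_{\xi_{ver}(p)}(y).
$$
Computing $\mu_p^{\mathcal{P}}(A) = (\mu_{\xi_{hor}(p)}\times \tilde\mu_{\xi_{ver}(p)})(\pi_p^{-1}A)$ by Fubini (integrating the $\xi_{ver}(p)$-coordinate last) and using that for fixed $y$ the slice map $a \mapsto \pi_p(a,y) = \xi_{ver}(a)\cap\xi_{hor}(y)$ is exactly the vertical holonomy $\pi_{p,y}\colon \xi_{hor}(p)\to\xi_{hor}(y)$, I obtain
$$
\mu_p^{\mathcal{P}} = \int_{\xi_{ver}(p)} (\pi_{p,y})_*\bigl(\mu_{\xi_{hor}(p)}\bigr)\, d\tilde\mu_{\xi_{ver}(p)}(y).
$$
Since the holonomies depend measurably on the leaf (holonomy lemma), the integrand is a measurable family of probability measures on the atoms of $\xi_{hor}$, so by a.e.-uniqueness of Rokhlin disintegrations this is the disintegration of $\mu_p^{\mathcal{P}}$ along $\xi_{hor}$. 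Thus $\mu$ and $\mu_p^{\mathcal{P}}$ share the partition $\xi_{hor}$ and the factor measure $\tilde\mu_{\xi_{ver}(p)}$, with conditionals $\mu_{\xi_{hor}(y)}$ and $(\pi_{p,y})_*(\mu_{\xi_{hor}(p)})$ respectively.

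Next, by Definition \ref{measure product strucutre} there is a full $\mu$-measure set of points $p$ such that $\mu_{\xi_{hor}(y)}$ and $(\pi_{p,y})_*(\mu_{\xi_{hor}(p)})$ are equivalent for $\tilde\mu_{\xi_{ver}(p)}$-a.e. $y$. For such $p$ the abstract lemma from the appendix (two measures whose disintegrations over a common partition have a common factor measure and a.e.-equivalent conditionals are equivalent, with pointwise density given by the density of the conditionals) yields $\mu \sim \mu_p^{\mathcal{P}}$ and
$$
\frac{d\mu}{d\mu_p^{\mathcal{P}}}(x) = \frac{d\mu_{\xi_{hor}(x)}}{d(\pi_{p,x})_*(\mu_{\xi_{hor}(p)})}(x)
$$
for $\mu$-a.e. $x$. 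Finally, $\pi_{x,p}$ is the inverse of $\pi_{p,x}$ and is a measure-space isomorphism, so pushing both measures on $\xi_{hor}(x)$ forward by $\pi_{x,p}$ sends $(\pi_{p,x})_*(\mu_{\xi_{hor}(p)})$ to $\mu_{\xi_{hor}(p)}$ and $\mu_{\xi_{hor}(x)}$ to $(\pi_{x,p})_*(\mu_{\xi_{hor}(x)})$; since a Radon--Nikodym derivative is invariant under an isomorphism (evaluated at the image point), the right-hand side equals $\dfrac{d(\pi_{x,p})_*(\mu_{\xi_{hor}(x)})}{d\mu_{\xi_{hor}(p)}}(\pi_{x,p}(x))$, which is the asserted formula.

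\textbf{Main obstacle.} The delicate point is the bookkeeping with the ``$\mu$-a.e. $x,y$'' quantifier in Definition \ref{measure product strucutre}: one must show that equivalence of conditionals holding for almost every pair of leaves descends, for almost every base point $p$, to equivalence for $\tilde\mu_{\xi_{ver}(p)}$-a.e. $y$. This is a Fubini argument, but it requires the relevant exceptional sets to be genuinely leaf-measurable so that the product/quotient decomposition applies. A secondary technical point is confirming that the integral representation of $\mu_p^{\mathcal{P}}$ above is literally the Rokhlin disintegration, so that the appendix lemma applies without modification.
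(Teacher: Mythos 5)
Your proposal is correct and follows essentially the same route as the paper: the paper pulls $\mu$ back to the product model $X_p$ via $\pi_p$ and compares it there with $\mu_{\xi_{hor}(p)}\times\tilde\mu_{\xi_{ver}(p)}$ using the appendix lemma on measures with a common factor measure and leafwise-equivalent conditionals, which is the mirror image of your pushing the product measure forward and comparing disintegrations over $\xi_{hor}$ directly on $X$. The final change of variables by $\pi_{x,p}$ to match the stated density formula, and the Fubini bookkeeping you flag, are handled (implicitly) the same way in the paper.
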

\begin{proof}
Let $\hat \mu = \pi_p ^* (\mu$) be the pull back of $\mu$ to $X_p$. Let $\hat \xi_{ver}$ and $\hat \xi_{hor}$ be the pull backs of the partitions $\xi_{ver}$ and $\xi_{hor}$ under the map $\pi_p$. Then, it is straightforward to see that $\hat \xi_{ver}(x,y) =  \{x\} \times \xi_{ver}(p)$ and $\hat \xi_{hor}(x,y) = \xi_{hor}(p) \times \{y\}$. Now, for $y \in \xi_{ver}(p)$, let $\hat \mu_y$ be the conditional measure of $\mu$ on $\hat \xi_{hor}(p, y)$. By the product structure of the measure $\mu$, the measures $(\hat \pi_{y, y'})_* \mu_y$ and $\mu_{y'}$ are equivalent for $\tilde \mu_{\xi_{ver}(p)}$-a.e. $y, y' \in  \xi_{ver}(p)$. Applying lemma \ref{equiv of product measures}, we see that the measures $\hat \mu$ and $\mu_{\xi_{hor}(p)} \times \tilde \mu_{\xi_{ver}(p)}$ are equivalent, and satisfy
$$
\frac{d \hat \mu}{d \mu_{\xi_{hor}(p)} \times \tilde \mu_{\xi_{ver}(p)}}(x, y) = \frac{d(\hat \pi_{y, p})_*(\mu_{y})}{d\hat\mu_{p}}(\hat \pi_{y, p}(x,y)).
$$
The lemma then follows by pushing the measures $\mu_{\xi_{hor}(p)} \times \tilde \mu_{\xi_{ver}(p)}$ and $\hat \mu$ to $X$ using the map $\pi_p$.
\end{proof}

\begin{lemma}\label{composition trick}
Let $X$ be a set with product structure, and $\xi_{ver}$ and $\xi_{hor}$ be the two measurable partitions obtained from the product structure. For $p \in X$, let $\mu_{p}^{\mathcal{P}}$ be the product measure at $p$. Let us define the density $\rho_p(x) = \frac{d\mu}{d\mu_p^{\mathcal{P}}}$. Then for any point $y \in \xi_{ver}(x)$, we have 
$$
\frac{\rho_p(x)}{\rho_p(y)} = \rho_y(x).
$$
\end{lemma}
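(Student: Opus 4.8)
The plan is to derive the identity directly from the explicit density formula of Lemma \ref{density lemma}, which states that for $\mu$-a.e.\ $p$ and $\mu$-a.e.\ $x$,
\[
\rho_p(x) \;=\; \frac{d(\pi_{x,p})_*(\mu_{\xi_{hor}(x)})}{d\mu_{\xi_{hor}(p)}}\bigl(\pi_{x,p}(x)\bigr),
\]
and likewise with any other admissible base point in place of $p$. Two elementary geometric observations do most of the work. First, since $y\in\xi_{ver}(x)$ and the vertical leaves form a partition, $\xi_{ver}(y)=\xi_{ver}(x)$; hence the holonomy images $\pi_{x,p}(x)=\xi_{ver}(x)\cap\xi_{hor}(p)$ and $\pi_{y,p}(y)=\xi_{ver}(y)\cap\xi_{hor}(p)$ are the same point of $\xi_{hor}(p)$, which I will call $\bar x$. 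Second, $\pi_{x,y}(x)=\xi_{ver}(x)\cap\xi_{hor}(y)=y$, because $y$ lies in both $\xi_{ver}(x)$ and $\xi_{hor}(y)$ and this intersection is a single point.

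Next I would record the cocycle property of the vertical holonomies, $\pi_{x,p}=\pi_{y,p}\circ\pi_{x,y}$ as maps $\xi_{hor}(x)\to\xi_{hor}(p)$. This is immediate from the defining formula $\pi_{a,b}(z)=\xi_{ver}(z)\cap\xi_{hor}(b)$ together with the fact that $\pi_{x,y}(z)$ lies on the same vertical leaf as $z$. Pushing forward the conditional measure, this gives
\[
(\pi_{x,p})_*\mu_{\xi_{hor}(x)} \;=\; (\pi_{y,p})_*\Bigl((\pi_{x,y})_*\mu_{\xi_{hor}(x)}\Bigr).
\]

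Now I combine these ingredients. Writing $\rho_p(x)$ and $\rho_p(y)$ via the formula above, both are Radon--Nikodym derivatives on $\xi_{hor}(p)$ against the common reference $\mu_{\xi_{hor}(p)}$ and both are evaluated at the \emph{same} point $\bar x$; since $\mu$ has product structure, the measures $(\pi_{x,p})_*\mu_{\xi_{hor}(x)}$, $(\pi_{y,p})_*\mu_{\xi_{hor}(y)}$ and $\mu_{\xi_{hor}(p)}$ are mutually equivalent a.e., so the chain rule for Radon--Nikodym derivatives yields
\[
\frac{\rho_p(x)}{\rho_p(y)} \;=\; \frac{d(\pi_{x,p})_*\mu_{\xi_{hor}(x)}}{d(\pi_{y,p})_*\mu_{\xi_{hor}(y)}}(\bar x).
\]
Substituting the cocycle identity into the numerator and then applying the change-of-variables rule for densities under the measure-space isomorphism $\pi_{y,p}\colon\xi_{hor}(y)\to\xi_{hor}(p)$ (established earlier for vertical holonomies), together with $\bar x=\pi_{y,p}(y)$, this becomes
\[
\frac{d(\pi_{x,y})_*\mu_{\xi_{hor}(x)}}{d\mu_{\xi_{hor}(y)}}(y) \;=\; \frac{d(\pi_{x,y})_*\mu_{\xi_{hor}(x)}}{d\mu_{\xi_{hor}(y)}}\bigl(\pi_{x,y}(x)\bigr) \;=\; \rho_y(x),
\]
where the last equality is the density formula of Lemma \ref{density lemma} with $p$ replaced by $y$. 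This proves the claim.

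The only genuine care required is the null-set bookkeeping: the formula from Lemma \ref{density lemma} holds only for a.e.\ base point and a.e.\ point, the conditionals $\mu_{\xi_{hor}(\cdot)}$ are defined only on a full-measure set of leaves, and each Radon--Nikodym derivative is defined only almost everywhere on its leaf. So one should fix the representatives of the densities coming from Lemma \ref{density lemma} and verify that the displayed chain of equalities is valid for $\mu$-a.e.\ $p$, for $\mu$-a.e.\ $x$, and --- along the vertical leaf through such an $x$ --- for the appropriate a.e.\ $y$. I expect this bookkeeping to be the main obstacle; the algebraic core (the holonomy cocycle plus the transformation rule for densities) is purely formal.
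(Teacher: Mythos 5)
Your proposal is correct and follows essentially the same route as the paper: both rest on the holonomy cocycle identity $\pi_{x,p}=\pi_{y,p}\circ\pi_{x,y}$, the resulting multiplicative decomposition of the Radon--Nikodym derivatives, and the explicit density formula of Lemma \ref{density lemma}. Your version merely spells out the intermediate steps (that $\pi_{x,y}(x)=y$, that $\pi_{x,p}(x)=\pi_{y,p}(y)$, and the change-of-variables under $\pi_{y,p}$) more explicitly than the paper does.
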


\begin{proof}
This follows from the composition property of the holonomy maps 
$$
\pi_{y, p} \circ \pi_{x,  y} = \pi_{x,p},
$$
which implies that 
$$
\frac{d(\pi_{x, p})_*(\mu_{\xi_{hor}(x)})}{d\mu_{\xi_{hor}(p)}}(\pi_{x, p}(x)) = \frac{d(\pi_{x, y})_*(\mu_{\xi_{hor}(x)})}{d\mu_{\xi_{hor}(y)}}(\pi_{x, y}(x)) \frac{d(\pi_{y, p})_*(\mu_{\xi_{hor}(y)})}{d\mu_{\xi_{hor}(p)}}(\pi_{y, p}(y)).
$$
Now, apply lemma \ref{density lemma} to get 
$$
\rho_p(x) = \frac{d(\pi_{x, p})_*(\mu_{\xi_{hor}(x)})}{d\mu_{\xi_{hor}(p)}}(\pi_{x, p}(x)).
$$
Therefore, we have $\rho_p(x) = \rho_y(x) \rho_p(y)$, from which the lemma follows.
\end{proof}

\subsection{Hyperbolic Measures With Local Product Structure}
In this subsection, we introduce a class of hyperbolic invariant measures $\mu$, for a $C^{1+\alpha}$ diffeomorphism which admit a \textit{local product stucture}. We start by stating the classic definition of a hyperbolic rectangle, then we use this definition, and the definition of product structure established in the previous section. In this section, and throughout this paper, we denote the normalized restriction of a measure $\mu$ to a measurable set $A$ by $\mu_A$.

\begin{definition} \label{Rectangle}
Let $f$ be a $C^{1+\alpha}$ diffeomorphism of a smooth compact Riemannian manifold $M$. Given an integer $j$ between $1$ and $\dim(M)-1$,  $0<\lambda<1<\nu$, $\epsilon>0$, and $\ell>0$ a number. Consider the Pesin block $\Lambda^\ell:= \Lambda^\ell_{\lambda, \nu, \epsilon, j}$ (see \cite{BP}[Definition 2.2.7] for a complete definition). We say that $R \subset \Lambda^\ell$ is a hyperbolic rectangle if for any $x, y \in R$, we have 
$$
V^s_{loc}(x) \cap V^u_{loc}(y) \in R,
$$ 
where $V^s_{loc}(z)$ and $V^u_{loc}(z)$ are the local stable and unstable manifolds for $z \in \Lambda^\ell$ (see \cite{BP}[Chapter 9.5]).   
\end{definition}

\begin{definition}\label{local product structure}
Let $f$ be a $C^{1+\alpha}$ be a diffeomorphism of a smooth compact Riemannian manifold $M$. Assume that $\mu$ is an invariant hyperbolic measure for $f$. We say that $\mu$ admits a \textit{local product structure} if for any hyperbolic rectangle $R \subset \Lambda^\ell$ of positive $\mu$-measure, one has that the measure $\mu_R$ has product structure, in the sense of definition \ref{measure product strucutre}, with respect to the measurable partitions $V^u_{R} = \left(V^u_{loc}(x)\cap R \right)_{x \in R}$ and $V^s_{R} = \left(V^s_{loc}(x)\cap R \right)_{x \in R}$. Furthermore, there is a constant $C\geq 1$, depending on $R$, such that for $\mu$-a.e. $x, y \in R$, we have that 
$$
C^{-1} < \frac{d\left((\pi_{x,y})_* \mu_{V^u_{R}(x)} \right)}{d\mu_{V^u_R(y)}} < C.
$$
\end{definition}

\begin{remark}
The local product structure without the condition 
$$
C^{-1} < \frac{d\left((\pi_{x,y})_* \mu_{V^u_{R}(x)} \right)}{d\mu_{V^u_R(y)}} < C
$$
allows us to control subsets of measure zero with respect to the conditional measures along the unstable manifold, which turns out to be enough for the proof of ergodicity and the K property. On the other hand, this condition is absolutely essential in the proof of the the Bernoulli property, and in particular, in the proof of the existence of $\epsilon$-regular partitions by rectangles. The condition allows us to control the measure of measurable subsets of small measure and their images under holonomy maps.
\end{remark}
It follows from lemma \ref{density lemma} that for $\mu_R$-a.e. point $p \in R$, we have a product measure, which we denote by $\mu_{p, R}^{\mathcal{P}}$, and that these measure are equivalent to the measure $\mu_R$, with densities satisfying
$$
\rho_{p,R}^{\mathcal{P}}(x) := \frac{d\mu_R}{d\mu_{p, R}^{\mathcal{P}}}(x) = \frac{d((\pi^s_{x,p})_* \mu_{V^u_R(x)})}{\mu_{V^u_R(p)}}(\pi_{x,p}(x)), 
$$
where $\mu_{V^u_R(x)}$ is the conditional measure on $V^u_R(x)$ from Rokhlin disintegration theorem, and $\pi^s_{x, y}: V^u_{R}(x) \longrightarrow V^u_{R}(y)$ is the stable holonomy map between the unstable manifolds of $x$ and $y$ restricted to the rectangle $R$. We also define the other density
$$
J_{p,R}^{\mathcal{P}}(x) := \frac{d\mu_{p, R}^{\mathcal{P}}}{d\mu_R}(x).
$$

\end{section}

\section{ Partitions by Rectangles}

\begin{definition} \label{epsilon rugualr covering}
An $\epsilon$-covering by rectangles of the manifold $M$ with respect to a hyperbolic measure $\mu$ is a finite collection of disjoint rectangles of positive $\mu$ measure,  $\mathcal{R}_\epsilon$ such that 
\begin{enumerate}
    \item $\mu(\bigcup_{R \in \mathcal{R}_\epsilon}R) \geq  1 - \epsilon$; 
    \item For every $R \in \mathcal{R}_\epsilon$, there is a subset $G \subset R$, such that $\mu(G) > (1-\epsilon) \mu(R)$, and a point $p \in R$, such that for any $x \in G$, we have 
    $$
    \left | \frac{d\mu_{R}}{d\mu^{\mathcal{P}}_{p,R}}(x) -1 \right | < \epsilon,
    $$
    where $\mu^{\mathcal{P}}_{p,R}$ is the  product measure at the point $p$ from the local product structure.
\end{enumerate}
\end{definition}

The main goal of this section is to prove the following proposition 

\begin{prop}\label{section four main}
Let $f : M \longrightarrow M$ be a $C^{1+\alpha}$ diffeomorphism of a Riemannian manifold $M$. Let $\mu$ be an invariant ergodic hyperbolic measure, admitting a local product structure. Then for any $\epsilon>0$, one can construct an $\epsilon$-covering by rectangles $\mathcal{R}$. Furthermore, we can chose the maximum radius of such rectangles to be as small as we want, independently from the choice of $\epsilon$. 
\end{prop}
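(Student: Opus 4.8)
The plan is to reduce the construction to a single Pesin block, produce the rectangles there by a Vitali-type covering, and verify the density estimate in clause (2) of Definition \ref{epsilon rugualr covering} via a Lusin argument (this last being the heart of the matter, and the content of Lemma \ref{Lusin based theorem}). Since $\mu$ is ergodic, the Lyapunov exponents, and in particular the stable dimension $j$, are a.e.\ constant, so one may fix $\lambda<1<\nu$, $j$, and choose $\ell$ with $\mu(\Lambda^{\ell})>1-\epsilon/4$. On $\Lambda^{\ell}$ the local manifolds $V^{s}_{loc},V^{u}_{loc}$ have uniformly bounded size and vary continuously, and the stable and unstable holonomies between nearby leaves are defined, continuous, and converge to the identity as the leaves coalesce. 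It then suffices, for a prescribed $r>0$, to cover $\Lambda^{\ell}$ up to a set of $\mu$-measure $<\epsilon/4$ by finitely many pairwise disjoint rectangles of positive measure and radius $<r$, each satisfying clause (2).

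The core step is showing that a small rectangle based at a typical point is good. Two observations make $\rho^{\mathcal P}_{p,R}$ computable. First, by Lemma \ref{composition trick} and the displayed formula for $\rho^{\mathcal P}_{p,R}$, this density depends on $p$ only through the unstable leaf $V^{u}_{R}(p)$ and equals $1$ identically along that leaf. Second, $\mu_{R}$ and $\mu^{\mathcal P}_{p,R}$ carry the same transverse (factor) measure on the space of unstable leaves, while the conditionals of $\mu^{\mathcal P}_{p,R}$ are the stable-holonomy images of the single conditional $\mu_{V^{u}_{R}(p)}$; hence on the leaf through $x$ one has $\rho^{\mathcal P}_{p,R}(x)=\dfrac{d\mu_{V^{u}_{R}(x)}}{d\bigl((\pi^{s})_{*}\mu_{V^{u}_{R}(p)}\bigr)}(x)$. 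After localizing to a finite measurable partition of $\Lambda^{\ell}$ into pieces of diameter $<r/2$ and fixing, in each, a reference local unstable leaf together with a continuous system of holonomies to it, let $m_{L}$ denote the holonomy-transported conditional of $\mu$ on a local unstable leaf $L$. Pushing everything to the reference leaf, a short computation shows that for $x$ on a leaf $L'$ through a sub-rectangle $R$, the total variation $\|\mu_{V^{u}_{R}(x)}-(\pi^{s})_{*}\mu_{V^{u}_{R}(p)}\|$ equals the total variation between the normalised restrictions to $V^{u}_{R}(p)$ of $m_{L'}$ and of $m_{L}$. Now $L\mapsto m_{L}$ is a measurable map into a bounded (by the constant $C$ of Definition \ref{local product structure}) subset of $L^{1}$ of the reference leaf, so Lusin's theorem yields a compact set $\mathcal K$ of leaves of almost full transverse measure on which $L\mapsto m_{L}$ is uniformly continuous. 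Choosing $R$ small, based at a density point of $\mathcal K$ in the leaf space, and such that $\partial\bigl(V^{u}_{R}(p)\bigr)$ is $m_{L}$-null (a generic choice of radius), all leaves of $\mathcal K$ meeting $R$ have $m$ within total variation $\delta$ of $m_{L}$ while the remaining leaves meeting $R$ have small transverse measure; combining this with the elementary fact that two mutually absolutely continuous probability measures at total-variation distance $<\delta$ have Radon–Nikodym derivative within $\epsilon$ of $1$ off a set of measure $<4\delta/\epsilon$, and integrating over leaves, gives $\mu_{R}\{x:|\rho^{\mathcal P}_{p,R}(x)-1|\ge\epsilon\}<\epsilon$ once $\delta<\epsilon^{2}/8$. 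The bounded-density hypothesis is used precisely to keep small-measure sets and their holonomy images simultaneously under control here.

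It remains to assemble the covering. The small good rectangles of the previous step — centred at density points of $\mathcal K$ lying on leaves of $\mathcal K$, of all sufficiently small radii $<r$ outside the null set of bad radii — form a Vitali-type class for $\mu|_{\Lambda^{\ell}}$: taking bracket-hulls of $B(z,r')\cap\Lambda^{\ell}$ under the operation $V^{s}_{loc}(\cdot)\cap V^{u}_{loc}(\cdot)$ and disjointifying rectangles along their stable/unstable boundaries by the standard procedure (cf.\ \cite{BP}), one extracts finitely many pairwise disjoint good rectangles of radius $<r$ covering all but $<\epsilon/4$ of $\mu|_{\Lambda^{\ell}}$; adding these over the finitely many localizing pieces gives $\mathcal R$ with $\mu(\bigcup_{R\in\mathcal R}R)\ge 1-\epsilon$. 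Since $r>0$ was arbitrary and chosen independently of $\epsilon$, the final assertion follows.

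The genuine difficulty is the core step: because conditional measures on unstable leaves are only measurable objects, the work goes into expressing the deviation of $\rho^{\mathcal P}_{p,R}$ from $1$ as the variation of the holonomy-normalised conditionals $m_{L}$, verifying that $L\mapsto m_{L}$ is measurable into $L^{1}$ of the reference leaf (where the uniform bound $C$ is needed), controlling the passage to a sub-rectangle (which forces the null-boundary choice of radii), and keeping every estimate uniform across the finitely many rectangles so that the measures add up to at least $1-\epsilon$. The reduction in the first step and the covering/disjointification in the last step are routine in this setting.
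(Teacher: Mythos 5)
Your overall architecture is sound and close in spirit to the paper's: both arguments rest on Lusin's theorem to compensate for the mere measurability of the holonomy densities, and both reduce to a covering of a Pesin block by disjoint positive-measure rectangles via Lebesgue density points and the standard disjointification (the paper first builds large rectangles and then subdivides them via Lemma \ref{Lusin based theorem}, whereas you build small good rectangles directly and extract a Vitali subcover; that difference is harmless). The formulation of the Lusin step differs: the paper applies Lusin to the scalar density $J^{\mathcal{P}}_{p,R}$ on a fixed rectangle and exploits uniform continuity on a compact set of almost full measure, while you apply it to the leaf-indexed map $L\mapsto m_L$ into $L^1$ of a reference leaf. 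These are morally equivalent.

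There is, however, a concrete gap in your core step. From $\|m_{L'}-m_L\|_{TV}<\delta$ you pass to closeness of the \emph{normalised restrictions} of $m_{L'}$ and $m_L$ to the trace set $T=V^u_R(p)$, and then to $|\rho^{\mathcal P}_{p,R}-1|<\epsilon$ off a set of small $\mu_R$-measure. But the total-variation distance between the normalised restrictions is only bounded by roughly $2\delta/m_L(T)$, and likewise the deviation of the ratio of normalising constants $m_{L'}(T)/m_L(T)$ from $1$ is only bounded by $\delta/m_L(T)$. The quantity $m_L(T)$ is the conditional measure of the unstable trace of $R$, which tends to $0$ as $R$ shrinks; yet $R$ must be shrunk \emph{after} $\delta$ is fixed, in order that uniform continuity on $\mathcal K$ and the density-point condition apply to all leaves through $R$. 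So the choice ``$\delta<\epsilon^2/8$'' cannot close the estimate: the required $\delta$ depends on $m_L(V^u_R(p))$, which depends on $R$, which depends on $\delta$. This is exactly the difficulty the paper's Steps 3--4 of Lemma \ref{Lusin based theorem} are devoted to: one first selects, via two applications of Lemma \ref{measure of bad atoms}, sub-rectangles and then unstable leaves inside them on which the Lusin set has large measure \emph{relative to the sub-leaf}, and then controls the ratio of normalising constants $\mu_{V^u_R(x)}(V^u_{R_i}(x))/\mu_{V^u_R(p_i)}(V^u_{R_i}(p_i))$ by integrating the pointwise density bounds over the sub-leaf (using the uniform bounds $C^{\pm2}$ on the exceptional set). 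Your argument needs an analogous relative-measure selection, or alternatively a choice of $R$ that fixes the unstable extent $T$ first (so that $m_L(T)$ is determined before $\delta$) and shrinks $R$ only in the stable direction afterwards; as written, neither is present.
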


\begin{remark}
The definition of $\epsilon$-coverings (which was first introduced in \cite{CH}) and which we have defined in the beginning of this section, is of central importance to the proof of the Bernoulli property. In particular, it allows us to bypass the possibility that the Jacobian of the stable holonomy maps might not be continuous at all, which is not the case, for example, for SRB measures. It turns out we really do not need any continuity assumption at all. The basic idea, following the proof in \cite{CH}, is that by Lusin's theorem,  a measurable function is almost continuous, so even though the Jacobian of the holonomy maps may not be continuous on a rectangle, they are continuous on compact subsets of arbitrary large $\mu$ measure. 
\end{remark}

The following lemmas are classic, and can be found for example in \cite{BP}[Lemma 9.5.5 and Lemma 9.5.6]
\begin{lemma}\label{rectangle at lebesgue}
Let $\mu$ be hyperbolic measure with local product structure. Let $\Lambda \subset M$ be a hyperbolic set for $f$, and $\ell>0$ be such that $\mu(\Lambda^\ell) > 0$. Then for any Lebesgue density point $w \in \Lambda^\ell$, for the restriction $\mu$ to $\Lambda ^\ell$, there is rectangle $R$ at $w$ so that $\mu(R) > 0$.
\end{lemma}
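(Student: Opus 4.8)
The plan is to build a hyperbolic rectangle at $w$ directly out of the set $\Lambda^\ell\cap B(w,r)$: project this set onto $V^u_{loc}(w)$ and onto $V^s_{loc}(w)$ using the stable and unstable holonomies, take the ``product'' of the two projections, and then observe that positivity of the $\mu$-measure of the result comes for free because a density point already carries positive $\mu$-mass in every small ball. Throughout, write $[x,y]:=V^s_{loc}(x)\cap V^u_{loc}(y)$ whenever that intersection is a single point. First I would recall the basic structure of the Pesin block $\Lambda^\ell$ (see e.g.\ \cite{BP}): the local manifolds $V^s_{loc}(x),V^u_{loc}(x)$, $x\in\Lambda^\ell$, have size bounded below by some $\delta_\ell>0$, depend continuously on $x$, and meet transversally with uniform angle bounds. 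Consequently one can fix $r>0$, small in terms of $\ell$, so that for all $x,y\in\Lambda^\ell$ with $d(x,w)<r$ and $d(y,w)<r$ the bracket $[x,y]$ is a single point lying in a fixed small neighbourhood of $w$ and --- by the standard block-absorption estimates --- is Lyapunov regular with exponents and constants controlled only by $\ell$ and the hyperbolicity rates, so that $[x,y]$ lies in $\Lambda^{\ell'}$ for some $\ell'\ge\ell$ depending only on $\ell$; shrinking $r$ further, we may also assume $\mu(\Lambda^\ell\cap B(w,r))>0$, which is legitimate precisely because $w$ is a density point of $\Lambda^\ell$.

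Next I would set $P^u:=\{[x,w]:x\in\Lambda^\ell\cap B(w,r)\}\subseteq V^u_{loc}(w)$ and $P^s:=\{[w,x]:x\in\Lambda^\ell\cap B(w,r)\}\subseteq V^s_{loc}(w)$ --- the holonomic projections of $\Lambda^\ell\cap B(w,r)$ onto the unstable leaf, resp.\ the stable leaf, through $w$ --- and define $R:=\{[y,z]:y\in P^u,\ z\in P^s\}$. By the previous paragraph every bracket occurring here is a well-defined single point, $R$ is contained in a single Pesin block, and $w=[w,w]\in R$. Furthermore $R$ is automatically a hyperbolic rectangle: if $x_1=[y_1,z_1]$ and $x_2=[y_2,z_2]$ belong to $R$ with $y_i\in P^u$, $z_i\in P^s$, then $V^s_{loc}(x_1)=V^s_{loc}(y_1)$ and $V^u_{loc}(x_2)=V^u_{loc}(z_2)$, so $V^s_{loc}(x_1)\cap V^u_{loc}(x_2)=[y_1,z_2]\in R$; in other words, the bracket of a subset of a single unstable leaf with a subset of a single stable leaf is always a rectangle.

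To conclude I would prove the inclusion $\Lambda^\ell\cap B(w,r)\subseteq R$, which immediately gives $\mu(R)>0$. Fix $x\in\Lambda^\ell\cap B(w,r)$. Since $[x,w]\in V^s_{loc}(x)$ we have $V^s_{loc}([x,w])=V^s_{loc}(x)$, and since $[w,x]\in V^u_{loc}(x)$ we have $V^u_{loc}([w,x])=V^u_{loc}(x)$; hence
$$[[x,w],[w,x]]=V^s_{loc}([x,w])\cap V^u_{loc}([w,x])=V^s_{loc}(x)\cap V^u_{loc}(x)=\{x\},$$
so $x=[[x,w],[w,x]]\in R$ because $[x,w]\in P^u$ and $[w,x]\in P^s$. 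Therefore $\mu(R)\ge\mu(\Lambda^\ell\cap B(w,r))>0$ by the choice of $r$. I note that the local product structure of $\mu$ is not used anywhere in this argument, which is consistent with the fact that Definition \ref{local product structure} only posits product structure on rectangles already known to have positive measure, so there is no circularity.

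The hard part --- indeed the only part requiring genuine care --- will be the bookkeeping collected in the first paragraph: showing, from the definition of $\Lambda^\ell$, that for $r$ small every bracket $[x,w]$, $[w,x]$ and then $[y,z]$ with $y\in P^u$, $z\in P^s$ is a well-defined single point, that $P^u$ and $P^s$ consist of Lyapunov-regular points, and that $R$ sits inside a single (in general strictly larger) Pesin block $\Lambda^{\ell'}$. This is the routine but slightly delicate part of nonuniform hyperbolicity theory; it is exactly what the classical results cited above, namely \cite{BP}[Lemmas 9.5.5 and 9.5.6], establish, and I would simply follow those proofs for this step. Everything else is elementary.
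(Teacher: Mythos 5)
Your construction is correct and is essentially the argument the paper relies on: the paper gives no proof of this lemma, deferring to \cite{BP}[Lemmas 9.5.5 and 9.5.6], and your bracket-of-projections construction $R=\{[y,z]:y\in P^u,\ z\in P^s\}$ together with the inclusion $\Lambda^\ell\cap B(w,r)\subseteq R$ is precisely that classical proof. The only point deserving care, which you correctly flag and defer to \cite{BP}, is that bracket points need not themselves be Lyapunov regular, so the local manifolds on $R$ must be taken as those inherited from the defining points of $\Lambda^\ell\cap B(w,r)$ (equivalently, $R$ sits in a suitable enlarged block $\Lambda^{\ell'}$).
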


\begin{lemma} \label{refining rectangles}
Let $R$ and $R'$ be two rectangles in $\Lambda$. Then, there are five disjoint rectangles $R_1, \dots, R_5$, such that 
$$
R \cup R' = \bigcup_{i=1}^5 R_i.
$$
\end{lemma}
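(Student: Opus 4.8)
The plan is to work in the product (bracket) coordinates that every hyperbolic rectangle carries, in which a subset is a sub-rectangle precisely when it is a ``combinatorial rectangle'' $A\times B$, and then to invoke the elementary fact that the set difference of one combinatorial rectangle inside another is a disjoint union of two combinatorial rectangles.

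First I would dispose of the degenerate case: if $R\cap R'=\emptyset$, set $R_1=R$, $R_2=R'$ and let the remaining pieces be empty (empty sets are vacuously rectangles, and in our applications null pieces are discarded anyway). So assume $R\cap R'\neq\emptyset$. As always in our applications, $R$ and $R'$ lie in a common Pesin block $\Lambda^\ell$ (the blocks are nested) and are small enough that the bracket $[y,z]:=V^s_{\mathrm{loc}}(y)\cap V^u_{\mathrm{loc}}(z)$ is a single point for all $y,z\in R\cup R'$. The point $[y,z]$ depends only on the pair $(y,z)$, not on any ambient rectangle; hence for $y,z\in R\cap R'$ the defining property of $R$ gives $[y,z]\in R$ and that of $R'$ gives $[y,z]\in R'$, so $R\cap R'$ is itself a rectangle.

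Next I would set up coordinates. Fix $x\in R\cap R'$ and put $\sigma(x)=V^s_{\mathrm{loc}}(x)\cap R$ and $\upsilon(x)=V^u_{\mathrm{loc}}(x)\cap R$. The assignment $y\mapsto (s_y,u_y):=([x,y],[y,x])$ is a bijection of $R$ onto $\sigma(x)\times\upsilon(x)$ with inverse $(s,u)\mapsto[u,s]$, and a direct computation using $V^s_{\mathrm{loc}}([a,b])=V^s_{\mathrm{loc}}(a)$ and $V^u_{\mathrm{loc}}([a,b])=V^u_{\mathrm{loc}}(b)$ shows that in these coordinates the bracket becomes the ``corner'' map $[(s_y,u_y),(s_z,u_z)]=(s_z,u_y)$. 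From this one reads off that a subset $Q\subset R$ is a rectangle if and only if $Q=A\times B$ for some $A\subset\sigma(x)$, $B\subset\upsilon(x)$: closure of $Q$ under the corner map forces $\pi_1(Q)\times\pi_2(Q)\subset Q$ while the reverse inclusion is trivial, and conversely any $A\times B\subset\sigma(x)\times\upsilon(x)$ is closed under the corner map and contained in $R\subset\Lambda^\ell$, hence is a rectangle in the sense of Definition~\ref{Rectangle}. Applying this to $Q=R\cap R'$ and identifying the two projections gives $R\cap R'=A\times B$ with $A=\sigma(x)\cap R'$ and $B=\upsilon(x)\cap R'$.

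Finally I would carry out the bookkeeping. In the coordinates of $R$,
\[
R\setminus R'=\bigl((\sigma(x)\setminus A)\times\upsilon(x)\bigr)\ \sqcup\ \bigl(A\times(\upsilon(x)\setminus B)\bigr),
\]
a disjoint union of two combinatorial rectangles, hence of two disjoint rectangles $R_1,R_2\subset R$. Running the same argument with the coordinates of $R'$ based at the same point $x$ expresses $R'\setminus R$ as a disjoint union of two rectangles $R_3,R_4\subset R'$. Setting $R_5:=R\cap R'$, the sets $R_1,\dots,R_5$ are pairwise disjoint — the three blocks $R\setminus R'$, $R'\setminus R$, $R\cap R'$ are mutually disjoint, and the two pieces inside each of the first two are disjoint by construction — and $R_1\cup\dots\cup R_5=(R\setminus R')\sqcup(R'\setminus R)\sqcup(R\cap R')=R\cup R'$, which is the assertion. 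The only step carrying real content is the coordinate computation identifying the bracket with the corner map, and, with it, the characterization of sub-rectangles as combinatorial rectangles; I expect this to be the main (and comparatively mild) obstacle, after which the argument is purely set-theoretic.
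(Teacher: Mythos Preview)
Your argument is correct and is exactly the standard proof: pass to bracket coordinates, identify sub-rectangles with combinatorial rectangles, and split each of $R\setminus R'$ and $R'\setminus R$ into two pieces plus the intersection as the fifth. The paper does not give its own proof of this lemma; it simply records it as classical and refers to \cite{BP}[Lemma~9.5.5 and Lemma~9.5.6], whose argument is the one you wrote down.
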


\begin{lemma}\label{sub-rectangles lemma}
Any rectangle $R$ at a point $w \in \Lambda^\ell$ of positive $\mu$ measure can be partitioned into finitely many rectangles of arbitrarily small radius.
\end{lemma}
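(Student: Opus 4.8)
The plan is to transport the problem onto the product coordinates that the rectangle carries and to subdivide each coordinate factor into finitely many small pieces. Since $R$ is a hyperbolic rectangle it is, in particular, a set with product structure (Definition \ref{Rectangle}) for the transversal continuous families $\bigl(V^u_{loc}(x)\bigr)_{x\in R}$ and $\bigl(V^s_{loc}(x)\bigr)_{x\in R}$, so by one of the structural lemmas of the previous section the map
\[
\pi_w : V^u_R(w) \times V^s_R(w) \longrightarrow R, \qquad \pi_w(a,b) := V^s_{loc}(a) \cap V^u_{loc}(b),
\]
is a homeomorphism; equivalently, every $q \in R$ has well-defined coordinates $\alpha(q) := V^s_{loc}(q)\cap V^u_R(w)$ and $\beta(q) := V^u_{loc}(q)\cap V^s_R(w)$. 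Now $V^u_R(w)$ is a bounded Borel subset of the smooth embedded disk $V^u_{loc}(w)$ (the local unstable manifolds of points of $\Lambda^\ell$ have uniformly bounded size), so for any $\delta>0$ it can be written as a finite disjoint union $V^u_R(w)=\bigsqcup_{i=1}^{k}A_i$ of Borel sets of diameter $<\delta$; likewise $V^s_R(w)=\bigsqcup_{j=1}^{l}B_j$ with $\operatorname{diam} B_j<\delta$. Putting $S_{ij}:=\pi_w(A_i\times B_j)$ then yields, since $\pi_w$ is a bijection, a finite partition $R=\bigsqcup_{i,j}S_{ij}$.

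Two things have to be checked. First, each $S_{ij}$ is again a hyperbolic rectangle: if $q_1,q_2\in S_{ij}$, then $q_3:=V^s_{loc}(q_1)\cap V^u_{loc}(q_2)$ has $\alpha(q_3)=\alpha(q_1)\in A_i$ and $\beta(q_3)=\beta(q_2)\in B_j$, while $q_3\in R$ because $R$ is a rectangle, so $q_3\in S_{ij}$; and $S_{ij}\subset R\subset\Lambda^\ell$. Second, the radius of $S_{ij}$ is small: for $q\in S_{ij}$ its unstable and stable fibres are holonomic images of $A_i$ and $B_j$, namely $V^u_{loc}(q)\cap S_{ij}=\pi^s_{w,q}(A_i)$ and $V^s_{loc}(q)\cap S_{ij}$ equals the image of $B_j$ under the corresponding unstable holonomy. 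Since the families $\bigl(V^u_{loc}(x)\bigr)_{x\in\Lambda^\ell}$ and $\bigl(V^s_{loc}(x)\bigr)_{x\in\Lambda^\ell}$ are continuous and $R$ may be taken compact, the holonomy maps and $\pi_w$ itself are uniformly continuous on $R$ and share a common modulus of continuity $\omega$; hence $\operatorname{diam}_M S_{ij}\le\omega(2\delta)$ and the internal fibres of $S_{ij}$ have size $\le\omega(\delta)$. Choosing $\delta$ small enough makes the radius of every $S_{ij}$ less than any prescribed bound, independently of how $R$ was produced.

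If one additionally wants the pieces of the partition to be rectangles in the sense used later in the paper (positive $\mu$-measure), one simply discards the $\mu$-null $S_{ij}$: the remaining ones still partition $R$ modulo a $\mu$-null set, and each of them, being a hyperbolic rectangle of positive measure, automatically carries a local product structure by Definition \ref{local product structure}. The only genuinely delicate point is the quantitative step above, namely passing from smallness in the product coordinates to smallness of the radius in $M$ --- that is, the \emph{uniform} modulus-of-continuity estimate for $\pi_w$ and for the stable and unstable holonomies restricted to $R$. This rests on the continuous dependence of the local invariant manifolds on the base point within a Pesin block together with the compactness of $R$; everything else is bookkeeping in the product coordinates.
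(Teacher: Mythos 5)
The paper does not prove this lemma itself; it cites the classical references (\cite{BP}, Lemmas 9.5.5--9.5.6), and your argument is exactly the standard one given there: pass to product coordinates via $\pi_w$, chop each factor into finitely many small Borel pieces, verify the product pieces are again rectangles, and use uniform continuity of $[\cdot,\cdot]$ and of the holonomies on the compact Pesin block to convert smallness of the pieces into smallness of the radius. Your proof is correct and takes essentially the same approach as the cited source.
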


The following lemma is the heart of this section. It allows us to partition any hyperbolic rectangle of positive $\mu$-measure into smaller rectangles, such that on most of these rectangles, the restriction of $\mu$ is almost a product measure. This lemma is used in the proof of theorem \ref{section four main}. 
\begin{lemma}\label{Lusin based theorem}
Let $\mu$ be a hyperbolic measure admitting a local product structure. Let $R$ be a hyperbolic rectangle, then for any $\epsilon>0$, there is a an $\epsilon$-covering of $R$ by sub-rectangles. Furthermore, the maximum radius of each of these sub-rectangles can be taken to be as small as we want, independently from the choice of $\epsilon$.   
\end{lemma}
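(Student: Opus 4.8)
The plan is to follow the strategy of \cite{CH}: produce a single measurable density, make it continuous on a large compact set by Lusin's theorem, and then cut $R$ into rectangles so small that this density is essentially constant on each piece. First I would fix, via Lemma~\ref{density lemma}, a base point $p\in R$ in the full-measure set where $\mu_R\sim\mu_{p,R}^{\mathcal P}$, and set $\rho:=\rho_{p,R}^{\mathcal P}=d\mu_R/d\mu_{p,R}^{\mathcal P}$; the formula for $\rho_{p,R}^{\mathcal P}$ recorded after Definition~\ref{local product structure}, together with the two-sided bound in that definition, gives $C^{-1}\le\rho\le C$ $\mu_R$-a.e. Given $\epsilon>0$, choose auxiliary parameters $\delta,\eta>0$ (small relative to $\epsilon$ and $C$, to be pinned down at the very end) and apply Lusin's theorem to get a compact $K\subset R$ with $\mu_R(K)>1-\delta$ on which $\rho$ is continuous, hence uniformly continuous: there is $r_0>0$ with $|\rho(x)-\rho(y)|<\eta$ whenever $x,y\in K$ and $d(x,y)<r_0$. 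Then, using Lemma~\ref{sub-rectangles lemma}, partition $R$ into rectangles $R_1,\dots,R_N$ of radius less than $\min\{r_0,r^\ast\}$, where $r^\ast>0$ is the prescribed radius bound, discarding pieces of zero $\mu$-measure. This already secures the last sentence of the statement, since we only ever shrink the radius, so $r^\ast$ may be taken independently of $\epsilon$.

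Next comes the combinatorics for item (1) of the (relative version of the) $\epsilon$-covering. Call $R_i$ \emph{good} if $\mu(R_i\cap K)\ge(1-\epsilon)\mu(R_i)$. By Markov's inequality applied to $\mu(R\setminus K)<\delta\,\mu(R)$, one has $\mu\bigl(\bigcup_{R_i\text{ not good}}R_i\bigr)\le(\delta/\epsilon)\mu(R)$, which is $<\epsilon\,\mu(R)$ once $\delta<\epsilon^2$; hence the good rectangles cover at least $(1-\epsilon)\mu(R)$. For a good $R_i$ set $G_i:=R_i\cap K$, so $\mu(G_i)>(1-\epsilon)\mu(R_i)$, and the whole lemma reduces to exhibiting, for each good $R_i$, a point $p_i\in R_i$ with $\bigl|\,d\mu_{R_i}/d\mu_{p_i,R_i}^{\mathcal P}-1\,\bigr|<\epsilon$ on $G_i$.

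This estimate is the heart of the matter and the step I expect to be the main obstacle; I would split it into two comparisons. First, compare $\mu_{R_i}$ with the \emph{ambient} product measure restricted to $R_i$: because $R_i$ is a rectangle, $\pi_p^{-1}(R_i)$ is a product set $A_i\times B_i\subset V^u_R(p)\times V^s_R(p)$, so $\nu_i:=(\mu_{p,R}^{\mathcal P})_{R_i}$ is itself a product measure on $R_i$, and $d\mu_{R_i}/d\nu_i=\bigl(\mu_{p,R}^{\mathcal P}(R_i)/\mu_R(R_i)\bigr)\rho$; since $\rho$ lies within $\eta$ of a constant on $G_i$ and (using $C^{-1}\le\rho\le C$) $\mu_{p,R}^{\mathcal P}(R_i\setminus K)$ is a small multiple of $\mu_{p,R}^{\mathcal P}(R_i)$, this derivative is within $O(\eta)+O(\epsilon)$ of $1$ on $G_i$. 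Second, compare $\nu_i$ with the intrinsic product measure $\mu_{p_i,R_i}^{\mathcal P}$: in the coordinates $V^u_{R_i}(p_i)\times V^s_{R_i}(p_i)$ both are products, so $d\nu_i/d\mu_{p_i,R_i}^{\mathcal P}$ factors as a function of the unstable coordinate times a function of the stable coordinate; by Lemma~\ref{composition trick} and the cocycle identity $\pi_{y,p}\circ\pi_{x,y}=\pi_{x,p}$, the unstable factor is, up to a normalizing constant, $1/\rho$ along the leaf $V^u_{R_i}(p_i)$, while the stable factor is, up to a constant, the reciprocal of the $\mu_{V^u_R(\cdot)}$-masses of the sub-leaves $V^u_{R_i}(\cdot)$, which vary by at most a factor $C^2$ over the leaves meeting $R_i$ by the two-sided bound. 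To turn this into genuine closeness to $1$ on $G_i$, I would choose $p_i$ carefully: by a Fubini/Rokhlin argument, since $\mu(R_i\setminus K)\le\epsilon\,\mu(R_i)$, a set of unstable leaves of $R_i$ of nearly full factor measure has its intersection with $R_i\setminus K$ of conditional mass below $\sqrt{\epsilon}$; picking $p_i\in G_i$ on such a leaf, the smallness of $R_i$ and uniform continuity of $\rho$ on $K$ force $\rho$ to be within $O(\eta)+O(\sqrt{\epsilon})$ of a constant along $V^u_{R_i}(p_i)$, and after absorbing the normalizing constants the full derivative $d\mu_{R_i}/d\mu_{p_i,R_i}^{\mathcal P}$ is within $\epsilon$ of $1$ on $G_i$ once $\eta$ and $\delta$ are chosen small enough. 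The genuinely delicate points are the product structure of $\nu_i$ in the $p_i$-coordinates and the identification of the two factors of $d\nu_i/d\mu_{p_i,R_i}^{\mathcal P}$ via Lemma~\ref{composition trick}; everything else is a matter of fixing the auxiliary constants in the right order.
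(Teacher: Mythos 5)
Your proposal follows essentially the same route as the paper's proof: Lusin's theorem applied to the density between $\mu_R$ and $\mu_{p,R}^{\mathcal P}$, a partition into small sub-rectangles most of which meet the Lusin set in nearly full relative measure, a base point $p_i$ chosen on a good unstable leaf, the cocycle identity of Lemma \ref{composition trick} to change base points, and control of the normalizing constants $\mu_{V^u_R(x)}(V^u_{R_i}(x))/\mu_{V^u_R(p_i)}(V^u_{R_i}(p_i))$ via the two-sided holonomy bound and an integration over the leaf. The one point to tighten is that your good set $G_i=R_i\cap K$ must be further intersected with the stable-holonomy preimage of $V^u_{R_i}(p_i)\cap K$, so that both $x$ and $\pi^s_{x,p_i}(x)$ lie in the Lusin set before uniform continuity is invoked; this costs only an additional $O(C^2\sqrt{\epsilon})$ of relative measure by the bound on the holonomy Jacobian, and is exactly how the paper defines its set $E_i$ in Step 3.
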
 

\begin{proof}
\textbf{Step 1. }Fix a point $p\in R$, such that $\mu_{p, R}^{\mathcal{P}}$ is well-defined and equivalent to the measure $\mu_R$. Now, using Lusin theorem with the measurable function $J_{p, R}^{\mathcal{P}}$, we see that for any $\epsilon_1 > 0$, we can find a compact subset $E_{\epsilon_1} \subset R$, for which we have the following two properties:
\begin{enumerate}
    \item $\mu_R(E_{\epsilon_1}) \geq 1-\epsilon_1$,
    \item the restriction of function $J_{p, R}^{\mathcal{P}}$ to $E_{\epsilon_1}$ is continuous. 
\end{enumerate}
Now, since $E_{\epsilon_1}$ is compact, we see that for any $\epsilon > 0$, we can find $\delta > 0$, for which we have 
$$
d(x_1, x_2) < \delta \Longrightarrow \left |J_{p, R}^{\mathcal{P}}(x_1) - J_{p, R}^{\mathcal{P}}(x_2) \right| < \frac{\epsilon}{C},
$$
for all $x_1, x_2 \in E_{\epsilon_1}$. Since $C^{-1} < J_{p, R}^{\mathcal{P}}(x) < C$ for $\mu_R$-a.e. $x \in R$, we see that 

$$
d(x_1, x_2) < \delta \Longrightarrow \left |\frac{J_{p, R}^{\mathcal{P}}(x_1)}{J_{p, R}^{\mathcal{P}}(x_2)} - 1 \right| < \epsilon,
$$
for all $x_1, x_2 \in E_{\epsilon_1}$. \\ 

\textbf{Step 2. } Next, we partition $R$ into a $N$ sub-rectangles $R_1, R_2, \dots, R_N$ using lemma \ref{sub-rectangles lemma}, and we do that to have 
$$
\max_{1\leq i \leq N} diam(R_i) < \delta.
$$
Now, using lemma \ref{measure of bad atoms}, we see that for $\mu_R$-$\epsilon_1^{\frac12}$-a.e. sub-rectangle $R_i$ in this partition, we have $\mu_{R_i}\left(E_{\epsilon_1} \right) \geq 1 - \epsilon_1^{\frac12}$. These are going to be the sub-rectangles we use to obtain our $\epsilon$-covering. We call such sub-rectangles \textit{good}. Before we move on to the next step in the construction, let us set $\epsilon_1 = \frac{\epsilon_2^2}{C^4}$, so any good sub-rectangle satisfies $\mu_{R_i}\left(E_{\epsilon_1} \right) \geq 1 - \frac{\epsilon_2}{C^2}$, and so that we have $\mu_R$-$\frac{\epsilon_2}{C^2}$-such good sub-rectangles. \\

\textbf{Step 3. } Now, let us a fix a \textit{good} sub-rectangle $R_i$. Since we have $\mu_{R_i}(E_{\epsilon_1}) \geq 1 - \frac{\epsilon_2}{C^2}$, we can use lemma \ref{measure of bad atoms} with the partition of $R_i$ by the unstable manifolds $\left( V^u_{R_i}(x) \right)_{x \in R_i}$. So, we see that for $(\mu_{R_i}$-$\frac{\epsilon_2^{\frac12}}{C^2})$-a.e. point $x \in R_i$, we have that $\mu_{V^u_{R_i}(x)}\left(E_{\epsilon_1} \right) \geq 1 - \frac{\epsilon_2^{\frac12}}{C^2}$. We call such an unstable manifold in $R_i$ a \textit{good} unstable manifold. Now, let us fix a point $p_i \in R_i$, whose unstable manifold is a \textit{good} unstable. Then, for any point $x \in R_i$, whose unstable manifold in $R_i$, $V_{R_i}^{u}(x)$ is \textit{good}, consider the set 
$$
\pi^s_{p_i, x}\left(V^u_{R_i}(p_i) \cap E_{\epsilon_1}^{c} \right) \subset V^u_{R_i}(x).
$$
Notice that we have 
$$
\mu_{V^u_{R_i}(x)}(\pi^s_{p_i, x}\left(V^u_{R_i}(p_i) \cap E_{\epsilon_1}^{c} \right)) < C^2 \mu_{V^u_{R_i}(p_i)}(E^c_{\epsilon_1}) < C^2 \frac{\epsilon_2^{\frac12}}{C^2} = \epsilon_2^{\frac12}.
$$
The factor $C^2$ appears in the first inequality because we have the following relation between the densities $\frac{d \left( (\pi_{x_, p_i}^s)_*\mu_{V^u_{R_i}(x)} \right)}{d\mu_{V^u_{R_i}(p_i)}}$ (the density between the conditional measures along unstables manifold \textit{restricted} to the sub-rectabgle $R_i$)  and $\frac{d \left( (\pi_{x_, p_i}^s)_*\mu_{V^u_{R}(x)} \right)}{d\mu_{V^u_{R}(p_i)}}$ (the density between the conditional measures along the the whole unstable manifold in the original rectangle $R$.), 

$$
\frac{d \left( (\pi_{x_, p_i}^s)_*\mu_{V^u_{R_i}(x)} \right)}{d\mu_{V^u_{R_i}(p_i)}} = \frac{\mu_{V^u_{R}(p_i)}\left(V^u_{R_i}(p_i)\right)}{\mu_{V^u_{R}(x)}\left(V^u_{R_i}(x)\right)}  \frac{d \left( (\pi_{x_, p_i}^s)_*\mu_{V^u_{R}(x)} \right)}{d\mu_{V^u_{R}(p_i)}}.
$$
Now, we have from our assumption that the Jacobian of the holonomy maps on $R$ satisfies 
$$
C^{-1} < \frac{d \left( (\pi_{x_, p_i}^s)_*\mu_{V^u_{R}(x)} \right)}{d\mu_{V^u_{R}(p_i)}} < C,
$$
which implies that 

$$
C^{-1} < \frac{\mu_{V^u_{R}(p_i)}\left(V^u_{R_i}(p_i)\right)}{\mu_{V^u_{R}(x)}\left(V^u_{R_i}(x)\right)}  < C,
$$
leading to the bounds 
$$
C^{-2} < \frac{d \left( (\pi_{x_, p_i}^s)_*\mu_{V^u_{R_i}(x)} \right)}{d\mu_{V^u_{R_i}(p_i)}} < C^2.
$$
Therefore, we see that for each point $x \in R_i$, whose unstable manifold $V^u_{R_i}(x)$ is \textit{good}, we have 
$$
\mu_{V^u_{R_i}(x)}(\pi_{p_i,x}\left(V^u_{R_i}(p_i) \cap E_{\epsilon_1} \right) \cap E_{\epsilon_1}) > (1-\epsilon_2^{\frac{1}{2}}) + (1 - \frac{\epsilon_2^{\frac12}}{C^2}) - 1 > 1 - 2 \epsilon_2^{\frac12} .
$$

Finally, let us define the measurable set $E_i \subset R_i \cap E_{\epsilon_1}$,  by 
$$
E_i = \bigcup_{y \in G_i}\left[V^u_{R_i}(y) \cap E_{\epsilon_1} \cap \pi_{p_i, x}\left(V_{R_i}^u(p_i) \cap E_{\epsilon_1} \right) \right],
$$
where $G_i \subset R_i$ is the set of points $y \in R_i$ whose unstable manifold $V^u_{R_i}(y)$ is \textit{good}. This set $E_i$ has the following properties:
\begin{enumerate}
    \item $\mu_{R_i}(E_i) \geq (1-2
    \epsilon_2^{\frac12})(1-\frac{\epsilon_2^{\frac{1}{2}}}{C^2})$; 
    
    \item for each $x \in E_i$, we have that both points $x$ and $\pi_{x,p}^s(x)$ are in $E_{\epsilon_1}$.
\end{enumerate}
Now, we see that the second property of $E_i$ implies that $\left | \frac{J_{p, R}^{\mathcal{P}}(x)}{J_{p, R}^{\mathcal{P}}(\pi_{x, p_i}(x))} - 1 \right | < \epsilon$. Now, we can also use lemma \ref{composition trick}, to see that we have 

$$
\left | J_{p_i, R}^{\mathcal{P}}(x) - 1 \right | < \epsilon,
$$
for all $x \in E_i$. This is not exactly what we need, what we actually need to prove is that $\left | J_{p_i, R_i}^{\mathcal{P}}(x) - 1 \ \right | $ is small on $E_i$. We show this in the next step. Before we move on, let us replace $\epsilon_2 = \frac{\epsilon_3^2}{4C^4}$ to improve the readability of  the computations we are going to make in the next step. \\ 

\textbf{Step 4.} First, we have 
$$
J_{p_i, R}^{\mathcal{P}}(x) = \frac{d((\pi_{p_i, x})_*\mu_{V^u_{R}(p_i)})}{d\mu_{V^u_{R}(x)}}(x).  
$$
Therefore, since the conditional measure $\mu_{V^u_{R_i}(x)}$ along the unstable manifold of $x$ in $R_i$ is the conditional measure $\mu_{V^u_{R}(x)}$ conditioned on the set $V^u_{R_i}(x) \subset V^u_{R}(x)$, we see that 
$$
J_{p_i, R_i}^{\mathcal{P}}(x) = \frac{\mu_{V^u_R(x)}(V^u_{R_i}(x))}{\mu_{V^u_R(p_i)}(V^u_{R_i}(p_i))} J_{p_i, R}^{\mathcal{P}}(x).
$$
Hence, we have
$$
\left | J_{p_i, R_i}^{\mathcal{P}}(x) - 1 \ \right | \leq \frac{\mu_{V^u_R(x)}(V^u_{R_i}(x))}{\mu_{V^u_R(p_i)}(V^u_{R_i}(p_i))} \left | J_{p_i, R}^{\mathcal{P}}(x) - 1 \right |
 + \left | \frac{\mu_{V^u_R(x)}(V^u_{R_i}(x))}{\mu_{V^u_R(p_i)}(V^u_{R_i}(p_i))} - 1 \right |.
 $$
 Therefore, to control $\left | J_{p_i, R_i}^{\mathcal{P}}(x) - 1 \ \right |$, we will need to have a good control over $\left | \frac{\mu_{V^u_R(x)}(V^u_{R_i}(x))}{\mu_{V^u_R(p_i)}(V^u_{R_i}(p_i))} - 1 \right |$, so we do that next. Let us take an arbitrary $x \in E_i$, then we have the following 
 
 \begin{align*}
 \mu_{V^u_R(p_i)}(V^u_{R_i}(p_i))  & = \int_{V^u_{R_i}(x)}J_{p_i, R}^{\mathcal{P}}(y) d\mu_{V^u_{R}(x)} \\ 
 & = \int_{V^u_{R_i}(x) \cap E_i}J_{p_i, R}^{\mathcal{P}} (y) d\mu_{V^u_{R}(x)} + \int_{V^u_{R_i}(x) \cap E_i^c}J_{p_i, R}^{\mathcal{P}}(y)  d\mu_{V^u_{R}(x)}.
 \end{align*}
 We first consider the first integral. We have 
 $$
 \int_{V^u_{R_i}(x) \cap E_i}J_{p_i, R}^{\mathcal{P}} (y) d\mu_{V^u_{R}(x)} = (1 \pm \epsilon) \mu_{V^u_{R}(x)}(E_i) = (1 \pm \epsilon)(1-\frac{\epsilon_3}{C^2}) \mu_{V^u_{R}(x)}(V^u_{R_i}(x)).
 $$

On the other hand,
$$
\int_{V^u_{R_i}(x) \cap E_i^c}J_{p_i, R}^{\mathcal{P}}(y)  d\mu_{V^u_{R}(x)} = C^{\pm 2}\mu_{V^u_{R}(x)}(E_i^c) = C^{\pm 2} \frac{\epsilon_3}{C^2}\mu_{V^u_{R}(x)}(V^u_{R_i}(x)).
$$
So putting this all together, we get that 

\begin{align*}
\left((1 - \epsilon)(1-\frac{\epsilon_3}{C^2}) + \frac{\epsilon_3}{C^4} \right)\mu_{V^u_{R}(x)}(V^u_{R_i}(x)) &< 
\mu_{V^u_R(p_i)}(V^u_{R_i}(p_i))\\
&< \left((1 + \epsilon)(1-\frac{\epsilon_3}{C^2}) + \epsilon_3 \right) \mu_{V^u_{R}(x)}(V^u_{R_i}(x)),
\end{align*}
which can be further reduced to 
$$
(1-\epsilon - \epsilon_3) \mu_{V^u_{R}(x)}(V^u_{R_i}(x)) <\mu_{V^u_R(p_i)}(V^u_{R_i}(p_i)) <(1+\epsilon + \epsilon_3) \mu_{V^u_{R}(x)}(V^u_{R_i}(x)).
$$
Hence, 
$$
\left | \frac{\mu_{V^u_R(x)}(V^u_{R_i}(x))}{\mu_{V^u_R(p_i)}(V^u_{R_i}(p_i))} - 1 \right | < 2(\epsilon + \epsilon_3).
$$

This implies
$$
\left | J_{p_i, R_i}^{\mathcal{P}}(x) - 1 \ \right| \leq (1+2(\epsilon + \epsilon_3))\epsilon  + 2(\epsilon + \epsilon_3),
$$
for all $x \in E_i$. Now, we see that the constants $\epsilon$ and $\epsilon_3$ can be chosen to be arbitrarily small, hence we see that we can have as much control on $\left | J_{p_i, R_i}^{\mathcal{P}}(x) - 1 \ \right|$ as we want. We also see that the total measure of \textit{good} sub-rectangles $R_i$ can be taken to be as large as we want, and that $E_i$ for each rectangle has a large measure with respect to $\mu_{R_i}$. This  finishes the proof.
\end{proof}

Now, we are ready to prove the main proposition of the section \ref{section four main}
\begin{proof}[Proof of proposition \ref{section four main}]
Fix $\epsilon>0$ small enough, and a hyperbolic set $\Lambda \subset M$ so that $\mu(\Lambda) >0$. WLOG, we assume that $\mu(\Lambda) = 1$. Choose $\ell > 0$ to be large enough so that $\mu(\Lambda^\ell) > 1-\frac\epsilon 2$. We want to find rectangle $R_1, R_2, \dots, R_k$ so that 
\begin{enumerate}
    \item $\mu(R_i) > 0$ for any $i =1 , \dots, k$;
    \item $R_i \cap R_j  = \emptyset$ for any $i \neq j$;
    \item $\mu(\Lambda \backslash \bigcup_{i=1}^k R_i) < \frac \epsilon 2 $.
\end{enumerate}
We  can construct such rectangle using lemmas \ref{rectangle at lebesgue} and \ref{refining rectangles}.
Since $\mu$-a.e point in $\Lambda^\ell$ is a Lebesgue density point, lemma \ref{rectangle at lebesgue} allows us to find at most countably many rectangles $R_1, R_2, \dots$, such that 
\begin{enumerate}
    \item $\mu(R_i) > 0$, for $i = 1, 2, \dots$ ; 
    \item $\mu(\Lambda^\ell \backslash \bigcup_{i \geq 1} R_i) = 0 $.
\end{enumerate}
Now, by the two properties above, we can find $k \geq 1$ large enough so that 
$$
\mu(\Lambda^\ell \backslash \bigcup_{i =1}^k R_i) < \frac \epsilon 2 .
$$
By lemma \ref{refining rectangles}, we can assume without lose of generality that $R_i \cap R_j = \emptyset$ for any $1 \leq i \neq j  \leq k$. All of this implies that the rectangles $R_1, \dots, R_k$ satisfy all the three properties we want. Next, for each one of the constructed rectangles, $R_1,, R_2, \dots, R_k$, we use lemma \ref{Lusin based theorem} to partition each one of the rectangles so that $\mu_{R_i}$-$\epsilon$-a.e. of these sub-rectangles satisfy the second property of $\epsilon$-coverings. Now, it is a straight forward computation to show that the total measure of such good sub-rectangles is $\geq 1-\epsilon$. This gives the partition we want. Notice that the elements of this partition can have a diameter as small as we want, independently of $\epsilon$, which follows from lemma \ref{Lusin based theorem}.

\end{proof}

\begin{section}{$(\lambda, \mu)$-regular partitions}
The goal of this section is to show that one can construct a sequence of partition $\mathcal{P}_1 < \mathcal{P}_2 < \dots $, such that each of the partitions is $(\lambda, \mu)$-regular (see definition \ref{lambda mu regular partitions definition}), and such that 

$$
\bigvee_{n \geq 1} \mathcal{P}_n = \epsilon,
$$
where $\epsilon$ is the partition by points. The results of this section are only needed in the proof of the Bernoulli property. In most cases considered in the literature, the construction of such a sequence of partitions is pretty straightforward. For example, in the case $\mu$ is an absolutely continuous measure, any sequence of finite partitions $\mathcal{P}_n$'s, with piece-wise smooth boundary and which refine to the partition by points, is $(\lambda, \mu)$-regular, for any $\lambda > 1$. The story is similar in the case of SRB measures, or more generally in the case of Equilibrium measures (see for example \cite{CH}). The basic idea in these cases is to show that $(\lambda, \mu)$-regularity follows from some precise control over the unstable, or the stable Jacobian of the map with respect to the measure at hand. The main point of this section is that if one invests some effort in choosing a good partition $\mathcal{P}$, then this property holds without any requirement on the Jacobian. Indeed, this turns out to be a measure theoretical statement that does not need any dynamics to be present. 

\begin{prop}\label{constuction of lambda mu partitions}
Let $\mu$ be any Borel probability measure on $M$, and let $\lambda > 1$. Then one can find a sequence of $(\lambda, \mu)$-regular partitions $\mathcal{P}_1 < \mathcal{P}_2 < \dots$, such that 
$$
\bigvee_{n \geq 1}\mathcal{P}_n = \epsilon.
 $$
\end{prop}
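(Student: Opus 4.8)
The plan is to reduce the statement to a single, dynamics-free measure-theoretic estimate, and then to realize that estimate by a generic grid construction. Recall that $(\lambda,\mu)$-regularity of a finite partition $\mathcal P$ of $M$ is guaranteed once one controls the $\mu$-measures of the $\lambda^{-n}$-neighbourhoods of its boundary $\partial\mathcal P:=\bigcup_{A\in\mathcal P}\partial A$; concretely, it suffices to arrange
\[
\sum_{n\ge 1}\mu\bigl(U(\partial\mathcal P,\lambda^{-n})\bigr)<\infty ,
\]
where $U(S,r)$ is the open $r$-neighbourhood of $S$ (this forces, by Borel--Cantelli, that $d(x,\partial\mathcal P)\ge\lambda^{-n}$ for all large $n$, for $\mu$-a.e.\ $x$). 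Since $\bigvee_k\mathcal P_k$ separates points --- hence equals the point partition $\epsilon$ --- as soon as $\operatorname{diam}\mathcal P_k\to0$, the task reduces to producing an increasing sequence $\mathcal P_1<\mathcal P_2<\cdots$ of finite partitions of $M$ with $\operatorname{diam}\mathcal P_k\to0$ whose boundaries all satisfy the displayed bound. No dynamics will be used anywhere.

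\textbf{The key lemma.} All the content is carried by: \emph{if $\varphi:M\to\mathbb R$ is Lipschitz, then for Lebesgue-a.e.\ $t\in\mathbb R$ one has $\sum_{n\ge1}\mu\bigl(\{|\varphi-t|<\lambda^{-n}\}\bigr)<\infty$.} I would prove this by Tonelli's theorem: for any bounded interval $[a,b]$,
\[
\int_a^b\sum_{n\ge1}\mu\bigl(\{|\varphi-t|<\lambda^{-n}\}\bigr)\,dt
=\int_M\!\left(\int_a^b\#\{\,n\ge1:\ |\varphi(x)-t|<\lambda^{-n}\,\}\,dt\right)d\mu(x),
\]
and the inner integral is bounded, uniformly in $x$, by $\tfrac1{\ln\lambda}\int_{-1}^{1}\ln^{+}\tfrac1{|u|}\,du<\infty$, since $\#\{n\ge1:|s|<\lambda^{-n}\}\le\ln^{+}(1/|s|)/\ln\lambda$ for $s\ne0$ and the relevant values of $s=\varphi(x)-t$ all satisfy $|s|<\lambda^{-1}\le1$. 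Hence the nonnegative integrand on the right is finite for Lebesgue-a.e.\ $t$; exhausting $\mathbb R$ by such intervals finishes the lemma. This is the one place where something substantive happens, and it is purely measure-theoretic --- no hyperbolicity, no Jacobians.

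\textbf{The construction.} Fix a smooth embedding $M\hookrightarrow\mathbb R^N$ and let $x_1,\dots,x_N:M\to\mathbb R$ be the resulting coordinate functions, which are Lipschitz (indeed $1$-Lipschitz for the Euclidean distance, which the Riemannian distance dominates). For $k\ge1$ and a shift $v\in\mathbb R^N$, let $\mathcal P_k(v)$ be the partition of $M$ induced by the grid of half-open cubes of side $2^{-k}$ in $\mathbb R^N$ translated by $v$. Since $M$ is compact only finitely many cubes meet it, so each $\mathcal P_k(v)$ is finite; halving the side at each step makes the $\mathcal P_k(v)$ increasing and their diameters tend to $0$. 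Moreover
\[
\partial\mathcal P_k(v)\ \subset\ M\cap\bigcup_{i,\,m}\bigl\{x_i=m2^{-k}+v_i\bigr\},
\]
a finite union of level sets of the coordinate functions, so $U(\partial\mathcal P_k(v),\lambda^{-n})$ lies in a finite union of slabs $\{\,|x_i-c|<\lambda^{-n}\,\}$. By the key lemma applied to each $\varphi=x_i$, for every $i$ and every grid value $c$ the set of ``bad'' offsets $v_i$ is Lebesgue-null; running over all $i\le N$, all $k\ge1$, and the finitely many relevant grid values at each scale produces only a countable union of null sets. Hence for Lebesgue-a.e.\ $v\in\mathbb R^N$ all these level sets satisfy the summability bound simultaneously, so each $\partial\mathcal P_k(v)$ does too. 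Fixing any such $v$, the sequence $\mathcal P_1(v)<\mathcal P_2(v)<\cdots$ is as required.

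\textbf{Main obstacle.} The only genuine step is the one-line Tonelli estimate in the key lemma; everything else is bookkeeping. The points that need a little care are: (i) that a single generic shift $v$ can be chosen to serve all scales and all $N$ coordinate directions at once --- this is just that a countable union of Lebesgue-null sets is null; and (ii) matching the neighbourhood-summability bound used here to the precise wording of Definition~\ref{lambda mu regular partitions definition}, via the Borel--Cantelli reformulation noted above. I do not expect any real difficulty there.
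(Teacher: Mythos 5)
Your proposal is correct, and it rests on the same underlying mechanism as the paper's argument: a Tonelli/Fubini estimate showing that a one-parameter family of ``test hypersurfaces'' is $(\lambda,\mu)$-regular for Lebesgue-a.e.\ value of the parameter. Indeed your key lemma, applied to the pushforward $\varphi_*\mu$, is exactly the quoted Lemma \ref{Borel measures on the interval} (= \cite{BP}[Lemma 9.4.2]), which the paper cites without proof and which you prove from scratch. Where you genuinely diverge is in the construction built on top of this fact. The paper takes $\varphi$ to be distance functions (Lemma \ref{regular balls lemma} gives regular balls of generic radius), covers $M$ by finitely many such balls at each scale, and then needs the whole chain of Lemmas \ref{lambdamu boundry-set}--\ref{regularity of joined partitions} to turn the cover into a partition and to join it with the previous one so that the sequence is increasing. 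You instead take $\varphi$ to be the coordinate functions of an embedding $M\hookrightarrow\mathbb R^N$ and use dyadic cube grids with a single generic translation $v$; the dyadic structure makes the nesting $\mathcal P_k(v)<\mathcal P_{k+1}(v)$ automatic and eliminates the join and intersection lemmas entirely, at the modest cost of invoking an ambient embedding. The one point you should make explicit, since Definition \ref{lambda mu regular partitions definition} demands that the closures of the atoms have piecewise-\emph{smooth} boundary: the sets $M\cap\{x_i=c\}$ are smooth hypersurfaces of $M$ only when $c$ is a regular value of $x_i|_M$, so you must add, via Sard's theorem, the (again Lebesgue-generic, again countably many) conditions that all relevant grid levels are regular values; this folds into the same ``countable union of null sets'' step and does not affect the argument. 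With that addendum your proof is complete and arguably cleaner than the one in the paper.
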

After that, we show that $(\lambda, \mu)$-regular partitions interact well with hyperbolicity. In particular, we show that a lot of elements of the partition 
$$
\bigvee_{n = N}^{N'}f^n\mathcal{P}
$$
become almost saturated by unstable manifolds (see definition \ref{almost saturation def}) . 
\begin{prop}\label{almost u-saturation property}
Let $f : M\longrightarrow M$ be a $C^{1+\alpha}$ diffeomorphism, and $\mu$ an ergodic hyperbolic invariant measure for $f$. Let also $\mathcal{P}$ be a $(\lambda, \mu)$-regular partition, and $\epsilon > 0$.  Then there is a $N_0 = N_0(\epsilon)$, and $\xi_0 = \xi_0(\epsilon)$, such that for any $N' \geq N \geq N_0$, and $\xi < \xi_0$, the partition
$$
\bigvee_{n = N}^{N'}f^n\mathcal{P}
$$
is $(\epsilon, \xi)$-almost $u$-saturated.
\end{prop}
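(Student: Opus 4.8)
The plan is to estimate, one index $n$ at a time, the set of points $x$ whose local unstable plaque $V^u_\xi(x)$ escapes the atom $(f^n\mathcal{P})(x)=f^n\big(\mathcal{P}(f^{-n}x)\big)$, and then to sum these bad sets over $N\le n\le N'$. The elementary observation is that $V^u_\xi(x)\subset (f^n\mathcal{P})(x)$ if and only if $f^{-n}\big(V^u_\xi(x)\big)\subset\mathcal{P}(f^{-n}x)$, and $f^{-n}\big(V^u_\xi(x)\big)$ is a sub-plaque of the unstable manifold through $f^{-n}x$. Consequently the bad set for the index $n$ is contained in $f^n\big(U_{r_n}(\partial\mathcal P)\big)$, where $U_r(\cdot)$ denotes the open $r$-neighbourhood and $r_n=r_n(x)$ bounds $\mathrm{diam}\,f^{-n}(V^u_\xi(x))$. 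Because $\mu$ is $f$-invariant, $\mu\big(f^n(U_r(\partial\mathcal P))\big)=\mu\big(U_r(\partial\mathcal P)\big)$; this is exactly what makes the argument insensitive to the length $N'-N$ of the window. Indeed, once we know $r_n\le\lambda^{-n}$, the defining summability of a $(\lambda,\mu)$-regular partition (Definition~\ref{lambda mu regular partitions definition}) gives $\sum_{n\ge N}\mu\big(U_{\lambda^{-n}}(\partial\mathcal P)\big)\to 0$ as $N\to\infty$, so the union of the bad sets over the whole window has $\mu$-measure $<\epsilon/2$ for all $N'\ge N\ge N_0(\epsilon)$.

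It therefore remains to produce a set $A$ with $\mu(A)>1-\epsilon/2$ and a scale $\xi_0=\xi_0(\epsilon)$ such that $\mathrm{diam}\,f^{-n}(V^u_\xi(x))\le\lambda^{-n}$ for all $x\in A$, all $n\ge N_0$ and all $\xi<\xi_0$. Here I would use three standard ingredients. First, the Oseledets theorem: for $\mu$-a.e.\ $x$, $\tfrac1n\log\|Df^{-n}|_{E^u(x)}\|\to-\chi_u<0$, where $\chi_u$ is the smallest unstable Lyapunov exponent of the ergodic measure $\mu$. Second, a Lyapunov (tempered) norm together with Lusin's theorem, which upgrade this convergence to a uniform bound $\|Df^{-n}|_{E^u(x)}\|\le K e^{-n\chi_u/2}$ valid for all $x$ in a set $A$ with $\mu(A)>1-\epsilon/2$ and all $n\ge 0$ — the same Lusin-type uniformisation already used in Lemma~\ref{Lusin based theorem}. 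Third, the $C^{1+\alpha}$ bounded-distortion estimate along unstable plaques: since the backward orbit of $V^u_\xi(x)$ contracts and hence has summable total diameter, the distortion of $f^{-n}$ on $V^u_\xi(x)$ is bounded by a fixed constant once $\xi$ is small, so $\mathrm{diam}\,f^{-n}(V^u_\xi(x))\le 2K\xi\,e^{-n\chi_u/2}$. We may assume $\lambda<e^{\chi_u/2}$ — this is harmless, since in the construction of the partition (Proposition~\ref{constuction of lambda mu partitions}) the value of $\lambda>1$ is at our disposal — and then, taking $\xi_0$ small and $N_0$ large, we obtain $2K\xi\,e^{-n\chi_u/2}\le\lambda^{-n}$ for all $n\ge N_0$ and $\xi<\xi_0$, as required.

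Combining the two pieces: for $N'\ge N\ge N_0$ and $\xi<\xi_0$, every $x$ outside a set of $\mu$-measure at most $\mu(M\setminus A)+\sum_{n\ge N}\mu(U_{\lambda^{-n}}(\partial\mathcal P))<\epsilon$ satisfies $V^u_\xi(x)\subset\big(\bigvee_{n=N}^{N'}f^n\mathcal P\big)(x)$. By Rokhlin disintegration along the measurable partition into unstable plaques together with Fubini, this pointwise statement transfers to the statement that $\mu$-almost every atom of $\bigvee_{n=N}^{N'}f^n\mathcal P$ contains the $\xi$-unstable plaque through $\mu$-almost all of its points, apart from a collection of atoms of total $\mu$-measure $\le\epsilon$; that is, $\bigvee_{n=N}^{N'}f^n\mathcal P$ is $(\epsilon,\xi)$-almost $u$-saturated in the sense of Definition~\ref{almost saturation def}.

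The main obstacle is the uniform contraction estimate of the second paragraph: one needs $\mathrm{diam}\,f^{-n}(V^u_\xi(x))\le\lambda^{-n}$ simultaneously for all $n$ in an unboundedly long window and for $x$ in a set of measure close to $1$, whereas the backward orbit of $f^{-n}x$ keeps leaving every fixed Pesin block, so uniform hyperbolicity on $\Lambda^\ell$ cannot be quoted verbatim. The remedy, as usual in Pesin theory, is to replace the Riemannian norm by a Lyapunov norm whose discrepancy from it grows only sub-exponentially — hence is dominated by $e^{-n\chi_u/2}$ — and to use Lusin's theorem to turn the measurability of the relevant tempered constants into genuine uniform bounds on a large set. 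Everything else (invariance of $\mu$, bounded distortion from $C^{1+\alpha}$, and the Borel--Cantelli-type summability built into $(\lambda,\mu)$-regularity) is routine.
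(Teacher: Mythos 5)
Your proposal is correct and follows essentially the same route as the paper: bound, for each $n$, the set of points whose $\xi$-unstable plaque escapes the atom of $f^n\mathcal{P}$ by $f^n$ of a $\lambda^{-n}$-neighbourhood of $\partial\mathcal{P}$, sum using invariance of $\mu$ and the summability built into $(\lambda,\mu)$-regularity, and finish with the measure-of-bad-atoms lemma. Your Lyapunov-norm-plus-Lusin uniformisation of the backward contraction is exactly what the paper accomplishes by fixing a Pesin block $\Lambda^\ell$ with $\mu(\Lambda^\ell)\geq 1-\epsilon^2/2$ and invoking uniform hyperbolicity on it (the backward estimates on $\Lambda^\ell$ hold for all $n$ even though the orbit leaves the block), and your remark that $\lambda$ must be chosen compatibly with the contraction rate is a point the paper leaves implicit.
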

\begin{subsection}{Construction of $(\lambda, \mu)-$regular partitions}

\begin{definition}\label{lambda mu regular partitions definition}
Let $N \subset M$ be a local submanifold of $M$ of dimension $\dim(M)-1$. We say that $N$ is $(\lambda, \mu)$-regular if 
$$
\sum_{n \geq 1} \mu\left(B(N, \lambda^{-n}) \right) < \infty.
$$
We say that an open set $U$, whose closure $\bar U$ has a piece-wise smooth boundary is $(\lambda, \mu)$-regular if 
$$
\sum_{n \geq 1} \mu\left(\partial_{\lambda^{-n}} U \right) < \infty.
$$
We say that a partition $\mathcal{P}$ of $M$ is $(\lambda, \mu)$-regular if every element of the partition has an open interior, and such that closure has piecewise-smooth boundary and is $(\lambda, \mu)$-regular.
\end{definition}
\begin{lemma}\label{refine open covering }
Let $N$ be a smooth \textbf{local} submanifold of $M$, of dimension $\dim(M)-1$. If $N$ is $(\lambda, \mu)$-regular, then for any subset $V \subset N$, open in the relative topology, one has that $V$ is also $(\lambda, \mu)$ -regular.
\end{lemma}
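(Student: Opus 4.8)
The plan is to prove this by a direct monotonicity comparison of the defining series; no dynamics and essentially no estimate is needed. First I would observe that the hypothesis ``$V$ open in the relative topology of $N$'' is there precisely so that the conclusion is even meaningful: an open subset of the smooth submanifold $N$ is itself a smooth local submanifold of $M$ of the same dimension $\dim(M)-1$, so ``$V$ is $(\lambda,\mu)$-regular'' makes sense. (For a general, non-open subset the metric neighborhood comparison below would still hold, but $V$ would not be an object to which Definition \ref{lambda mu regular partitions definition} applies.)

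The key step is the elementary inclusion of metric neighborhoods $B(V,r)\subseteq B(N,r)$ for every $r>0$, which holds simply because $V\subseteq N$: if $x$ satisfies $d(x,V)<r$, choose $v\in V$ with $d(x,v)<r$; since $v\in N$ this gives $d(x,N)\le d(x,v)<r$, i.e. $x\in B(N,r)$. Applying this with $r=\lambda^{-n}$ and using monotonicity of $\mu$ yields $\mu\big(B(V,\lambda^{-n})\big)\le \mu\big(B(N,\lambda^{-n})\big)$ for all $n\ge 1$. Summing over $n$,
$$
\sum_{n\ge 1}\mu\big(B(V,\lambda^{-n})\big)\ \le\ \sum_{n\ge 1}\mu\big(B(N,\lambda^{-n})\big)\ <\ \infty,
$$
where the finiteness on the right is exactly the $(\lambda,\mu)$-regularity of $N$. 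Hence $V$ is $(\lambda,\mu)$-regular, which is what we wanted.

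I do not expect any genuine obstacle here: the lemma is a packaging statement, and the only point demanding a line of justification is the neighborhood inclusion $B(V,r)\subseteq B(N,r)$ together with the remark that openness of $V$ in $N$ is what licenses calling $V$ a submanifold. It is worth recording in the write-up that the same comparison argument will be reused later whenever one replaces a hypersurface by a relatively open piece of it (e.g.\ when cutting the boundaries of partition elements down to local charts in the proof of Proposition \ref{constuction of lambda mu partitions}).
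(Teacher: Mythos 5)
Your argument is correct and is exactly the intended one: since $V\subseteq N$ implies $B(V,r)\subseteq B(N,r)$, the defining series for $V$ is dominated term by term by that for $N$, and the paper itself omits the proof precisely because this monotonicity argument is immediate. Your side remark that relative openness is what makes $V$ a submanifold of the same dimension (so that the definition applies) is a worthwhile clarification to record.
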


\begin{remark}
In the literature, the Bernoulli property has been generally proved for measures with nice holonomy maps. 
\end{remark}

\begin{lemma}\label{lambdamu boundry-set}
A precompact open set $U$, whose boundary is piece-wise smooth is $(\lambda, \mu)$-regular if and only if each smooth piece of the boundary is $(\lambda, \mu)-regular$.
\end{lemma}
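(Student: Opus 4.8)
The plan is to reduce the statement to two elementary facts about metric neighborhoods. Write $\partial_\delta U$ for the $\delta$-neighborhood of $\partial U$, so that $\partial_\delta U = B(\partial U,\delta)$, consistent with the notation $B(N,\delta)$ used for submanifolds in Definition~\ref{lambda mu regular partitions definition}. Since $U$ is precompact, $\partial U$ is compact, and because the boundary is piecewise smooth it decomposes into finitely many smooth pieces $N_1,\dots,N_k$ with $\partial U=\bigcup_{i=1}^k \overline{N_i}$ (one may take the $N_i$ closed, in which case $\partial U=\bigcup_i N_i$ on the nose). The two facts I will use are: (a) $N_i\subseteq\partial U$, hence $B(N_i,\delta)\subseteq\partial_\delta U$ for every $\delta>0$; and (b) by the triangle inequality $\partial_\delta U=B(\partial U,\delta)\subseteq\bigcup_{i=1}^k B(N_i,\delta)$ when the $N_i$ are closed, and $\subseteq\bigcup_{i=1}^k B(N_i,2\delta)$ in general.

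For the ``only if'' direction, assume $U$ is $(\lambda,\mu)$-regular. By (a), $\mu\bigl(B(N_i,\lambda^{-n})\bigr)\le\mu\bigl(\partial_{\lambda^{-n}}U\bigr)$ for every $n$ and every $i$, so $\sum_{n\ge1}\mu\bigl(B(N_i,\lambda^{-n})\bigr)\le\sum_{n\ge1}\mu\bigl(\partial_{\lambda^{-n}}U\bigr)<\infty$, i.e.\ each $N_i$ is $(\lambda,\mu)$-regular. For the ``if'' direction, assume every $N_i$ is $(\lambda,\mu)$-regular. By (b), $\mu\bigl(\partial_{\lambda^{-n}}U\bigr)\le\sum_{i=1}^k\mu\bigl(B(N_i,\lambda^{-n})\bigr)$ (or with $2\lambda^{-n}$ in place of $\lambda^{-n}$). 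Summing over $n$ and interchanging the finite sum over $i$ with the sum over $n$ gives $\sum_{n\ge1}\mu\bigl(\partial_{\lambda^{-n}}U\bigr)\le\sum_{i=1}^k\sum_{n\ge1}\mu\bigl(B(N_i,\lambda^{-n})\bigr)<\infty$, so $U$ is $(\lambda,\mu)$-regular.

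The only point that needs any care is the constant $2$ that appears in (b) when the smooth pieces are taken relatively open, so that $\partial U$ is their union only after taking closures. This is harmless: choosing an integer $m_0$ with $\lambda^{m_0}\ge2$ one has $B(N_i,2\lambda^{-n})\subseteq B(N_i,\lambda^{-(n-m_0)})$, whence $\sum_{n\ge1}\mu\bigl(B(N_i,2\lambda^{-n})\bigr)\le\sum_{j\ge1-m_0}\mu\bigl(B(N_i,\lambda^{-j})\bigr)$, which differs from $\sum_{j\ge1}\mu\bigl(B(N_i,\lambda^{-j})\bigr)$ by at most $m_0$ extra terms, each bounded by $1$ since $\mu$ is a probability measure; so finiteness is preserved. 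I do not expect a genuine obstacle here: the entire content of the lemma is that the summability condition defining $(\lambda,\mu)$-regularity is compatible with finite unions and with passing between a precompact open set and the smooth pieces of its boundary, and no dynamics or local product structure is involved.
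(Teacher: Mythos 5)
Your proof is correct and is essentially the paper's own argument: the paper likewise writes $\partial U = D_1\cup\dots\cup D_m$ and uses the identity $\partial_r U = B_r(D_1)\cup\dots\cup B_r(D_m)$ to transfer summability back and forth between the set and the pieces. Your extra care with the factor $2$ for relatively open pieces and the index shift $\lambda^{m_0}\ge 2$ is a harmless refinement of the same idea.
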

\begin{proof}
This immediately follows from the fact that, if we let $\partial U = D_1 \cup \dots \cup D_m$, then we have $\partial_r U = B_r(D_1) \cup \dots B_r(D_m)$. 
\end{proof}

\begin{lemma}
Let $U$ and $V$ be precomapct and open with piece-wise smooth boundary. If $U$ and $V$ are $(\lambda, \mu)$-regular, then the connected components of the sets $U \backslash (U\cap V)$, $V \backslash (U\cap V)$ and $U\cap V$ are all $(\lambda, \mu)$-regular.
\end{lemma}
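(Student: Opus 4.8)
The plan is to reduce the statement to the elementary inclusion $\partial W\subseteq\partial U\cup\partial V$ for every connected component $W$ of the three sets, after which $(\lambda,\mu)$-regularity follows at once from sub-additivity of $\mu$. First I would dispose of a point-set technicality. The set $U\setminus(U\cap V)=U\cap V^{c}$ need not be open, but its interior is $U\setminus\overline V$, and the two differ only by the subset $U\cap\partial V$ of $\partial V$; since $(\lambda,\mu)$-regularity of $V$ forces $\mu(\partial V)=0$ (we have $\mu(\partial_{\lambda^{-n}}V)\to 0$ and $\partial V\subseteq\partial_{\lambda^{-n}}V$ for every $n$), I may replace $U\setminus(U\cap V)$ by the open set $U\setminus\overline V$, and symmetrically $V\setminus(U\cap V)$ by $V\setminus\overline U$, while $U\cap V$ is already open. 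Because $M$ is locally connected, the connected components of each of these open sets are open, and for any open $O$ and any component $W$ of $O$ one has $\partial W\subseteq\partial O$: a point of $\partial W$ lying in $O$ would, by local connectedness, have a connected neighbourhood contained in $O$ and meeting $W$, which would force that neighbourhood, hence the point, into $W$ — a contradiction.

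Next I would combine this with the standard inclusions $\partial(U\cap V)\subseteq\partial U\cup\partial V$ and $\partial(U\setminus\overline V)\subseteq\partial U\cup\partial\overline V\subseteq\partial U\cup\partial V$ (using $\partial\overline V\subseteq\partial V$, since $V\subseteq\operatorname{int}\overline V$), together with the symmetric statement for $V\setminus\overline U$. This yields $\partial W\subseteq\partial U\cup\partial V$ for every component $W$ of any of the three sets, and therefore $\partial_{r}W=B(\partial W,r)\subseteq B(\partial U,r)\cup B(\partial V,r)=\partial_{r}U\cup\partial_{r}V$ for all $r>0$. Consequently $\mu(\partial_{\lambda^{-n}}W)\le\mu(\partial_{\lambda^{-n}}U)+\mu(\partial_{\lambda^{-n}}V)$, and summing over $n\ge 1$ gives $\sum_{n\ge 1}\mu(\partial_{\lambda^{-n}}W)\le\sum_{n\ge 1}\mu(\partial_{\lambda^{-n}}U)+\sum_{n\ge 1}\mu(\partial_{\lambda^{-n}}V)<\infty$. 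Note that connectedness of $W$ plays no role in this estimate; it matters only for the piece-wise smoothness discussion below.

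The one point deserving genuine care is that $\partial W$ be a piece-wise smooth hypersurface, not merely a closed subset of $\partial U\cup\partial V$. I would handle this under the standard hypothesis — which can be arranged throughout the construction in Proposition~\ref{constuction of lambda mu partitions} — that $\partial U$ and $\partial V$ are in general position, so that $\partial U\cap\partial V$ is a piece-wise smooth set of codimension two. Writing $\partial U=D_{1}\cup\dots\cup D_{k}$ and $\partial V=D_{1}'\cup\dots\cup D_{m}'$ for the smooth pieces, the set $(\partial U\cup\partial V)\setminus(\partial U\cap\partial V)$ then has finitely many connected pieces, each a relatively open subset of some $D_{i}$ or $D_{j}'$; $\partial W$ is the union of the closures of those pieces contained in $\overline W$, hence piece-wise smooth. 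Moreover, each $D_{i}$ and $D_{j}'$ is $(\lambda,\mu)$-regular by Lemma~\ref{lambdamu boundry-set}, so by Lemma~\ref{refine open covering } each such sub-piece is $(\lambda,\mu)$-regular as well, which re-derives the summability of the previous paragraph through Lemma~\ref{lambdamu boundry-set}. This bookkeeping about how the two boundaries meet is the only real obstacle; the measure-theoretic core of the lemma is immediate from $\partial W\subseteq\partial U\cup\partial V$.
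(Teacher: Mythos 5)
Your proposal is correct and follows essentially the same route as the paper's (much terser) proof: the key point in both is that the boundary of each connected component sits inside $\partial U\cup\partial V$, whose smooth pieces are $(\lambda,\mu)$-regular, so regularity of the components follows from Lemma \ref{lambdamu boundry-set} (or, as you note, directly from sub-additivity of $\mu$ applied to $\partial_{r}W\subseteq\partial_{r}U\cup\partial_{r}V$). Your write-up is more careful than the paper's on two points it glosses over — that $U\setminus(U\cap V)$ must be replaced by its interior, and that a general-position hypothesis is needed for the component boundaries to be piece-wise smooth — but these are refinements of, not departures from, the same argument.
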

\begin{proof}
This follows from the fact that each of the connected components of the sets $U \cap V $, $U \backslash U \cap V$ and $V \backslash U \cap V$ have piece-wise smooth boundaries, and each of these boundaries can be taken so that they are subsets of one of the smooth pieces of the boundaries of $U$ and $V$. Since each one of these pieces now is $(\lambda, \mu)$-regular, we can use lemma \ref{lambdamu boundry-set} to immediately see that the lemma now follows. 
\end{proof}

\begin{lemma}\label{regularity of refining partition}
Let $U_1, \dots U_k$ be precompact open sets, each with a piecewise smooth boundary, and are $(\lambda, \mu)$-regular. Then we can find a finite $(\lambda, \mu)$-regular partition $\mathcal{P}_U$ of $\bigcup _{i = 1}^{k} U_i$.
\end{lemma}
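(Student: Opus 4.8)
The plan is to take $\mathcal{P}_U$ to be the partition of $\bigcup_{i=1}^{k}U_i$ into the atoms of the finite Boolean algebra generated by $U_1,\dots,U_k$, refined into connected components. Concretely, for each nonempty $S\subseteq\{1,\dots,k\}$ I set
$$
A_S \;=\; \Big(\bigcap_{i\in S}U_i\Big)\;\cap\;\Big(\bigcap_{j\notin S}\big(M\setminus\overline{U_j}\big)\Big),
$$
an open precompact set, and discard the empty ones. The $A_S$ are pairwise disjoint, and a point of $\bigcup_i U_i$ lies in some $A_S$ unless it lies in $\bigcup_i\partial U_i$. I would first observe that $(\lambda,\mu)$-regularity of a hypersurface $N$ forces $\mu(N)=0$, since $\mu(N)\le\mu(B(N,\lambda^{-n}))\to 0$; combined with Lemma \ref{lambdamu boundry-set} this gives $\mu(\partial U_i)=0$ for every $i$, so the $A_S$ already exhaust $\bigcup_i U_i$ modulo a $\mu$-null set, which is exactly the relevant notion for the measure-theoretic uses of these partitions. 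If an honest partition is wanted one absorbs the leftover boundary points into adjacent atoms, which alters neither the closures nor the quantities $\mu(\partial_{\lambda^{-n}}U)$ appearing in the definition; finiteness is automatic since each precompact $A_S$ with piecewise-smooth boundary has finitely many components.

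The core step is checking that every $A_S$, hence every component, is $(\lambda,\mu)$-regular. Here I would write $A_S=\big(\bigcap_{i\in S}U_i\big)\setminus\big(\bigcup_{j\notin S}\overline{U_j}\big)$ and argue, exactly as in the proofs of the two immediately preceding lemmas, that $\overline{A_S}$ has piecewise-smooth boundary and
$$
\partial A_S\;\subseteq\;\bigcup_{i=1}^{k}\partial U_i .
$$
Then each smooth piece of $\partial A_S$ can be chosen inside a smooth piece $D$ of some $\partial U_i$; since $U_i$ is $(\lambda,\mu)$-regular, Lemma \ref{lambdamu boundry-set} makes $D$ regular, Lemma \ref{refine open covering } passes regularity to the relatively open subset of $D$ cut out by $\partial A_S$, and a second application of Lemma \ref{lambdamu boundry-set} returns regularity of $A_S$ itself. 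The same reasoning applies to each connected component, whose boundary is a union of smooth pieces of $\partial A_S$.

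The only place needing genuine care — and it is bookkeeping rather than a real obstacle — is making the piecewise-smooth structure of the atoms precise: I must verify that $\partial A_S$, and the boundary of each of its components, really does decompose into finitely many smooth hypersurface pieces, each contained in a smooth piece of one of the $\partial U_i$, so that Lemmas \ref{refine open covering } and \ref{lambdamu boundry-set} apply verbatim. Everything else is a direct consequence of those two lemmas together with the fact that $(\lambda,\mu)$-regular hypersurfaces are $\mu$-null. Alternatively one could induct on $k$: given a finite $(\lambda,\mu)$-regular partition of $\bigcup_{i<k}U_i$, split each of its elements along $U_k$ using the preceding lemma on the components of $U\cap V$, $U\setminus V$ and $V\setminus U$, and adjoin the components of $U_k\setminus\overline{\bigcup_{i<k}U_i}$, whose regularity is handled by the same boundary inclusion; this route uses the previous lemma more literally but requires the identical computation.
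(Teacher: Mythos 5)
Your proposal is correct and matches the paper's intent: the paper's entire proof is the one-line remark that the statement is ``an immediate application of the previous lemma,'' i.e.\ the inductive refinement via the two-set lemma that you describe as your alternative route, and your primary Boolean-atom construction is just a non-inductive packaging of the same idea, resting on the same two facts (boundaries of atoms sit inside $\bigcup_i\partial U_i$, and Lemmas \ref{refine open covering } and \ref{lambdamu boundry-set} transfer regularity to subsets of smooth boundary pieces). Your observation that $(\lambda,\mu)$-regular hypersurfaces are $\mu$-null, which lets you dispose of the leftover boundary points, is a detail the paper silently omits but is handled correctly here.
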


\begin{proof}
This is an immediate application of the previous lemma.  
\end{proof}

\begin{lemma}\label{regularity of joined partitions}
Let $\mathcal{P}_1$ and $\mathcal{P}_2$ be two $(\lambda, \mu)$-regular partitions. Then $\mathcal{P}_1 \vee \mathcal{P}_2$ is $(\lambda, \mu)$-regular.
\end{lemma}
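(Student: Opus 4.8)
The plan is to reduce the claim to the single--element estimate that was already established for pairs of open sets. Recall that a partition is $(\lambda,\mu)$-regular precisely when \emph{each} of its elements, viewed as a subset of $M$, is an open set whose closure has piecewise--smooth boundary and satisfies $\sum_{n\ge 1}\mu(\partial_{\lambda^{-n}}U)<\infty$; there is no condition tying different elements together. Hence it is enough to fix an arbitrary nonempty element $W$ of $\mathcal{P}_1\vee\mathcal{P}_2$ and show that $W$ by itself is $(\lambda,\mu)$-regular. By the definition of the join, $W$ is a connected component of $P^{\circ}\cap Q^{\circ}$ for some $P\in\mathcal{P}_1$ and $Q\in\mathcal{P}_2$; in particular $W$ is open, and (since $P$ and $Q$ are precompact) so is $\overline{W}$ precompact.

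Next I would control $\partial W$. Because $W$ is a connected component of the open set $O:=P^{\circ}\cap Q^{\circ}$, one has $\partial W\subseteq\partial O$ (a boundary point of $W$ cannot lie in $O$, else it would lie in some component of $O$, which is open and disjoint from $W$), and $\partial O\subseteq\partial P\cup\partial Q$ by the elementary fact $\partial(A\cap B)\subseteq\partial A\cup\partial B$. Thus $\partial W$ sits inside the finitely many smooth pieces forming $\partial P$ and $\partial Q$, so it is piecewise smooth, and for every $r>0$
$$
\partial_r W=B_r(\partial W)\subseteq B_r(\partial P)\cup B_r(\partial Q)=\partial_r P\cup\partial_r Q .
$$
Consequently $\mu(\partial_r W)\le \mu(\partial_r P)+\mu(\partial_r Q)$, and summing over $r=\lambda^{-n}$,
$$
\sum_{n\ge 1}\mu\bigl(\partial_{\lambda^{-n}}W\bigr)\ \le\ \sum_{n\ge 1}\mu\bigl(\partial_{\lambda^{-n}}P\bigr)+\sum_{n\ge 1}\mu\bigl(\partial_{\lambda^{-n}}Q\bigr)\ <\ \infty ,
$$
the two series on the right being finite since $\mathcal{P}_1$ and $\mathcal{P}_2$ are $(\lambda,\mu)$-regular. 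Hence $W$ is $(\lambda,\mu)$-regular, and as $W$ was an arbitrary element of $\mathcal{P}_1\vee\mathcal{P}_2$, the partition $\mathcal{P}_1\vee\mathcal{P}_2$ is $(\lambda,\mu)$-regular. (Alternatively one can bypass this hands--on estimate and simply apply the preceding lemma on connected components of $U\cap V$ to $U=P^{\circ}$, $V=Q^{\circ}$, which gives directly that the components of $P^{\circ}\cap Q^{\circ}$ are $(\lambda,\mu)$-regular.)

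The only points requiring a line of care are routine: the set--topology inclusions $\partial(A\cap B)\subseteq\partial A\cup\partial B$ and $\partial P^{\circ}=\partial\overline P$ for the (regular closed) partition elements, so that $\partial W$ genuinely lies inside the given piecewise--smooth boundaries; and the observation that $P\cap Q$ is a bounded open set with finitely many smooth boundary pieces, hence has finitely many connected components, so each element of the join is well behaved. None of this is a real obstacle; the one mild subtlety worth flagging is that $(\lambda,\mu)$-regularity of a partition is imposed element by element, so no summation over the (possibly infinite) family of elements is ever needed. It is also worth remarking, although it is not used in the statement, that $(\lambda,\mu)$-regularity forces $\mu(\partial P)=0$ for every element, so that $\mathcal{P}_1\vee\mathcal{P}_2$ in fact partitions a set of full $\mu$-measure.
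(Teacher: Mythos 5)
Your proof is correct and follows essentially the route the paper intends: the paper states this lemma without proof, leaving it as an application of the preceding (unnumbered) lemma on intersections $U\cap V$, which is exactly the containment $\partial_r(P\cap Q)\subseteq\partial_r P\cup\partial_r Q$ and the resulting term-by-term comparison of the series that you carry out explicitly. Your remark that one can simply cite that preceding lemma is the paper's own (implicit) argument, so nothing is missing.
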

Our construction of the refining $(\lambda, \mu)$-regular partitions starts with the following lemma.
\begin{lemma}\label{regular balls lemma}
Let $\mu$ be a Borel measure and $\lambda > 1$,then there is a $\delta > 0$ such that for any $x$ in $M$, we have that the ball $B_r(x)$ is $(\lambda, \mu)$-regular, for Lebesgue almost every $r \in (0, \delta)$. 
\end{lemma}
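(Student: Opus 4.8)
The plan is to reduce the statement to Lebesgue's theorem that a monotone function on an interval is differentiable almost everywhere. Since $M$ is compact, its injectivity radius is positive; put $\delta$ equal to this injectivity radius. Then for every $x \in M$ and every $r \in (0,\delta)$ the metric ball $B_r(x)$ is a precompact open set whose topological boundary is the geodesic sphere $S_r(x) = \{y : d(x,y) = r\}$, a single smooth embedded hypersurface; in particular $\partial_\epsilon B_r(x) = B\big(S_r(x),\epsilon\big)$, so by Lemma~\ref{lambdamu boundry-set} it suffices to prove that for Lebesgue-a.e.\ $r \in (0,\delta)$ one has $\sum_{n \ge 1}\mu\big(B(S_r(x),\lambda^{-n})\big) < \infty$.

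Fix $x$ and set $g(r) := \mu\big(B_r(x)\big)$ for $r \in (0,\delta)$. This function is non-decreasing and bounded by $\mu(M) < \infty$, hence differentiable, with finite derivative, at Lebesgue-a.e.\ $r \in (0,\delta)$. The single geometric input we need is that, by the triangle inequality, $d\big(y,S_r(x)\big) \ge |d(x,y)-r|$ for every $y \in M$; consequently, for every $\epsilon > 0$,
$$
B\big(S_r(x),\epsilon\big) \ \subseteq\ \{\,y : r-\epsilon < d(x,y) < r+\epsilon\,\} \ \subseteq\ B_{r+\epsilon}(x) \setminus B_{r-\epsilon}(x),
$$
and therefore $\mu\big(B(S_r(x),\epsilon)\big) \le g(r+\epsilon) - g(r-\epsilon)$.

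Now let $r \in (0,\delta)$ be any point at which $g$ is differentiable. Then $\limsup_{\epsilon \to 0^+}\big(g(r+\epsilon)-g(r-\epsilon)\big)/\epsilon = 2g'(r) < \infty$, so there are $\epsilon_0 > 0$ and $C < \infty$ with $g(r+\epsilon)-g(r-\epsilon) \le C\epsilon$ whenever $0 < \epsilon < \epsilon_0$. Letting $n_0$ be the smallest index with $\lambda^{-n_0} < \epsilon_0$ (finite, as $\lambda > 1$), we get
$$
\sum_{n \ge 1}\mu\big(B(S_r(x),\lambda^{-n})\big) \ \le\ n_0\,\mu(M) \ +\ C\sum_{n > n_0}\lambda^{-n} \ <\ \infty ,
$$
since the geometric series converges. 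Hence $S_r(x)$, and with it $B_r(x)$, is $(\lambda,\mu)$-regular at every differentiability point of $g$, i.e.\ for Lebesgue-a.e.\ $r \in (0,\delta)$; and $\delta$ was chosen independently of $x$, which is exactly the assertion.

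The argument is essentially routine. The only two points requiring any care are the uniform choice of $\delta$, which is precisely what compactness of $M$ provides (a positive injectivity radius) and which also guarantees that $\partial B_r(x)$ is a genuine smooth hypersurface rather than something singular, and the elementary estimate $d(y,S_r(x)) \ge |d(x,y)-r|$, which is what allows one to squeeze the $\epsilon$-neighborhood of the sphere between two concentric balls and thereby bound it by an increment of the monotone function $g$. I do not expect a substantive obstacle.
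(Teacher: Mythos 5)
Your proof is correct and follows essentially the same route as the paper: the paper reduces the claim to the one-dimensional statement that for a Borel measure on an interval, $\sum_n \mu\bigl((r-\lambda^{-n},r+\lambda^{-n})\bigr)<\infty$ for a.e.\ $r$ (citing \cite{BP}[Lemma 9.4.2]), and your containment $B(S_r(x),\epsilon)\subseteq B_{r+\epsilon}(x)\setminus B_{r-\epsilon}(x)$ is exactly that reduction applied to the radial distribution function $g(r)=\mu(B_r(x))$. The only difference is that you prove the interval statement yourself via Lebesgue's a.e.\ differentiability of monotone functions instead of quoting it (the cited lemma is usually proved by a Fubini argument, but your differentiation argument is equally valid), and your uniform choice of $\delta$ as the injectivity radius correctly handles the smoothness of the boundary spheres.
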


Lemma \ref{regular balls lemma} follows from the following lemma concerning general Borel measures on intervals. 

\begin{lemma}[\cite{BP}[Lemma 9.4.2]]\label{Borel measures on the interval}
Let $\mu$ be a Borel measure on the interval $I = (0, r_0)$, where $r_0 > 0$. Then for any $\lambda > 1$, the set 
$$
\left \{r \in I : \sum_{n=1}^\infty \mu \left((r-\lambda^{-n}, r+\lambda^{-n}) \right) < \infty \right \}
$$
has Lebesgue measure $r_0$.
\end{lemma}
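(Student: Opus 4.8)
The plan is to prove the statement by a direct application of Tonelli's theorem, showing that the ``bad set'' --- where the series diverges --- is Lebesgue-null. Since $\mu$ is a finite Borel measure on $I$ (and one should read $\mu\big((r-\lambda^{-n},r+\lambda^{-n})\big)$ as the $\mu$-measure of the intersection of that interval with $I$), it suffices to bound
$$
\int_I \sum_{n\geq 1}\mu\big((r-\lambda^{-n},\,r+\lambda^{-n})\big)\,dr
$$
and show it is finite; then the integrand is finite for Lebesgue-a.e.\ $r\in I$, which is exactly the claim, the Lebesgue measure of $I$ being $r_0$.

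First I would fix $n$ and write
$$
\int_I \mu\big((r-\lambda^{-n},\,r+\lambda^{-n})\big)\,dr
= \int_I\int_I \mathbf{1}\{|r-s|<\lambda^{-n}\}\,d\mu(s)\,dr,
$$
using that $\mu$ lives on $I$. Tonelli's theorem (the integrand is nonnegative and measurable) lets me swap the order of integration, giving $\int_I \mathrm{Leb}\big(\{r\in I:|r-s|<\lambda^{-n}\}\big)\,d\mu(s)$. For each $s$ the inner Lebesgue measure is at most $2\lambda^{-n}$, so the $n$-th term is at most $2\lambda^{-n}\mu(I)$.

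Next, summing over $n$ and using monotone convergence to bring the sum inside the integral,
$$
\int_I \sum_{n\geq 1}\mu\big((r-\lambda^{-n},\,r+\lambda^{-n})\big)\,dr
\;\leq\; 2\mu(I)\sum_{n\geq 1}\lambda^{-n}
\;=\; \frac{2\mu(I)}{\lambda-1}\;<\;\infty,
$$
since $\lambda>1$. Hence $\sum_{n\geq 1}\mu\big((r-\lambda^{-n},r+\lambda^{-n})\big)<\infty$ for Lebesgue-a.e.\ $r\in I$, so the set in the statement has Lebesgue measure $r_0$.

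I do not expect a genuine obstacle here: the argument is the routine Tonelli estimate above. The only points needing a line of care are (i) that $\mu$ is finite is precisely what makes the bound finite --- if one only assumes $\sigma$-finiteness, one first exhausts $I$ by sets of finite measure and applies the above to each, then takes a countable union of null sets; and (ii) the convention that the interval in $\mu\big((r-\lambda^{-n},r+\lambda^{-n})\big)$ is implicitly intersected with $I$, which is what legitimizes the Fubini--Tonelli manipulation. Everything else is the elementary computation displayed above.
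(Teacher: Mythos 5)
Your main argument is correct and complete: the paper itself offers no proof of this lemma (it is quoted from Barreira--Pesin, Lemma 9.4.2), and the Tonelli computation you give --- $\int_I \mu\bigl((r-\lambda^{-n},r+\lambda^{-n})\bigr)\,dr \le 2\lambda^{-n}\mu(I)$, summed over $n$ --- is exactly the standard proof, with the right reading of $\mu(J)$ as $\mu(J\cap I)$ and the right observation that finiteness of $\mu$ is what is being used.

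One peripheral remark in your closing paragraph is not right, though it does not affect the lemma as stated or as used (in the application $\mu$ is the pushforward of a probability measure to the radius coordinate, hence finite). For a merely $\sigma$-finite $\mu$ the exhaustion argument fails: writing $\mu=\sum_k \mu_k$ with each $\mu_k$ finite, you get that for a.e.\ $r$ each individual series $\sum_n \mu_k\bigl(J_n(r)\bigr)$ converges, but $\sum_n \mu\bigl(J_n(r)\bigr)=\sum_k\sum_n \mu_k\bigl(J_n(r)\bigr)$ is a sum of countably many finite quantities and need not converge; indeed an infinite purely atomic measure with atoms accumulating suitably gives a counterexample. So the lemma genuinely requires $\mu(I)<\infty$, and the aside should be dropped rather than patched.
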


The basic idea of the construction now is to use lemma \ref{regular balls lemma} to construct a covering of $M$ by finitely many $(\lambda, \mu)$-regular balls of arbitrarily small radius, and then use lemma \ref{refine open covering } to construct a $(\lambda, \mu)$-regular covering. The main issue then is to ensure that the sequence of coverings we get are increasing. We will do that by using lemma \ref{regularity of joined partitions}.
\begin{proof}[Proof of proposition \ref{constuction of lambda mu partitions}]
Let $\mathcal{P}$ be a $(\lambda, \mu)$-regular partition. Fix a small $\epsilon>0$. We wish to show that there is a $(\lambda, \mu)$-regular partition $\mathcal{P}'$ such that $\mathcal{P} < \mathcal{P}' $, and the radius of each of the elements of the partition $\mathcal{P}'$ is smaller than $\epsilon$. To do this, we first use lemma \ref{regular balls lemma} to construct a finite covering of $M$ by balls of radius smaller than $\epsilon$. Then, we use lemma \ref{regularity of refining partition} to construct a partition $\mathcal{Q}$ of $M$. By construction, each element of this partition has radius smaller than $\epsilon$, and $\mathcal{Q}$ is a $(\lambda, \mu)$-regular partition. Next, we define $\mathcal{P}' := \mathcal{P} \vee  \mathcal{Q}$, then lemma \ref{regularity of joined partitions} insures that $\mathcal{P}'$ is a $(\lambda, \mu)$-regular partition, it is finite and by construction, each element of this partition has radius smaller than $\epsilon$. Now to prove the proposition, we start with any $(\lambda, \mu)$-regular partition. Then, let us take for example $\epsilon_n = \frac{1}{2^n}$. Then using the above, we can inductively construct a sequence of partitions, $\mathcal{P}_1 < \mathcal{P}_2 < \dots $, such that each partition has elements of radius less than $\epsilon_n$. This immediately implies that $\bigvee_{n\geq 1}\mathcal{P}_n = \epsilon$.

\end{proof}
\end{subsection}

\begin{subsection}{$(\lambda, \mu)$-regular partitions and almost $u$-saturation}
The following definition can be found in \cite{exponential mixing}[Definition 4.1]. The almost $u$-saturation property of $(\lambda, \mu)$-regular partitions follows from similar arguments to the proof of  \cite{exponential mixing}[lemma 4.2]. 
\begin{definition}[\cite{exponential mixing}[Definition 4.1]]\label{almost saturation def}
we say that a set $A \subset M$ is $(\epsilon, \xi)$-u-saturated if there a subset $E \subset A$, with $\mu(E) \geq (1-\epsilon)\mu(A)$, and such that for every $x \in E$, we have $V^u_\xi(x) \subset A$. We say that a finite partition $\mathcal{P}$ is$(\epsilon, \xi)$-u-saturated if $\epsilon$-a.e. atom of $\mathcal{P}$ is $(\epsilon, \xi)$-u-saturated.  
\end{definition}
\begin{proof}[Proof of proposition \ref{almost u-saturation property}]
Let us fix small $\epsilon>0$. Take $\ell>0$ so that $\mu(\Lambda^\ell) \geq 1- \epsilon^2/2$, and let us fix $\xi < r_\ell$, where $r_\ell$ is the size of unstable manifolds of points in $\Lambda^\ell$ (see \cite{BP}[(8.4)] for example). Let $ P \in \mathcal{P}$, and $x \in f^k(P) \cap \Lambda^\ell$, and assume that the $V^u_{\xi}(x)$ intersects the boundary of $P$. Then by the uniform hyperbolicity of $\Lambda^\ell$, we have $f^{-k}(x) \in \partial_{C_\ell \lambda^{-k} \xi }(P)$, where the constant $C_\ell$ depends on $\Lambda^\ell$. Therefore, the set of points $x \in  \Lambda^\ell$ for which $V^u_\xi(x)$ intersect the boundary of the partition element $f^k(P)$, for some $k \geq  N$ and for some $P \in \mathcal{P}$ is at most $\sum_{k \geq N}\mu(f^k\left(\partial_{C_\ell \lambda^{-k} \xi}(\mathcal{P})\right)) = \sum_{k \geq N}\mu(\partial_{C_\ell \lambda^{-k} \xi}(\mathcal{P}))$. Now, since $\mathcal{P}$ is $(\lambda, \mu)$-regular, we see that we can take $N = N(\epsilon)$ large enough so that we have $\sum_{k \geq N} \mu (\partial_{C_\ell \lambda^{-k}\xi}(\mathcal{P})) < \epsilon^2/2$. This means that for any $N' \geq N$, the set of points in $x \in M$ for which $V^u_\xi(x)$ is not completely contained in the atom of $\bigvee_{i=N}^{N'}f^i\mathcal{P}$ containing the point $x$ have measure at most $\epsilon^2$. The proposition is proved now by applying lemma \ref{measure of bad atoms}. 
\end{proof}
\end{subsection}
\end{section}

\begin{section}{Ergodic Components}
We start with the following well-known lemma, which we prove here for the sake of completeness. Let $\Lambda = \Lambda_{\lambda, \nu, j}$ be hyperbolic block.

\begin{lemma} \label{constant on invariants}
Let $\mu$ be a hyperbolic measure, and let $\varphi \in L^1(\mu)$ be an $f$-invariant function. Then there is a set $N_\varphi \subset M$ such that 
\begin{enumerate}
    \item $\mu(N_\varphi) = 0$,
    \item For any $w \in \Lambda$, any $x, y\in V^s(w) \backslash N_\varphi$, we have $\varphi(x) = \varphi(y)$,
    \item For any $w \in \Lambda$, any $x, y\in V^u(w) \backslash N_\varphi$, we have $\varphi(x) = \varphi(y)$.
\end{enumerate}

\end{lemma}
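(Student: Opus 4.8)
The plan is to exploit the fact that an $f$-invariant $L^1$ function is, by the Birkhoff ergodic theorem, a.e. equal to the limit of Birkhoff averages, and that these averages behave well along stable and unstable manifolds because of the contraction/expansion. First I would invoke Birkhoff: for $\mu$-a.e. $x$ the forward average $\varphi^+(x) := \lim_{n\to\infty}\frac1n\sum_{k=0}^{n-1}\varphi(f^k x)$ exists and equals $\varphi(x)$ a.e., and similarly the backward average $\varphi^-(x)$ exists a.e.\ and equals $\varphi(x)$ a.e.; so $\varphi = \varphi^+ = \varphi^-$ off a null set $N_0$. The subtlety is that $\varphi$ is merely $L^1$, not continuous, so I cannot directly say $\varphi(f^k x)$ and $\varphi(f^k y)$ are close when $d(f^kx,f^ky)\to 0$. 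The standard fix is to first reduce to bounded continuous $\varphi$: given the original $\varphi \in L^1(\mu)$, approximate it in $L^1$ by a sequence of continuous functions $\psi_m$, prove the statement for each $\psi_m$ (where the Birkhoff-average argument along manifolds works cleanly), and then pass to the limit. This is exactly the kind of routine measure-theoretic patching that the paper defers to its appendix, so I would lean on it.

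For the continuous case, fix $w\in\Lambda$ and $x,y\in V^s(w)$. By the defining property of the local stable manifold, $d(f^k x, f^k y)\to 0$ as $k\to+\infty$; combined with uniform continuity of $\psi_m$ on the compact manifold $M$, this gives $\frac1n\sum_{k=0}^{n-1}\bigl(\psi_m(f^k x)-\psi_m(f^k y)\bigr)\to 0$, hence $\psi_m^+(x)=\psi_m^+(y)$ whenever both forward averages exist. Symmetrically, for $x,y\in V^u(w)$ one uses $d(f^{-k}x,f^{-k}y)\to 0$ and the backward averages $\psi_m^-$. Since for an invariant function $\psi_m^+=\psi_m=\psi_m^-$ $\mu$-a.e., there is a null set $N_m$ off which the equalities hold; but to conclude for \emph{all} $x,y$ on a given stable (resp. unstable) leaf away from a null set, I need the exceptional set to be a union of whole leaves modulo null, which is where the hyperbolic structure and absolute continuity of the holonomies (or, in our setting, the local product structure) enter: a set that is null for $\mu$ and ``saturated enough'' meets $\mu$-a.e. leaf in a set that is null for the conditional measure. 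Concretely I would build $N_\varphi$ as the union over $m$ of $N_m$ together with the set where some Birkhoff average fails to exist, and then argue that, after enlarging by a further null set, its intersection with $\mu$-a.e.\ stable and unstable leaf is null in the conditional measure; restricting to the complement gives exactly properties (2) and (3), while property (1) is immediate since a countable union of null sets is null.

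The main obstacle is precisely this last point: turning the a.e.\ statement ``$\varphi^+=\varphi$'' into a statement that holds for \emph{every} pair of points on a full-measure family of leaves, rather than merely for a.e.\ point. This requires transferring the null set $N_\varphi$ from the ambient measure $\mu$ to the conditional measures on stable and unstable leaves and back, which is exactly what a Fubini-type argument over the local product structure (e.g.\ via lemma \ref{density lemma} and the disintegration/Fubini lemmas collected in the appendix) provides; the expansion/contraction estimates needed to compare Birkhoff sums are quantitatively trivial once $\psi_m$ is continuous, so essentially all the real work is this measure-theoretic bookkeeping. I would carry it out by first stating the leaf-wise null-set transfer as a clean sublemma (or citing the appendix), then assembling the three pieces in the order: Birkhoff $\Rightarrow$ reduce to continuous $\Rightarrow$ compare averages along leaves $\Rightarrow$ patch the null sets.
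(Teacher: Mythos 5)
Your core mechanism is exactly the paper's: approximate the invariant $\varphi$ by continuous functions $\psi_m$ in $L^1$, observe that for a continuous function the forward (resp.\ backward) Birkhoff average is constant along stable (resp.\ unstable) leaves wherever it exists, use $\|\varphi-\psi_m^+\|_{L^1}\le\|\varphi-\psi_m\|_{L^1}$ (which follows from the invariance of $\varphi$ and $\mu$), pass to a subsequence to get $\varphi=\lim_k\psi_{m_k}^+$ pointwise $\mu$-a.e., and take $N_\varphi$ to be the countable union of the relevant null sets. Two remarks. First, a small slip: $\psi_m$ is not invariant (only $\varphi$ is), so the correct statement is $\psi_m^+=\psi_m^-=\bar\psi_m$ a.e.\ by Birkhoff, not $\psi_m^+=\psi_m$; and the passage from $L^1$-convergence of $\psi_m^+$ to pointwise convergence requires extracting a subsequence, which you should make explicit since the pointwise identity $\varphi=\lim\psi_{m_k}^+$ off a null set is precisely what makes $\varphi$ leafwise constant off $N_\varphi$.

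Second, and more importantly, the ``main obstacle'' you identify is not actually present in this lemma, and your proposed resolution would be both unnecessary and unavailable. The lemma asserts the existence of a single \emph{ambient} null set $N_\varphi$ such that $\varphi$ is constant on $V^s(w)\setminus N_\varphi$ for every $w\in\Lambda$; it does \emph{not} assert that $N_\varphi$ meets a.e.\ leaf in a set that is null for the conditional measure. The former follows immediately from the construction: each $\psi_{m_k}^+$ is genuinely constant on each entire stable leaf minus its own null set $N_{\psi_{m_k}}$, and $\varphi$ agrees with the limit off a further null set, so no Fubini-type transfer between $\mu$ and the conditional measures is needed. Indeed the lemma assumes only that $\mu$ is hyperbolic, not that it has local product structure, so invoking absolute continuity of holonomies here would import a hypothesis the statement does not grant. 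The leafwise statement you are worried about (that $N_\varphi$ intersects a.e.\ unstable leaf in a $\mu_{V^u_R(x)}$-null set) is handled separately in the proof of Lemma \ref{ergodicity of Q}, where it follows from the Rokhlin disintegration alone, and it is only there that the product structure enters, to push the exceptional set across stable holonomies.
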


\begin{proof}
Let $\phi:M \longrightarrow \mathbb{R}$ be a continuous function. Define 
\begin{gather*}
\phi^+(x) := \lim _{n \rightarrow \infty}\frac1n \sum^{n-1}_{k=0} \phi(f^k(x)), \\
\phi^-(x) := \lim _{n \rightarrow \infty}\frac1n \sum^{n-1}_{k=0} \phi(f^{-k}(x)), \\
\bar\phi(x) := \lim _{n \rightarrow \infty}\frac1{2n-1} \sum^{n-1}_{k=-(n-1)} \phi(f^k(x)). 
\end{gather*}
By Birkhoff Ergodic Theorem, there is a set $N_\phi \subset M$ such that $\mu(N_\phi)= 0$, and for any $x \in M \backslash N_\phi$, $\phi^+(x)$, the function $\phi^-(x)$ and $\bar \phi(x)$ exist and are equal to each other. Now, notice that because of the continuity of $\phi$ and the fact that for any $x, y \in V^s(w) \backslash N_\phi$, we have $d(f^n(x), f^n(y)) \longrightarrow 0$ as $n \rightarrow \infty$ 
$$
\bar \phi(x) = \phi^+(x) = \phi^+(y) = \bar \phi(w).
$$
Now, by the density of continuous function in $L^1(\mu)$, we can find a sequence $(\phi_k)_k$ of continuous functions such that $\| \varphi - \phi_k \|_{L^1(\mu)} < \frac1k$. By the invariance of $\mu$,and  $\varphi$, we have 
$$
\| \varphi - \phi_k  \circ f^i\|_{L^1(\mu)} = \| \varphi \circ f^i - \phi_k  \circ f^i\|_{L^1(\mu)}  = \| \varphi - \phi_k \|_{L^1(\mu)}< \frac1k
$$
and hence, we have 
$$
\left \| \varphi - \frac1n \sum_{i=0}^{n-1}\phi_k  \circ f^i \right\|_{L^1(\mu)} < \frac1k.
$$
Applying Lebesgue convergence theorem, we obtain 
$$
\| \varphi - \phi^+_k \|_{L^1(\mu)} < \frac1k.
$$

Hence, we can find a subsequence $\phi_{n_k}$ such that $\varphi(x) = \lim \limits_{k \rightarrow \infty} \phi_{n_k}(x)$ for $\mu$-a.e. $x \in M$. Setting $N_{\varphi} =  \bigcup_{k\geq 1} N_{\phi_{n_k}} $, we see that this set satisfies all the conditions of the lemma. 
\end{proof}

Now, Let $\mu$ be a hyperbolic measure with local product structure, $\Lambda$ be a hyperbolic set for $f$ such that $\mu(\Lambda) > 0$, and choose $\ell > 0$ large enough so that $\mu(\Lambda^\ell)> 0$. Let $w \in \Lambda^\ell$ be a Lebesgue density point for the restriction of $\mu$ to $\Lambda^\ell$, and let $R$ be a rectangle at $w$ such that $\mu(R)>0$, we define 
$$
Q:= \bigcup_{n \in \mathbb{Z}} f^n(R).  
$$

\begin{lemma}\label{ergodicity of Q}
When $R$ and $Q$ are chosen as above, then $(f|_Q, \mu_Q)$ is ergodic. Here $\mu_Q$ is the normalized restriction of $\mu$ to $Q$.
\end{lemma}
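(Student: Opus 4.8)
The plan is to run the classical Hopf argument, with the local product structure of $\mu$ playing the role that absolute continuity of the holonomies plays in the smooth/SRB setting. It suffices to show that every $\varphi\in L^1(\mu_Q)$ that is invariant under $f|_Q$ is $\mu_Q$-a.e.\ constant. Extending such a $\varphi$ by $0$ on $M\setminus Q$ and using $f(Q)=Q$ gives an $f$-invariant function in $L^1(\mu)$, so Lemma \ref{constant on invariants} produces a $\mu$-null set $N_\varphi$ outside of which $\varphi$ is constant along each local stable manifold and along each local unstable manifold.

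Next I would localize to the rectangle $R\subset\Lambda^\ell$. Fix $p\in R$ in the full-$\mu_R$-measure set on which: the product measure $\mu^{\mathcal{P}}_{p,R}$ is defined and equivalent to $\mu_R$ (Lemma \ref{density lemma}); the conditional measure on the unstable leaf satisfies $\mu_{V^u_R(p)}(N_\varphi)=0$ (automatic for a.e.\ leaf by Rokhlin's disintegration theorem, since $\mu_R(N_\varphi)=0$); and $p\notin N_\varphi$. For $x\in R$ set $z(x):=V^s_R(x)\cap V^u_R(p)\in V^u_R(p)$, the stable-holonomy image of $x$ onto the unstable leaf of $p$, which lies in $R$ because $R$ is a rectangle.

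The crucial step is that $z^{-1}(N_\varphi)$ is $\mu_R$-null. Matching the abstract conventions to the present setting ($\xi_{hor}=V^u_R$, $\xi_{ver}=V^s_R$, the ``vertical'' holonomy being the stable holonomy $\pi^s$), the coordinate map $\pi_p:V^u_R(p)\times V^s_R(p)\to R$ coming from the local product structure satisfies $z\circ\pi_p=\mathrm{pr}_1$; hence $z^{-1}(N_\varphi)$ corresponds in product coordinates to $(N_\varphi\cap V^u_R(p))\times V^s_R(p)$, whose $\mu^{\mathcal{P}}_{p,R}$-measure equals $\mu_{V^u_R(p)}(N_\varphi\cap V^u_R(p))=0$. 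By the equivalence $\mu_R\sim\mu^{\mathcal{P}}_{p,R}$ this forces $\mu_R(z^{-1}(N_\varphi))=0$. This is exactly the point where the product structure is indispensable: a bare disintegration of $\mu_R$ along stable leaves would not guarantee that the quotient measure on the space of stable leaves is equivalent to the conditional on $V^u_R(p)$, and the gluing argument would fail.

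To finish, for $\mu_R$-a.e.\ $x\in R$ one has both $x\notin N_\varphi$ and $z(x)\notin N_\varphi$; since $x$ and $z(x)$ lie on a common local stable manifold, Lemma \ref{constant on invariants}(2) gives $\varphi(x)=\varphi(z(x))$, and since $z(x)\in V^u_R(p)$ with $\mu_{V^u_R(p)}$-a.e.\ point of that leaf off $N_\varphi$, Lemma \ref{constant on invariants}(3) shows $\varphi$ takes a single value $c$ on $V^u_R(p)\setminus N_\varphi$. Hence $\varphi\equiv c$ on a set of full $\mu_R$-measure, hence of full $\mu|_R$-measure; applying powers of $f$ and using the invariance of $\varphi$ together with $Q=\bigcup_n f^n(R)$ yields $\varphi\equiv c$ $\mu_Q$-a.e., which proves ergodicity. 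The main obstacle is precisely the verification that $z$ is a coordinate projection in the product coordinates and therefore that $z^{-1}(N_\varphi)$ is null; the rest is routine bookkeeping with the Hopf argument and with the identification of the abstract partitions $\xi_{hor},\xi_{ver}$ with the unstable and stable ones.
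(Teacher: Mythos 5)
Your proposal is correct and follows essentially the same route as the paper: apply Lemma \ref{constant on invariants} to an invariant function, project $R$ onto the single unstable leaf $V^u_R(p)$ via stable holonomy, use the equivalence $\mu_R\sim\mu^{\mathcal{P}}_{p,R}$ to see that the stable saturation of $N_\varphi\cap V^u_R(p)$ (your $z^{-1}(N_\varphi)$, the paper's $N^s_{\varphi,R}$) is $\mu_R$-null, and then spread constancy from $R$ to $Q$ by invariance. The only cosmetic difference is that you make explicit the extension of $\varphi$ by zero off $Q$, which the paper leaves implicit.
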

\begin{proof}
\textbf{Step 1.} Let $\varphi \in L^1(\mu)$ be an $f$-invariant, and $N_\varphi$ the set from lemma  \ref{constant on invariants}. Fix a point $p \in R$, for which we have 
\begin{enumerate}
    \item $\mu_{V^u_R(p)}\left(V^u_R(p) \cap N_\varphi \right) = 0$,
    
    \item The product measure $\mu^{\mathcal{P}}_{p, R}$ is equivalent to the measure $\mu_R$.
\end{enumerate}
We can choose such a point because both properties are satisfied for $\mu_R$-a.e. point in $R$. Next, define the set 
$$
N_{\varphi, R}^s := \bigcup_{x \in V^u_R(p) \cap N_\varphi}V^s_R(x).
$$
Then, since $\mu^{\mathcal{P}}_{p, R}\left(N^s_{\varphi, R} \right) = 0$, we also have $\mu_{R}\left(N^s_{\varphi, R} \right) = 0$.  Now, take two points $z_1, z_2 \in R \backslash \left(N_{\varphi} \cup N^S_{\varphi, R} \right)$, and let $y_i := \pi^s_{z_i, p}(z_i)$. Then, by the choice of $z_1$ and $z_2$, we have $y_1, y_2 \in V^u_{R}(p) \backslash N_\varphi$. Applying lemma \ref{constant on invariants}, we get 
$$
\varphi(z_1) = \varphi(y_1) = \varphi(y_2) = \varphi(z_2).
$$

\textbf{Step 2.} Let us define the invariant sets 
$$
N_{\varphi, \mathbb{Z}} := \bigcup_{n \in \mathbb{Z}} f^n N_{\varphi} \mbox{ and } N^s_{\varphi, \mathbb{Z}}:= \bigcup_{n \in \mathbb{Z}} f^n N^s_{\varphi, R}. 
$$
Then, since $\mu\left(N_{\varphi} \right)=0$ and  $\mu \left( N^s_{\varphi, R}\right) = 0$, we get 

$$
\mu \left(N_{\varphi, \mathbb{Z}} \right) = 0 \mbox{ and } \mu\left( N^S_{\varphi, \mathbb{Z}}\right) = 0.
$$ 
Now, let us take $z_1, z_2 \in Q \backslash \left( N_{\varphi, \mathbb{Z} } \cup N^s_{\varphi, \mathbb{Z}} \right)$, then there are $n_1, n_2 \in \mathbb{Z}$ and $z_1', z_2' \in R \backslash \left(N_{\varphi} \cup N^s_{\varphi, R} \right)$ such that $z_1 = f^{n_1}(z_1')$ and $z_2 = f^{n_2}(z_2')$. By step 1 and the invariance of $\varphi$, we get 
$$
\varphi(z_1) = \varphi(z_1') = \varphi(z_2') = \varphi(z_2).
$$
Therefore, the function $\varphi$ is constant $\mu$-a.e. on $Q$, which implies that $(f|_Q, \mu_Q)$ is ergodic. 
\end{proof}
To complete the proof of Theorem  \ref{ergodic components}, we first notice that for any two rectangle $R_1$ and $R_2$ of positive $\mu$ measure, the associated sets $Q_1$ and $Q_2$ are either identical up to a set of zero $\mu$-measure, or we have $\mu(Q_1 \cap Q_2) = 0$. This follows from the invariance of $Q_1$ and $Q_2$ and from the lemma \ref{ergodicity of Q}. Now when $\mu(\Lambda^\ell) > 0$, we know that $\mu$-a.e. $w \in \Lambda^\ell$ is a Lebesgue density point. Hence by lemma \ref{rectangle at lebesgue} and the fact that $\Lambda = \bigcup_{\ell >1} \Lambda^\ell$, we can find at most countably many points $w_1, w_2, \dots \in \Lambda$ and rectangles $R_1, R_2, \dots$ at these points, such that the each of the associated sets $Q_1, Q_2, \dots $ has positive $\mu$ measure, and such that 
\begin{enumerate}
    \item $\mu(\Lambda \backslash \bigcup_{n \geq 1} Q_n) = 0$.
    \item $\mu(Q_i \cap Q_j) = 0 $, whenever $i \neq j$.
    \item $(f|_{Q_n}, \mu_{Q_n})$ is ergodic.
\end{enumerate}
To finish the proof of theorem \ref{ergodic components}, we define 
$$
\Lambda_n := Q_n \backslash \bigcup_{k=1}^{n-1} Q_k.
$$
Then, $\Lambda_1, \Lambda_2, \dots$ form a partition of $\Lambda$, which satisfies all the conditions required in the theorem.

\end{section}

\begin{section}{Pinsker Partition and K-Property}
In this section, we will prove theorem \ref{K Property}. Let $\xi^s$ and $\xi^u$ be two partitions which are subordinate to the stable and unstable manifolds respectively. Such partitions can be constructed for any measure $\mu$, with at least one non-zero Lyapunov exponent. Let $\pi(f,\mu)$ be the Pinsker partition, this is the partition whose associated $\sigma$-algebra is the maximal $\sigma$-algebra of sets $A$, such that the partitions $\left\{A, m\backslash A \right\}$ have zero entropy (see \cite{P} and \cite{BP}[chapter 9.4] for a complete definition). Let $W^+$ and $W^-$ be the partition into global unstable and stable manifold. We denote by $\mathcal{H}(\xi)$, the measurable hull of the partition $\xi$. The following theorem can found in \cite{BP}[Theorem 9.4.1] 

\begin{prop}
Let $f:M \longrightarrow M$ be a $C^{1+\alpha}$ diffeomorphism, and $\mu$ an invariant measure with at least one non-zero Lyapunov exponent, then there is a partition $\xi^s$ subordinate to the stable manifolds, which satisfies the following properties: 
\begin{enumerate} \label{subordinate partitions}
    \item $\xi^s(x)$ is an open subset of $W^s(x)$, for $\mu$-a.e. $x \in M$.
    \item $\bigvee_{n \geq 0} f^n \xi^s  = \epsilon$ is the partition by points.
    \item $f \xi^s \geq \xi^s$. 
    \item $\bigwedge_{n \geq 0} f^{-n} \xi^s = W^-$, the partition by global stable manifolds.
\end{enumerate}
Furthermore, we have that $\pi(f, \mu) = \mathcal{H}(W^-)$. Similar statements hold for unstable manifolds. 
\end{prop}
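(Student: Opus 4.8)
I would reconstruct the classical Ledrappier--Strelcyn/Pesin argument; this is \cite{BP}[Theorem 9.4.1], but I sketch the architecture. It splits into two essentially independent parts: the \emph{construction} of a measurable partition $\xi^s$ satisfying properties 1--4, and the \emph{soft} identification $\pi(f,\mu)=\mathcal{H}(W^-)$, which then follows from 2, 3, 4 by a purely measure-theoretic principle.

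\emph{Construction of $\xi^s$.} First I would fix a Pesin regular set $\Lambda^\ell$ with $\mu(\Lambda^\ell)>0$ on which the local stable manifolds $V^s_{loc}(x)$ have size bounded below by some $r_\ell>0$ and depend continuously on $x$. Pick a Lebesgue density point $x_0\in\Lambda^\ell$ of $\mu|_{\Lambda^\ell}$ and a radius $r\ll r_\ell$, and form the box $\hat A:=\bigcup_{x\in\Lambda^\ell\cap B(x_0,r)}V^s_r(x)$, the saturation of $\Lambda^\ell\cap B(x_0,r)$ by stable plaques of size $r$. Define $\xi^s$ to be the measurable partition whose atom at $z$ is $V^s_r(z)\cap\hat A$ when $z\in\hat A$, refined on the complement so that $f\xi^s\ge\xi^s$ holds by construction (concretely, one takes the partition generated by all $f^{-n}$, $n\ge 0$, of the plaques $V^s_r(x)\cap\hat A$, adjoining the points of $M\setminus\bigcup_{n\ge0}f^{-n}\hat A$). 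Property 1 is immediate since plaques are open in their leaves, and property 3 is built in. For property 2: the atom of $x$ lies in $W^s(x)$ and has leaf-radius at most $r$, so $f^n\xi^s(x)=f^n(\xi^s(f^{-n}x))$ has leaf-radius tending to $0$ because $f$ contracts $W^s$, while subordination already separates points on distinct stable manifolds; hence $\bigvee_{n\ge0}f^n\xi^s=\epsilon$. For property 4 one argues in the opposite direction: the atom of $\bigwedge_{n\ge0}f^{-n}\xi^s$ at $x$ is the increasing union $\bigcup_{n\ge0}f^{-n}(\xi^s(f^nx))$, and by Poincar\'e recurrence $\mu$-a.e.\ $x$ returns infinitely often to the core of $\hat A$, where $\xi^s(\cdot)$ contains a definite-size plaque; pulling such a plaque back by $f^{-n}$ expands it without bound, so the union exhausts $W^s(x)$. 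Thus $\bigwedge_{n\ge0}f^{-n}\xi^s$ has global stable manifolds as atoms $\mu$-a.e., i.e.\ it coincides with $W^-$ mod $0$.

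\emph{Identification of the Pinsker partition.} Any measurable partition that is increasing ($f\xi^s\ge\xi^s$) and generating ($\bigvee_{n\ge0}f^n\xi^s=\epsilon$) has the property that its remote past realizes the maximal zero-entropy factor, $\pi(f,\mu)=\bigwedge_{n\ge0}f^{-n}\xi^s$; this is Rokhlin's criterion (see \cite{P} and \cite{BP}[Chapter 9.4]; the underlying identity is $h_\mu(f)=H(f\xi^s\mid\xi^s)$, whence $\bigwedge_{n\ge0}f^{-n}\xi^s$ has zero entropy and is maximal such). Combined with property 4 this gives $\pi(f,\mu)=\mathcal{H}(W^-)$. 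Running the entire argument for $f^{-1}$ exchanges stable and unstable and yields the analogue with $W^+$.

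\emph{Main obstacle.} The delicate point is reconciling properties 1--3 with property 2: one is asking a partition whose atoms are transversally null (plaques of stable leaves) to separate every pair of points of $M$ under forward iteration. The resolution is that $\mu$-a.e.\ point eventually falls into $\hat A$ — so along its orbit the partition elements become honest plaques — together with the exponential contraction of $f$ along $W^s$. A secondary technical point is verifying that $\xi^s$ as described is genuinely a measurable partition (countably generated mod $0$) and that the core of $\hat A$ used in the proof of property 4 has positive measure; both follow from building $\hat A$ around a density point and from the continuous dependence of $V^s_{loc}$ on $\Lambda^\ell$.
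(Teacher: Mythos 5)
The paper itself offers no proof of this proposition: it is stated as a citation of \cite{BP}[Theorem 9.4.1], so there is nothing internal to compare your argument against. Your reconstruction follows the standard Ledrappier--Strelcyn line (saturate a neighbourhood of a density point of a Pesin block by stable plaques, take $\bigvee_{n\ge 0}f^{-n}$ of the plaque partition, use recurrence plus nonuniform contraction for the generating and exhausting properties), and the architecture is essentially the correct one.

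There is, however, one genuinely false step in the ``soft'' half. You assert that \emph{any} partition with $f\xi\ge\xi$ and $\bigvee_{n\ge0}f^n\xi=\epsilon$ has $\bigwedge_{n\ge0}f^{-n}\xi=\pi(f,\mu)$. This is not Rokhlin's theorem and it is false: the point partition $\epsilon$ itself is exhaustive in this sense, yet its tail is $\epsilon$, not the Pinsker partition, whenever $h_\mu(f)>0$. The Rokhlin--Sinai theorem only gives $\bigwedge_{n\ge0}f^{-n}\xi\ge\pi(f,\mu)$ for exhaustive $\xi$; equality holds precisely for \emph{perfect} partitions, i.e.\ those additionally satisfying $h_\mu(f)=H(f\xi\mid\xi)$ (together with the vanishing of the entropy of the tail). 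For subordinate partitions that entropy identity is exactly the Ledrappier--Strelcyn/Ledrappier--Young theorem \cite{LY}, and it is the substantive input to the identification $\pi(f,\mu)=\mathcal{H}(W^-)$ --- you mention it only parenthetically, as if it were a routine consequence rather than the load-bearing step. A secondary caveat: $W^-$ is in general not a measurable partition, so both property 4 and the Pinsker identification must be read through measurable hulls (as the paper's own notation $\mathcal{H}(W^-)$ indicates); your argument that $\bigcup_{n}f^{-n}\bigl(\xi^s(f^nx)\bigr)$ exhausts $W^s(x)$ should be phrased mod $0$ accordingly.
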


To prove theorem \ref{K Property},  we will show that for a hyperbolic measure with local product structure, which is ergodic, the Pinsker partition $\pi(f, \mu)$ actually has an atom of positive measure. Since the Pinsker partition is invariant under $f$, this implies that  $\pi(f, \mu)$ has finitely many atoms of positive $\mu$ measure, whose union has full $\mu$ measure. This concludes the proof of the theorem, since these atoms are permuted by $f$, so replacing $f$ by $f^p$, where $p$ is the order of the generated permutation on the atoms of $\pi(f, \mu)$, we see that the restriction of $f$ to any such atom is now a $K$-automorphism, given that $h_u(f) > 0$.  \\ 

Before we start the proof, let us start with some definitions.

\begin{definition}\label{equivelence of partitions}
Let $(X, \mathcal{F}, \mu)$ be a Lebesgue probability space. Given any two partitions $\xi$ and $\eta$ of $X$ , we say that $\xi \leq \eta$ if each partition element of $\eta$ is contained in a partition element of $\xi$. We say that $\xi \leq \eta \mod \mu$ if there is a measurable subset $X_0 \subset X$ such that $\mu(X_0) = 1$, and such that $\xi | X_0 \leq \eta |X_0 $. In particular, we say that $\xi = \eta \mod \mu$ if $\xi \leq \eta \mod \mu$ and $\eta \leq \xi \mod \mu$. 
\end{definition}

From now on, given a $\sigma$-subalgebra $\mathcal{F'} \subset \mathcal{F}$, we denote by $\Xi \left(\mathcal{F'} \right )$ the partition generated by $\mathcal{F'}$. This partition can be constructed as follows:  the probability space $(X, \mathcal{F}', \mu)$ is separable, a fact that follows from the separability of $(X, \mathcal{F}, \mu)$. Now, take a countable sequence of sets $\left(A_n \right)_{n\geq 1}$ in $\mathcal{F}'$, which is dense in $\mathcal{F}'$ with respect to the metric $d_\mu \left(A, B \right) = \mu \left(A \Delta B \right)$. We define the sequence of finite partitions 
$$
\xi_n := \bigvee_{k=1}^n \left \{A_k, X \backslash A_k \right\}.
$$
Notice that by definition, we have that $\xi_1 \leq \xi_2 \leq \dots$. we define $\Xi \left (\mathcal{F}' \right)$ by 
$$
\Xi \left(\mathcal{F}' \right) = \bigvee_{n=1}^\infty \xi_n.
$$
The fact that $\mathcal{F}\left(\Xi \left( \mathcal{F}'\right) \right)$ is equivalent to the $\sigma$-algebra $\mathcal{F}'$ follows from the fact that the union of the $\sigma$-algebra $\bigcup_{n \geq 1} \mathcal{F}\left( \xi_n\right)$ generates $\mathcal{F}'$.  

\begin{definition}
Let $\xi$ be a partition of a Lebesgue probability space $(X,\mathcal{F}, \mu)$. We denote by $L^2_\mu \left(X, \xi \right)$ the subspace of $L^2_\mu(X)$ of functions measurable with respect to the the completion of the $\sigma$-algebra $\Xi \left( \xi \right)$. 
\end{definition}

\begin{lemma}[\cite{Ro2}]
Let $\xi$ and $\eta$ be two measurable partitions be a Lebesgue probability space $(X, \mathcal{F}, \mu)$. Then the following are equivalent 
\begin{enumerate}
    \item $\xi = \eta \mod \mu$. 
    
    \item The completions of the $\sigma$-subalgebras $\Xi\left( \xi \right)$ and $\Xi \left(\eta \right)$ are equal. 
    
    \item $L^2_\mu \left(X, \xi \right) = L^2_\mu \left(X, \eta \right)$.
\end{enumerate}
\end{lemma}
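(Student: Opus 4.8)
The plan is to settle the routine equivalence $(2)\Leftrightarrow(3)$ first and then to treat $(1)\Leftrightarrow(2)$, where the Lebesgue--space hypothesis actually does its work. Write $\mathcal{F}_\xi$ for the (complete) $\sigma$-subalgebra generated by $\xi$, so that $L^2_\mu(X,\xi)$ is by definition the space of $\mathcal{F}_\xi$-measurable functions in $L^2_\mu(X)$. For $(2)\Rightarrow(3)$ there is nothing to do: if $\mathcal{F}_\xi=\mathcal{F}_\eta$ after completion, then the two spaces of square-integrable measurable functions literally coincide. For $(3)\Rightarrow(2)$, given $A\in\mathcal{F}_\xi$ the indicator $\mathbf{1}_A$ lies in $L^2_\mu(X,\xi)=L^2_\mu(X,\eta)$, hence agrees $\mu$-a.e.\ with an $\mathcal{F}_\eta$-measurable function; since $\mathcal{F}_\eta$ is complete this forces $A\in\mathcal{F}_\eta$, and symmetry gives equality of the completions. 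This is the standard correspondence between complete sub-$\sigma$-algebras of a probability space and the closed subspaces of $L^2$ they cut out.

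Next I would prove $(1)\Rightarrow(2)$. Using Definition \ref{equivelence of partitions}, fix a set $X_0$ with $\mu(X_0)=1$ and $\xi|X_0=\eta|X_0$. A generic element of $\mathcal{F}_\xi$ coincides modulo a null set with a union of atoms of $\xi$; intersecting with $X_0$ turns such a union into a union of atoms of $\eta$, hence into an element of $\mathcal{F}_\eta$ up to a null set. Thus $\mathcal{F}_\xi$ is contained in $\mathcal{F}_\eta$ modulo $\mu$, and the reverse inclusion is the identical argument, so the completions agree.

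The substantive direction is $(2)\Rightarrow(1)$, and it is here that the hypothesis that $(X,\mathcal{F},\mu)$ is a Lebesgue space is used essentially: on such a space a measurable partition is recovered, up to a set of measure zero, from the $\sigma$-algebra it generates. Concretely, choose countable families $(A_n)\subset\mathcal{F}_\xi$ and $(B_n)\subset\mathcal{F}_\eta$ dense in the $d_\mu$-metric, exactly as in the construction of $\Xi(\cdot)$ preceding the lemma, so that $\xi=\bigvee_n\{A_n,X\setminus A_n\}$ and $\eta=\bigvee_n\{B_n,X\setminus B_n\}$ modulo $\mu$. Since the completions of $\mathcal{F}_\xi$ and $\mathcal{F}_\eta$ coincide, each $A_n$ differs by a null set from a set in $\mathcal{F}_\eta$ and each $B_n$ from a set in $\mathcal{F}_\xi$; discarding the countable union of these exceptional null sets, the two generating families generate the same $\sigma$-algebra of subsets of the remaining full-measure set and therefore cut it into the same atoms. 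Reading this back through the identification of $\xi$ and $\eta$ with the atom partitions of their $\sigma$-algebras yields $\xi=\eta \mod \mu$. The one delicate point — and the place I expect to spend effort — is checking that passing to the completion does not change the atom partition modulo $\mu$, i.e.\ that $\Xi(\mathcal{F}_\xi)$ and the atom partition of the completed algebra agree mod $\mu$; this is precisely Rokhlin's structure theorem for Lebesgue spaces, and once it is invoked the remainder is careful bookkeeping of the countably many exceptional null sets rather than anything new.
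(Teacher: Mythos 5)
The paper offers no proof of this lemma at all: it is stated as a citation to Rokhlin \cite{Ro2}, so there is nothing to compare your argument against. On its own terms your proposal is correct. The equivalence $(2)\Leftrightarrow(3)$ is handled exactly as it should be (the only point worth stating explicitly is that completeness of the sub-$\sigma$-algebra is what lets you upgrade ``agrees a.e.\ with a measurable function'' to ``is measurable''), and $(1)\Rightarrow(2)$ is the straightforward saturation argument. For $(2)\Rightarrow(1)$ you correctly isolate the one nontrivial input, namely Rokhlin's correspondence theorem asserting that on a Lebesgue space a measurable partition is recovered mod $0$ as the atom partition of the complete $\sigma$-algebra it generates; invoking that theorem is entirely legitimate here, since the lemma itself is attributed to Rokhlin and this correspondence is precisely its content in \cite{Ro2}. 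The subsequent bookkeeping (discarding the countable union of exceptional null sets so that the two countable generating families separate the same points, hence each partition refines the other mod $\mu$) goes through as you describe. No gaps.
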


\begin{definition}
For a probability space $(X, \mathcal{F}, \mu)$, we say that measurable subset $A \in \mathcal{F}$ is an \textit{Atom} if A satisfies 
\begin{enumerate}
    \item $\mu(A) > 0$.
    \item if $A' \subset A$ is a measurable set in $\mathcal{F}$, then we either have $\mu(A \Delta A') = 0$ or $A'$ has zero $\mu$ measure. 
\end{enumerate}
\end{definition}

Given a rectangle $R$ at a point $w \in \Lambda^\ell$, such that $\mu(R) > 0$, let us denote the restriction of partitions to this rectangle, by 
\begin{itemize}
    \item $\xi^u_R := \xi^u|_R$,
    \item $\xi^s_R := \xi^s|_R$,
    \item $\pi_R := \pi(f, \mu)|_R$.
\end{itemize}
Observe that $\xi^u_R$ and $\xi^s_R$ refine the partitions $\{V^u_R(x)\}_{x \in R}$ and $\{V^s_R(x)\}_{x \in R}$ of $R$ by local unstable and stable manifolds. To be precise, for $\mu$-a.e. $x \in R$, we have that for $\mu_{V^u(x)}$-a.e. $y \in V^u(x)$, $\xi^u_R(y) \subset V^u(x)$ is an open subset, and similarly for the stable partitions. \\

Our starting point is the fact that $\pi_R \leq \xi^s_R$ and $\pi_R \leq \xi^u_R$ $mod \ \ \mu$, which is a consequence of proposition \ref{subordinate partitions}. This implies that $L^2_\mu(R, \pi_R) \subset L^2_\mu(R,\xi^s_R)$ and $L^2_\mu(R, \pi_R) \subset L^2_\mu(R, \xi^u_R)$, which implies that 

$$
L^2_\mu(R, \pi_R) \subset L^2_\mu(R, \xi^u_R) \cap L^2_\mu(R, \xi^s_R).
$$
our goal is to show that we can find a rectangle $R$ for which we have $\pi_R = \{\emptyset, R\} \mod \mu $.  To show this, it is enough to find a rectangle $R$ for which we have $L^2_\mu(R, \xi^u_R) \cap L^2_\mu(R, \xi^s_R)$ is trivial, that is it consists of $L^2$ functions which are $\mu$-a.e. constant on $R$.  The first step is to observe that from the construction of stable and unstable subordinate partitions $\xi^s$ and $\xi^u$, one can find two $\mu$-measurable functions $r^s, r^u : M \longrightarrow (0, \infty)$, which satisfy 
$$
r^u(x) \leq d(x, \partial \xi^u(X)) 
$$ 
and 
$$
r^s(x) \leq d(x, \partial \xi^s(x)).
$$

Now, for any small $\delta>0$, let us define the measurable sets 
$$
R^u_\delta := \{x \in M \ | \ r^u(x) \geq \delta \},
$$
$$
R^s_\delta := \{x \in M \ | \ r^s(x) \geq \delta \}.
$$
Then, since $\mu(R^u_\delta) \longrightarrow 1 $ as $ \delta \rightarrow 0$, and similarly for $R^s_\delta$, one can find for any $\epsilon>0$ a number $\delta>0$ for which $\mu(R^u_\delta) \geq 1 - \frac{1}{3}\epsilon$ and $\mu(R^s_\delta) \geq 1 - \frac{1}{3}\epsilon$. By taking $\ell>1$ to be large enough, we can also ensure that $\mu(\Lambda^\ell) \geq  1-\frac{1}{3}\epsilon$. Putting all this together, we find that $\mu(\Lambda^\ell \cap R^u_\delta \cap R^s_\delta) > 1 - \epsilon$. Now, let us take a rectangle $R'$ in $\Lambda^\ell$ of radius $\delta^2$, such that we have $\mu(R' \cap R^u_\delta \cap R^s_\delta) > 0 $. Let us now define the rectangle $R \subset R'$ by 
$$
R := \left( \bigcup_{x \in R' \cap R^u_\delta} \xi^u(x)  \right) \bigcap \left( \bigcup_{x \in R' \cap R^s_\delta} \xi^s(x)  \right).
$$
Notice that $R$ is indeed a rectangle, since for $x \in R' \cap R^u_\delta$, we have $ \xi^u_R(x) = V^u_R(x)$ and for each $x \in R' \cap R^s_\delta$ we have $\xi^s_R(x) = V^s_R(x)$. one can easily see that $\mu(R) > 0$. Now if we take $\varphi \in L^2_\mu(R, \xi^u_R) \cap L^2_\mu(R, \xi^s_R)$, all we need to show is that for $\mu$-a.e. $x, y \in R$, we have $\varphi(x) = \varphi(y)$. Define the following sets 
$$
G^u := \left\{ x \in R : \varphi(y) = \varphi(x) \mbox{, for } \mu_{V^u_R(x)}\mbox{-a.e. } y \in V^u_R(x)   \right\},
$$

$$
G^s := \left\{ x \in R : \varphi(y) = \varphi(x) \mbox{, for } \mu_{V^s_R(x)}\mbox{-a.e. } y \in V^s_R(x)   \right\}.
$$
By our choice of $\varphi$, these two sets have full $\mu$-measure in $R$. Define the set 
$$
G = \left \{x \in G^u : \mu_{V^u_R(x)} \left(V^u_R(x) \cap G^s\right) = 1 \right \}.
$$
We see that $G$ also has full $\mu$-measure in $R$. Now, we show that $\varphi(x_1) = \varphi(x_2)$ for $\mu$-a.e. $x_1, x_2 \in G$. By the local product structure
$$
\mu_{V^u_R(x_2)} \left(\pi^s_{x_1, x_2}\left(V^u_R(x_1) \cap G^s \right) \cap \left(V^u_R(x_2) \cap G^s \right) \right) = 1 .
$$
Now, the following holds:
\begin{enumerate}
    \item $\varphi(x_1) = \varphi(y_1)$, for $\mu_{V^u_R(x_1}$-a.e. $y_1 \in V^u_R(x_1) \cap G^s$;
    
    \item $\varphi(x_2) = \varphi(y_2)$, for $\mu_{V^u_R(x_1)}$-a.e. $y_2 \in \pi^s_{x_1, x_2}\left(V^u_R(x_1) \cap G^s \right) \cap \left(V^u_R(x_2) \cap G^s \right)$;
    
    \item $\varphi(y_2) = \varphi(\pi^s_{x_2,x_1}(y_2))$, for $\mu_{V^u_R(x_1)}$-a.e. $y_2 \in \pi^s_{x_1, x_2}\left(V^u_R(x_1) \cap G^s \right) \cap \left(V^u_R(x_2) \cap G^s \right)$.
\end{enumerate}
Therefore, 
$$
\varphi(x_1) = \varphi(y_1) = \varphi(y_2) = \varphi(x_2).
$$
\end{section}

\begin{section}{Bernoulli Property}
In the section, we assume that $\mu$ is hyperbolic measure with local product structure and such that $(f, \mu)$ has the K-property. Let $\mathcal{P}$ be a $(\lambda, \mu)$-regular partition ($\lambda$ will be determined later), the main focus of this chapter is to show that the partition $\mathcal{P}$ has the \textit{Very Weak Bernoulli Property}. 
\subsection{Definitions and Results}

\begin{definition}
Let $(X, \mathcal{F}_X, \mu)$ and $(Y,\mathcal{F}_Y,\nu) $ be two Lebesgue spaces. Let $P_i = \left\{ P_1^{(i)}, \dots  P_m^{(i)}\right\}$ and $Q_i = \left\{ Q_1^{(i)}, \dots  Q_m^{(i)}\right\}$, $ 1 \leq i \leq n$, be two sequences of finite partitions of $X$ and $Y$ respectively. We say that $\left( P_i\right)_i \sim \left(Q_i \right)$ if $\mu\left(P_j^{i} \right) = \nu \left(Q_j^{i} \right)$, for all $1 \leq i \leq n$ and $1 \leq j \leq m$. We say that 
$$
\bar d \left(\left( P_i\right)_{i=1}^n, \left( Q_i\right)_{i=1}^n \right) < \epsilon
$$
if there is a Lebesgue space $(Z, \mathcal{F}_Z, \lambda)$, and two sequences of finite partitions $\left( \tilde P_i\right)_{i=1}^n$ and $\left( \tilde Q_i\right)_{i=1}^n$ of $Z$, such that 
$$
\left( P_i\right)_{i=1}^n \sim \left( \tilde P_i\right)_{i=1}^n \mbox{ and } \left( Q_i\right)_{i=1}^n \sim \left(Q_i\right)_{i=1}^n 
$$
and such that 
$$
\frac{1}{n}\sum_{i=1}^n \sum_{j=1}^m \lambda \left( \tilde P_j^{(i)} \Delta \tilde Q_j^{(i)}\right) < \epsilon.
$$
\end{definition}

\begin{definition}[The Very Weak Bernoulli Property] Let $f :M \longrightarrow M$ be an invertible measurable map, which preserves a measure $\mu$, and $\mathcal{P}$ a measurable partition of $M$. We say that $\mathcal{P}$ has the \textit{Very Weak Bernoulli}(or VWB for short) property if the following holds: For every $\epsilon > 0$, there is an $N>0$ such that, for every $n>0$ and $N_2 \geq N_1 > N$, and for $\mu$-$\epsilon$-a.e. $A \in \bigvee_{k=N_1}^{N_2}f^{k}\mathcal{P}$, we have that
$$
\bar{d}\left(\left(f^{-i}\mathcal{P}\right)_{i=0}^{n-1},\left(f^{-i}\mathcal{P}|A\right)_{i=0}^{n-1}  \right) < \epsilon.
$$

\end{definition}

One can characterise the Bernoulli property using the VWB Property, as the following two theorems found in \cite{OW} show.

\begin{theorem}\label{vwb to bernoulli}
If a partition $\mathcal{P}$ of $M$ is VWB with respect to a measure $\mu$, then the system $(M, \bigvee_{k=-\infty}^{\infty}f^k\mathcal{P}, \mu)$ is Bernoulli
\end{theorem}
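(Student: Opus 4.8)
The statement is Ornstein's theorem that the very weak Bernoulli property characterises Bernoullicity, so any proof must reconstruct (a special case of) the Ornstein isomorphism machinery. The plan is to pass through the intermediate notion of a \emph{finitely determined} process: recall that $(f,\mathcal{P})$ is finitely determined (FD) if whenever processes $(T_k,\mathcal{Q}_k)$ on Lebesgue spaces satisfy both $\mathrm{dist}\big((T_k^{-i}\mathcal{Q}_k)_{i=0}^{n-1}\big)\to\mathrm{dist}\big((f^{-i}\mathcal{P})_{i=0}^{n-1}\big)$ for each fixed $n$ and $h(T_k,\mathcal{Q}_k)\to h(f,\mathcal{P})$, one has $\bar d\big((T_k^{-i}\mathcal{Q}_k)_{i=0}^{n-1},(f^{-i}\mathcal{P})_{i=0}^{n-1}\big)\to 0$. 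I would prove the theorem in two steps: (i) VWB $\Rightarrow$ FD; (ii) FD $\Rightarrow$ the generated system $(M,\bigvee_{k}f^k\mathcal{P},\mu)$ is Bernoulli. Together with the hypothesis these give the conclusion, since by definition that system is precisely the one generated by the process $(f,\mathcal{P})$.

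For step (i): fix $\epsilon>0$ and let $N$ be as in the VWB definition. Pick an approximating sequence $(T_k,\mathcal{Q}_k)$ with matching limiting distributions and entropies. For $N_2\ge N_1>N$ the VWB hypothesis gives, on average over the atoms of $\bigvee_{i=N_1}^{N_2}f^i\mathcal{P}$, a uniform $\bar d$-control of the conditional distribution of the $\mathcal{P}$-name on $[0,n)$ relative to the unconditional $n$-block distribution. One then argues that any process whose finite-block distributions and entropy are close enough to those of $(f,\mathcal{P})$ must inherit the same control: equality of the one-block marginals pins down $H(\mathcal{Q}_k)$, and then matching the Kolmogorov--Sinai entropies forces the information that the far block $\bigvee_{i=N_1}^{N_2}T_k^i\mathcal{Q}_k$ carries about $\bigvee_{i=0}^{n-1}T_k^{-i}\mathcal{Q}_k$ to be small, since an entropy deficit is exactly what would let the far block carry extra information. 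Averaging, over the conditioning atoms, the couplings that witness the conditional $\bar d$-bounds then produces a single coupling of the two unconditional $n$-block distributions with small expected normalised Hamming distance, which is the desired $\bar d$-estimate. This is the standard, if fiddly, ``VWB is FD'' argument, carried out in \cite{OW}.

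For step (ii): set $h=h(f,\mathcal{P})<\infty$, which is the entropy of the system $(M,\bigvee_k f^k\mathcal{P},\mu)$ since $\mathcal{P}$ generates it. By Sinai's factor theorem this system has a Bernoulli factor of entropy $h$; using Ornstein's copying (marriage) lemma one then builds inside it a sequence of partitions $\mathcal{P}_m$ with $d_\mu(\mathcal{P}_m,\mathcal{P})\to 0$, each generating under $f$ a Bernoulli process, arranged so that the corresponding sequence of processes is $\bar d$-Cauchy (here the FD property is what converts the distributional/metric closeness into $\bar d$-closeness). Finally one invokes the two classical facts that a $\bar d$-limit of Bernoulli processes is again Bernoulli, and that $\bar d$-convergence of generating processes implies isomorphism of the generated systems in the limit, to conclude that $(f,\mathcal{P})$ generates a Bernoulli system.

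The main obstacle is step (ii): producing the sequence $\mathcal{P}_m$ that is simultaneously close to $\mathcal{P}$ in the partition metric, generates a Bernoulli process, and is $\bar d$-Cauchy requires the full strength of Ornstein's copying lemma together with careful Rokhlin-tower bookkeeping, and the ``$\bar d$-closure of the Bernoulli processes'' theorem is itself substantial. Both are classical but intricate, so rather than reproduce the arguments I would simply quote them from \cite{OW} (and Ornstein's monograph); the content of the present paper lies in verifying the VWB hypothesis for measures with local product structure, not in re-deriving this abstract implication.
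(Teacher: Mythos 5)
Your outline of the Ornstein machinery (VWB $\Rightarrow$ finitely determined $\Rightarrow$ Bernoulli, via Sinai's factor theorem, the copying lemma, and the $\bar d$-closure of the Bernoulli processes) is a correct account of the classical argument, and your decision to quote the substantial steps from \cite{OW} rather than re-derive them is exactly what the paper does: the theorem is stated there as a known result from \cite{OW} with no proof given. There is nothing to compare beyond that, since the paper's contribution lies entirely in verifying the VWB hypothesis, not in this abstract implication.
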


\begin{theorem}\label{bernoulli to bernoulli}
Let $\mathcal{P}_1 \leq \mathcal{P}_2 \leq \dots$ be a sequence of measurable partitions, such that $\bigvee_{i=1}^{\infty}\bigvee_{k=-\infty}^{\infty}f^k\mathcal{P}_i$ is a generating partition, and such that $(M, \bigvee_{k=-\infty}^{\infty}f^k\mathcal{P}, \mu)$ is a Bernoulli system, then $(f, M, \mu)$ is Bernoulli. 
\end{theorem}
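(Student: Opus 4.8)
The plan is to deduce this from Ornstein's isomorphism theory, treating three results of \cite{OW} as black boxes: (i) a finite-valued stationary process is Bernoulli if and only if it is finitely determined; (ii) the finitely determined processes form a $\bar{d}$-closed class; and (iii) the factor theorem — any factor of a Bernoulli system is Bernoulli, so in particular any finite partition measurable for the $\sigma$-algebra of a Bernoulli system generates a Bernoulli (hence finitely determined) process under the transformation. Set $\mathcal{B}_n := \bigvee_{k \in \mathbb{Z}} f^k \mathcal{P}_n$; because $\mathcal{P}_1 \leq \mathcal{P}_2 \leq \cdots$ these are increasing $f$-invariant sub-$\sigma$-algebras, and the generating hypothesis says that $\bigcup_n \mathcal{B}_n$ generates the full Borel $\sigma$-algebra $\mathcal{B}$ of $M$ modulo $\mu$-null sets; by assumption each $(M, \mathcal{B}_n, \mu, f)$ is Bernoulli.

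The heart of the argument is to show that every finite partition $\mathcal{Q}$ of $M$ generates a finitely determined $f$-process. Since $\bigcup_n \mathcal{B}_n$ is an algebra generating $\mathcal{B}$, ordinary measure-theoretic approximation yields finite partitions $\mathcal{Q}_n$, each $\mathcal{B}_n$-measurable, with $d_\mu(\mathcal{Q}, \mathcal{Q}_n) \to 0$, where $d_\mu$ denotes the partition metric. By (iii), each process $(\mathcal{Q}_n, f)$ is a factor of the Bernoulli system $(M, \mathcal{B}_n, \mu, f)$, hence finitely determined. Comparing the $\mathcal{Q}_n$- and $\mathcal{Q}$-processes on the common space $(M,\mu)$ via the identity coupling and using $f$-invariance of $\mu$ gives, for every $N$,
$$
\bar{d}\left(\left(f^{-i}\mathcal{Q}_n\right)_{i=0}^{N-1},\left(f^{-i}\mathcal{Q}\right)_{i=0}^{N-1}\right) \leq d_\mu(\mathcal{Q}_n, \mathcal{Q}),
$$
so $(\mathcal{Q}_n, f) \to (\mathcal{Q}, f)$ in $\bar{d}$; by (ii), $(\mathcal{Q}, f)$ is finitely determined. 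Applying this to a partition that generates $\mathcal{B}$ then finishes the proof: in the setting where the theorem is applied ($f$ a $C^{1+\alpha}$ diffeomorphism of a compact manifold, so $h_\mu(f) < \infty$, with $\mu$ ergodic since each $(M,\mathcal{B}_n,\mu,f)$ is Bernoulli and an increasing limit of ergodic factors is ergodic), Krieger's generator theorem provides a finite generating partition $\mathcal{R}$, and by the above $(\mathcal{R},f)$ is finitely determined, so $(f,M,\mu)$ is Bernoulli by (i). For the abstract statement without those hypotheses one instead takes an increasing sequence of finite partitions generating $\mathcal{B}$, notes each generates an FD process by the above, and invokes the standard form of Ornstein's increasing-limit theorem.

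The substantive mathematics — (i), (ii), and especially the factor theorem (iii) — is imported wholesale from \cite{OW}; the only things to check by hand are the approximation step and the one-line coupling bound on $\bar{d}$, which are routine. The point where a little care is genuinely needed is the passage from ``every finite partition generates an FD process'' to Bernoullicity of the whole system; in the paper's application this is immediate via Krieger's theorem, but in full generality it is exactly the content of Ornstein's increasing-limit theorem and is best left as the cited result.
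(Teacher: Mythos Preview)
Your proof is correct and is essentially the standard argument from Ornstein's isomorphism theory. Note, however, that the paper does not actually prove this theorem: it is stated as one of ``the following two theorems found in \cite{OW}'' and is invoked as a black box, with no proof given in the paper itself. So there is no ``paper's own proof'' to compare against.

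What you have written is a faithful sketch of how the result is established in the Ornstein--Weiss framework: approximate an arbitrary finite partition by $\mathcal{B}_n$-measurable ones, use the factor theorem to see each approximant generates a finitely determined process, pass to the $\bar d$-limit using closure of the FD class, and conclude via Krieger's generator theorem (or, absent finite entropy, via the increasing-limit form of Ornstein's theorem). The only point worth flagging is that you correctly identify the one place where care is needed --- the passage from ``every finite partition is FD'' to ``the full system is Bernoulli'' --- and handle it appropriately for the paper's setting. Your coupling bound $\bar d \le d_\mu$ via the identity joining is the standard one-line estimate. In short: the paper cites the result; you have supplied the (correct, standard) proof.
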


We will need the following lemma (see \cite{CH}). We use this lemma to establish upper bounds on the $\bar{d}$-distance between two sequence of partitions, in terms of $\epsilon$-preserving maps, which also $\epsilon$-preserve the sequence of partitions at hand.

\begin{lemma}\label{dbar distance estimate}
Let $(X, \mu)$ and $(Y, \nu)$ be two non-atomic Lebesgue probability spaces. Let $\left(\alpha_i \right)$ and $\left(\beta_i \right)$ be two sequences of measurable partitions of $X$ and $Y$ respectively. Assume that we have a map $\psi :X \longrightarrow Y$ for which
\begin{enumerate}
    \item $\psi$ is $\epsilon$-measure preserving with respect to $\mu$ and $\nu$;
    
    \item there is a measurable $E \subset X$, such that $\mu(E) < \epsilon$, and for which we have 
    $$
    h(x, \psi(x)) < \epsilon,
    $$
    for any $x \in X \backslash E$, where 
    $$
    h(x, y) = \frac{1}{n} \sum_{\alpha_i(x) \neq \beta_i(y)} 1.
    $$
\end{enumerate}

Then $\bar{d}((\alpha_i), (\beta_i)) < C \epsilon$, where $C>0$ is a universal constant. 
\end{lemma}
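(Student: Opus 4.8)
The plan is to reduce the estimate to producing a single \emph{joining} of $(X,\mu)$ and $(Y,\nu)$ that is cheap for the cost function $h$, and then to manufacture such a joining out of $\psi$. First I would record an elementary reformulation of the $\bar d$-distance: if $\lambda$ is any probability measure on $X\times Y$ with marginals $\mu$ and $\nu$, take $Z=X\times Y$ with the partitions $\tilde\alpha_i=\{A\times Y:A\in\alpha_i\}$ and $\tilde\beta_i=\{X\times B:B\in\beta_i\}$. Then $\tilde\alpha_i\sim\alpha_i$ and $\tilde\beta_i\sim\beta_i$ hold automatically because the marginals of $\lambda$ are $\mu$ and $\nu$, and a direct computation of symmetric differences of ``cylinders'' gives, for each $i$, $\sum_j\lambda(\tilde\alpha_i(j)\,\Delta\,\tilde\beta_i(j))=2\lambda(\{(x,y):\alpha_i(x)\neq\beta_i(y)\})$ (an off-diagonal pair is miscounted exactly twice, once on each side). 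Averaging over $i$ yields $\tfrac1n\sum_i\sum_j\lambda(\tilde\alpha_i(j)\,\Delta\,\tilde\beta_i(j))=2\int_{X\times Y}h\,d\lambda$, so
\[
\bar d\big((\alpha_i),(\beta_i)\big)\ \le\ 2\int_{X\times Y}h\,d\lambda
\]
for every such $\lambda$. It therefore suffices to exhibit one joining $\lambda$ with $\int h\,d\lambda=O(\epsilon)$.

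Second, I would extract an \emph{approximate} joining from $\psi$. Using that $\psi$ is $\epsilon$-measure preserving together with $\mu(E)<\epsilon$, fix a measurable $X_0\subseteq X\setminus E$ with $\mu(X_0)\ge 1-2\epsilon$ on which $\psi$ is injective and $(1-\epsilon)\mu(A)\le\nu(\psi(A))\le(1+\epsilon)\mu(A)$ for every measurable $A\subseteq X_0$. Put $\nu_1:=\psi_*(\mu|_{X_0})$, carried by $Y_0:=\psi(X_0)$; applying the lower bound to $A=X_0\cap\psi^{-1}(B)$ gives $(1-\epsilon)\nu_1\le\nu$. Define the sub-probability measure $\lambda_0:=(1-\epsilon)\,(\mathrm{id},\psi)_*(\mu|_{X_0})$ on $X\times Y$. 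Its first marginal is $(1-\epsilon)\mu|_{X_0}\le\mu$, its second marginal is $(1-\epsilon)\nu_1\le\nu$, its total mass $\lambda_0(X\times Y)=(1-\epsilon)\mu(X_0)\ge 1-3\epsilon$, and because $X_0\cap E=\emptyset$ we get $\int h\,d\lambda_0=(1-\epsilon)\int_{X_0}h(x,\psi(x))\,d\mu(x)<\epsilon$.

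Third, I would repair the marginals by a rank-one correction. Set $\delta:=1-\lambda_0(X\times Y)\in[0,3\epsilon]$, $\mu':=\mu-(1-\epsilon)\mu|_{X_0}\ge 0$ and $\nu':=\nu-(1-\epsilon)\nu_1\ge 0$, both of total mass $\delta$. If $\delta=0$ take $\lambda:=\lambda_0$; otherwise set $\lambda:=\lambda_0+\delta^{-1}\,\mu'\otimes\nu'$. A direct check shows $\lambda$ is a probability measure on $X\times Y$ with marginals exactly $\mu$ and $\nu$ (non-atomic, since $\mu,\nu$ are), and, using $0\le h\le 1$,
\[
\int h\,d\lambda\ \le\ \int h\,d\lambda_0+\delta^{-1}(\mu'\otimes\nu')(X\times Y)\ =\ \int h\,d\lambda_0+\delta\ <\ 4\epsilon .
\]
Feeding this $\lambda$ into the first step gives $\bar d((\alpha_i),(\beta_i))<8\epsilon$, proving the lemma with $C=8$.

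The argument involves no dynamics and no hyperbolicity; the only care required is bookkeeping. The step I expect to be most delicate to state cleanly is the passage from the approximate joining $\lambda_0$ to an exact one — in particular, pinning down the precise form of ``$\epsilon$-measure preserving'' in force so that $\psi$ may be taken injective on $X_0$ (making $\nu_1$ well defined and comparable to $\nu$), and then verifying that the perturbed measure has the prescribed marginals with a controlled total-variation cost. Everything else is routine.
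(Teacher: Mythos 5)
Your proof is correct. Note that the paper does not actually prove Lemma \ref{dbar distance estimate}; it only cites Chernov--Haskell, so there is nothing internal to compare against. Your argument is the standard one in substance: the inequality $\bar d\le 2\int h\,d\lambda$ over joinings $\lambda$, followed by upgrading the graph measure $(\mathrm{id},\psi)_*(\mu|_{X_0})$ to an exact joining by a product-measure correction on the residual masses. All the steps check out — the factor $2$ in the symmetric-difference count, the bound $(1-\epsilon)\nu_1\le\nu$ from the definition of $\epsilon$-measure preservation (which, as you note, does not actually require injectivity of $\psi$ on $X_0$, since only $\psi(A)\subseteq B$ for $A\subseteq\psi^{-1}(B)$ is used), and the marginal and mass computations for the corrected measure $\lambda=\lambda_0+\delta^{-1}\mu'\otimes\nu'$. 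The resulting constant $C=8$ is perfectly acceptable since the lemma only asks for a universal constant.
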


In the next subsection, we will establish the VWB Property for sequences of $(\lambda, \mu)$-regular partitions. To do that, we will use the following characterization of the VWB Property, which holds for $(\lambda, \mu)$-regular partitions 

\begin{lemma} \label{vwb charactarization for lambda mu}
Let $f: M \longrightarrow M$ be a measure preserving. Let $\mathcal{P}$ be a $(\lambda, \mu)$-regular partition. Assume that for any $\epsilon>0$, there exists $N > 0$, such that for any $N_2 \geq N_1 > N$, and for $\mu$-$\epsilon$-a.e $A \in \bigvee_{N_1}^{N_2}f^k\mathcal{P}$, there is an $\epsilon$-measure preserving map 
$$
\theta: A \longrightarrow M
$$
with respect to the measures $\mu_A$ and $\mu$, such that for $\mu_A$-$\epsilon$-a.e. $x \in A$, we have $d(f^n(x), f^n(\theta(x)) < \lambda^{-n} \epsilon$, for all $n \geq 0$. Then $\mathcal{P}$ has the VWB Property. 
\end{lemma}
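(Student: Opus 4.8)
The plan is to deduce the VWB property for $\mathcal{P}$ directly from Lemma~\ref{dbar distance estimate}, feeding that lemma the maps $\theta$ furnished by the hypothesis and using the $(\lambda,\mu)$-regularity of $\mathcal{P}$ to bound the Hamming functional $h$. Fix the VWB tolerance $\epsilon>0$; we must produce the constant $N$. Let $C$ be the universal constant from Lemma~\ref{dbar distance estimate}, and write $\partial_r\mathcal{P}$ for the $r$-neighborhood of the union of the boundaries of the atoms of $\mathcal{P}$. From Definition~\ref{lambda mu regular partitions definition} I will use that $\sum_{i\ge 1}\mu(\partial_{\lambda^{-i}}\mathcal{P})<\infty$; this forces $\mu(\partial_{\lambda^{-i}}\mathcal{P})\to 0$, hence $\mu(\partial\mathcal{P})=0$, and therefore, for each fixed $i$, $\mu(\partial_{t\lambda^{-i}}\mathcal{P})\to 0$ as $t\downarrow 0$. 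Using this, I first pick a small $\delta>0$ (a function of $\epsilon$ and $C$ only, pinned down at the end), then $N_0$ with $\sum_{i\ge N_0}\mu(\partial_{\lambda^{-i}}\mathcal{P})<\delta$, then $\eta\in(0,1)$ with $\sum_{0\le i<N_0}\mu(\partial_{\eta\lambda^{-i}}\mathcal{P})<\delta$ and $\eta<\epsilon/(4C)$; thus $\sum_{i\ge 0}\mu(\partial_{\eta\lambda^{-i}}\mathcal{P})<2\delta$, a bound with no dependence on quantities chosen later.

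Now apply the hypothesis with parameter $\eta$ to get $N=N(\eta)$. Fix $N_2\ge N_1>N$ and let $A$ run over the atoms of the finite partition $\bigvee_{k=N_1}^{N_2}f^k\mathcal{P}$, which partition $M$. Discarding the atoms for which the hypothesized map does not exist (total $\mu$-measure at most $\eta$), each surviving $A$ carries an $\eta$-measure-preserving map $\theta_A:A\to M$ (with respect to $\mu_A$ and $\mu$) and a set $G_A\subset A$ with $\mu_A(G_A)\ge 1-\eta$ on which $d(f^i x,f^i\theta_A x)<\eta\lambda^{-i}$ for all $i\ge 0$. I apply Lemma~\ref{dbar distance estimate} with $X=A$ (measure $\mu_A$), $Y=M$ (measure $\mu$), $\alpha_i=f^{-i}\mathcal{P}|A$, $\beta_i=f^{-i}\mathcal{P}$ and $\psi=\theta_A$; both spaces are non-atomic since $h_\mu(f)>0$ (Lemma~\ref{entropy zero implies periodic}), and the atoms of $\alpha_i$ and $\beta_i$ are identified through their common labelling by $\mathcal{P}$, so $\alpha_i(x)=\beta_i(\theta_A x)$ precisely when $f^i x$ and $f^i\theta_A x$ lie in the same atom of $\mathcal{P}$. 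The geometric observation is that if $x\in G_A$ and $f^i x\notin\partial_{\eta\lambda^{-i}}\mathcal{P}$, then the ball $B(f^i x,\eta\lambda^{-i})$ is disjoint from $\partial\mathcal{P}$ and contains both $f^i x$ and $f^i\theta_A x$, so these two points lie in a common atom of $\mathcal{P}$. Hence, writing $S_n(x)$ for the number of $i\in\{0,\dots,n-1\}$ with $f^i x\in\partial_{\eta\lambda^{-i}}\mathcal{P}$, we get $h(x,\theta_A x)\le S_n(x)/n$ for every $x\in G_A$.

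It remains to make $S_n/n$ small off a small set for a $\mu$-large family of atoms, uniformly in $n$. By $f$-invariance of $\mu$,
$$
\int_M S_n\,d\mu=\sum_{i=0}^{n-1}\mu\big(\partial_{\eta\lambda^{-i}}\mathcal{P}\big)<2\delta\qquad\text{for all }n\ge 1 .
$$
Since $\sum_A\mu(A)\int_A S_n\,d\mu_A=\int_M S_n\,d\mu<2\delta$, the atoms with $\int_A S_n\,d\mu_A\ge\sqrt{2\delta}$ have total $\mu$-measure at most $\sqrt{2\delta}$; discard these as well. Put $\rho:=\epsilon/(2C)$. For a surviving atom $A$, Markov's inequality gives $\mu_A(\{S_n\ge\rho n\})\le\sqrt{2\delta}/\rho$, so $E_A:=(A\setminus G_A)\cup\{S_n\ge\rho n\}$ satisfies $\mu_A(E_A)\le\eta+\sqrt{2\delta}/\rho$ and $h(x,\theta_A x)<\rho$ for $x\in A\setminus E_A$. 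Choosing $\delta$ so that $\sqrt{2\delta}<\rho^2/2$ and $\sqrt{2\delta}<\epsilon/2$ — and recalling $\eta<\epsilon/(4C)<\rho/2$ and that $\theta_A$ is $\eta$-measure preserving with $\eta<\rho$ — all hypotheses of Lemma~\ref{dbar distance estimate} hold with tolerance $\rho$, giving $\bar d\big((f^{-i}\mathcal{P}|A)_{i=0}^{n-1},(f^{-i}\mathcal{P})_{i=0}^{n-1}\big)<C\rho=\epsilon/2$ for every such $A$ and every $n$; by symmetry of $\bar d$ the same holds in the order required by the VWB definition. The discarded atoms have total $\mu$-measure at most $\eta+\sqrt{2\delta}<\epsilon$, so the estimate holds for $\mu$-$\epsilon$-a.e.\ $A\in\bigvee_{k=N_1}^{N_2}f^k\mathcal{P}$, which is exactly the VWB property with constant $N=N(\eta)$.

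I expect the main obstacle to be the passage from the $n$-uniform bound $\int_M S_n\,d\mu<2\delta$ — which rests essentially on $(\lambda,\mu)$-regularity making $\sum_i\mu(\partial_{\lambda^{-i}}\mathcal{P})$ convergent, so that its partial sums stay bounded independently of $n$ — to a bound on $\mu_A(\{S_n/n\text{ large}\})$ for a $\mu$-large collection of atoms $A$, given that individual atoms of $\bigvee_{k=N_1}^{N_2}f^k\mathcal{P}$ may be arbitrarily small; this is precisely where the Fubini-type identity over the finite atom space followed by two applications of Markov's inequality is needed. A secondary point is to phrase the geometric "same atom" step so that it automatically places $f^i\theta_A x$ in the interior of the relevant atom (which it does, as that point lies in a ball around $f^i x$ avoiding $\partial\mathcal{P}$), so that no extra null-set bookkeeping for $\theta_A$-images is required.
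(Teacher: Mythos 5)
Your argument is correct and follows essentially the same route as the paper's proof: both use the $(\lambda,\mu)$-regularity of $\mathcal{P}$ to make $\sum_{i}\mu(\partial_{\eta\lambda^{-i}}\mathcal{P})$ small for a suitably small $\eta$, transfer this bound to $\mu_A$ for most atoms $A$, and observe that off the boundary neighborhoods the closeness $d(f^ix,f^i\theta x)<\eta\lambda^{-i}$ forces $f^ix$ and $f^i\theta(x)$ into the same atom of $\mathcal{P}$, so that Lemma \ref{dbar distance estimate} applies. The only (harmless) deviation is that you control the counting function $S_n$ via two applications of Markov's inequality to get $h<\rho$ off a small set, whereas the paper discards the whole union $\bigcup_i f^{-i}(\partial_{\lambda^{-i}\epsilon'}\mathcal{P})$ at once via Lemma \ref{measure of bad atoms} and gets $h=0$ on the remaining set.
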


\begin{proof}
Fix $\epsilon > 0$, our goal is to find $N > 0$, such that for any \textit{fixed} $N_2 \geq N_1 > N$ and $n \geq 0$, we have that $\epsilon$-a.e atom $A \in \bigvee^{N_2}_{k=N_1}f^k \mathcal{P}$ satisfy 
$$
\bar{d}\left(\left(f^{-i}\mathcal{P}\right)_{i=0}^{n-1},\left(f^{-i}\mathcal{P}|A\right)_{i=0}^{n-1}  \right) < \epsilon.
$$
To do this, we first fix $\epsilon' < \frac{\epsilon}{2}$, for which the following holds: 
$$
\sum_{i=0}^{n-1} \mu \left(\partial_{\lambda^{-i}\epsilon'}\mathcal{P} \right) < \frac{\epsilon^2}{4}.
$$

We can do this in the following way. First, since $\mathcal{P}$ is a $(\lambda, \mu)$-regular partition, we can find a $k_0$ such that
$$
\sum_{k \geq k_0} \mu\left(\partial_{\lambda^{-k}}\mathcal{P} \right) < \frac{\epsilon^2}{4},
$$
so, it is enough to choose $\epsilon' < \min\left(\frac{\epsilon}{2}, \lambda^{-k_0}\right)$.  \\ 

Now, using the assumption of this lemma, we can find $N>0$ such that for any $N_2 \geq N_1 > N$, we have that for $\epsilon'$-a.e atom $A \in \bigvee^{N_2}_{k=N_1}f^k \mathcal{P}$, there is a map $\theta: A \longrightarrow M$, such that
\begin{enumerate}
    \item $\theta$ is $\epsilon'$-measure preserving w.r.t to the measures $\mu_A$ and $\mu$; 
    
    \item $d(f^n(x), f^n(\theta(x)) < \lambda ^{-n}\epsilon$.
\end{enumerate}
We will show that this $N > 0$ is the one we want for the VWB property. To do that, we first need to estimate the measure of the set 
$$
\bigcup_{i=0}^{n-1}f^{-i}(\partial_{\lambda^{-i}\epsilon'}\mathcal{P}).
$$
To do this, we notice that 
$$
\mu \left(\bigcup_{i=0}^{n-1}f^{-i}(\partial_{\lambda^{-i}\epsilon'}\mathcal{P}) \right) \leq \sum_{i=0}^{n-1}\mu(f^{-i}\partial_{\lambda^{-i}\epsilon'}\mathcal{P}) = \sum_{i=0}^{n-1}\mu(\partial_{\lambda^{-i}\epsilon'}\mathcal{P}),
$$
where we get the last equality by the measure preservence. Now, by our choice of $\epsilon'$, we see that
$$
\mu\left(\bigcup_{i=0}^{n-1}f^{-i}(\partial_{\lambda^{-i}\epsilon'}\mathcal{P})\right) < \frac{\epsilon^2}{4}.
$$

Then, using lemma \ref{measure of bad atoms}, we see that for $\frac{\epsilon}{2}$-a.e. atom $A \in \bigvee^{N_2}_{k=N_1}f^k\mathcal{P}$, 
$$
\mu_A \left(\bigcup_{i=0}^{n-1}f^{-i}(\partial_{\lambda^{-i}\epsilon'}\mathcal{P}) \right) < \frac{\epsilon}{2}.
$$

Therefore, since $\epsilon'$ is chosen so that $\epsilon' < \frac{\epsilon}{2}$, for $\epsilon$-a.e. atom $A \in \bigvee^{N_2}_{k=N_1}f^k\mathcal{P}$, we can find a map $\theta:A \longrightarrow M$ satisfying the two conditions above, and at the same time, we have 
$$
\mu_A \left(\bigcup_{i=0}^{n-1}f^{-i}(\partial_{\lambda^{-i}\epsilon'}\mathcal{P}) \right) < \frac{\epsilon}{2}.
$$

Fix such an atom $A$, and define 
$$
E_A := \left \{ x \in A \ | \ d(f^ix, f^i\theta(x)) < \epsilon' \mbox{ for } i=0, \dots \mbox{ and } x \notin \bigcup_{i=0}^{n-1}f^{-i}(\partial_{\lambda^{-i}\epsilon'}\mathcal{P}) \right \}.
$$

First, notice that $\mu_A(E_A) < \frac{\epsilon}{2} + \epsilon' < \epsilon$. Now, for any $x \in E_A$, both $f^i(x)$ and $f^i(\theta(x))$ lie in the same atom of the partition $\mathcal{P}$, for $i = 0, \dots, n-1$. This holds because $d(f^i(x), f^i(\theta(x)) < \lambda^{-i}\epsilon'$, and because $f^i(x) \notin \partial_{\lambda^{-i}\epsilon'}\mathcal{P}$. Therefore, we see that $h(x, \theta(x)) = 0$, where $h$ is defined as in lemma \ref{dbar distance estimate}. Hence, $\theta: A \longrightarrow M$ satisfy the conditions of lemma \ref{dbar distance estimate}. Therefore, 
$$
\bar{d}\left(\left(f^{-i}\mathcal{P}\right)_{i=0}^{n-1},\left(f^{-i}\mathcal{P}|A\right)_{i=0}^{n-1}  \right) < \epsilon
$$
and also this holds for $\epsilon$-a.e. atom in $\bigvee^{N_2}_{k=N_1}f^k \mathcal{P}$, which implies that $\mathcal{P}$ has the VWB Property.
\end{proof}
\subsection{The Proof}
In this section, we will show that if  $f:(M, \mu) \longrightarrow (M, \mu)$ has the K-property, and $\mu$ admits a local product structure, then the system is in fact Bernoulli. We will do this by taking a sequence of $(\lambda, \mu)$-partitions, $\mathcal{P}_1 \leq \mathcal{P}_2 \leq \dots$, such that $\bigvee_{i=1}^\infty \mathcal{P}_i = \varepsilon$, the partition into points. this sequence of partitions can be constructed as explained in Section \ref{constuction of lambda mu partitions}. Then we show that each one of these partitions has the VWB property. We then obtain the Bernoulli property for the system $(f, \mu)$ using propositions \ref{vwb to bernoulli} and \ref{bernoulli to bernoulli} stated in the last section. Therefore, it is enough to show the following 
\begin{prop} \label{lambda mu regular has vwb}
Let $f: M \longrightarrow M$ be a $C^{1+\alpha}$ diffeomorphism. Assume that $f$ preserves a hyperbolic measure $\mu$ admitting a local product structure. Assume furthermore that $(f, \mu)$ has the K-property. Let $\mathcal{P}$ be any $(\lambda, \mu)$-regular partition for $M$, then $\mathcal{P}$ has the VWB Property.
\end{prop}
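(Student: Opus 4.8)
The plan is to verify the hypothesis of Lemma~\ref{vwb charactarization for lambda mu}: given $\epsilon>0$ we must produce $N$ so that for all $N_2\ge N_1>N$ and $\mu$-$\epsilon$-a.e.\ atom $A\in\bigvee_{k=N_1}^{N_2}f^k\mathcal P$ there is an $\epsilon$-measure preserving map $\theta\colon A\to M$ with $d(f^nx,f^n\theta(x))<\lambda^{-n}\epsilon$ for all $n\ge 0$ and $\mu_A$-$\epsilon$-a.e.\ $x\in A$. The map $\theta$ will be a ``sliding along local stable manifolds'' map built inside a fixed fine covering of $M$ by almost--product rectangles; sliding $x$ inside its own stable leaf forces $\theta(x)\in V^s_{loc}(x)$, which by the uniform hyperbolicity on a Pesin block automatically yields the orbit estimate, provided $\lambda$ is fixed below the inverse $\lambda_0^{-1}$ of the stable contraction rate $\lambda_0<1$ of the block (this, together with $\lambda>1$, is how $\lambda$ "to be determined later" in the section preamble is pinned down; recall the construction of $(\lambda,\mu)$-regular partitions works for every $\lambda>1$).

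First fix a Pesin block $\Lambda^\ell$ with $\mu(\Lambda^\ell)>1-\epsilon^2$ and, using Proposition~\ref{section four main} (through Lemma~\ref{Lusin based theorem}), fix an $\epsilon'$--covering $\mathcal R=\{R_1,\dots,R_q\}$ by rectangles of radius $<r$, with $r$ and $\epsilon'\ll\epsilon$ to be taken small; on each ``good'' $R\in\mathcal R$ the measure $\mu_R$ is within $\epsilon'$ of the product measure $\mu^{\mathcal P}_{p_R,R}$, and by Definition~\ref{local product structure} the stable holonomy Jacobians inside $R$ lie in $[C^{-1},C]$ — this last bound is what lets us transfer smallness of measure between $\mu_R$, the product measure, and conditionals along unstables. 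Now Proposition~\ref{almost u-saturation property} gives $N_0(\epsilon')$, $\xi_0$ such that for $N_2\ge N_1\ge N_0$ and $\xi<\xi_0$ the partition $\bigvee_{k=N_1}^{N_2}f^k\mathcal P$ is $(\epsilon',\xi)$--almost $u$--saturated, so for $\mu$-most atoms $A$ the trace $A\cap R$ inside a good $R$ is, up to $\mu_R$-small error, a union $\bigcup_{y\in T_R}V^u_R(y)$ of full unstable plaques over a measurable transversal set $T_R\subset V^s_R(p_R)$. The K--property enters in the complementary fact that the remote future $\bigcap_N\sigma\!\left(f^k\mathcal P: k\ge N\right)$ lies inside the Pinsker $\sigma$--algebra of $(f,\mu)$ (any tail algebra of a finite-entropy partition has zero entropy), which is trivial under the K--property; applying martingale convergence to the finite partition $\mathcal R$ and the tower property over the coarser algebra $\sigma(\bigvee_{k=N_1}^{N_2}f^k\mathcal P)$ shows that for $N_1$ large and $\mu$-most atoms $A$ one has $\sum_{R\in\mathcal R}\bigl|\mu_A(A\cap R)-\mu(R)\bigr|<\epsilon'$. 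Intersecting these conditions and discarding the few bad atoms via Lemma~\ref{measure of bad atoms}, we get for $N_1$ large a $\mu$-$\epsilon$-a.e.\ atom $A$ that is simultaneously almost $u$--saturated, has $\mathcal R$--weights $\epsilon'$-close to those of $\mu$, and places $\mu_A$-most of its mass on good rectangles with good unstable plaques.

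For such an $A$, define $\theta$ rectangle by rectangle: on a good $R$, pick (both spaces being non-atomic Lebesgue spaces) a measure isomorphism $g_R$ from $T_R$, with the normalized restriction of the factor measure of $\mu^{\mathcal P}_{p_R,R}$, onto $V^s_R(p_R)$ with its factor measure, and for $x$ in the plaque $V^u_R(y)$ set $\theta(x):=V^s_R(x)\cap V^u_R(g_R(y))$, i.e.\ slide $x$ within its own stable leaf from the plaque over $y$ to the plaque over $g_R(y)$. Then $\theta(x)\in V^s_R(x)\subset V^s_{loc}(x)$ and $d(x,\theta(x))<\operatorname{diam}R<r$, so uniform hyperbolicity on $\Lambda^\ell$ gives $d(f^nx,f^n\theta(x))\le C_\ell\lambda_0^{\,n}r$ up to the slowly-varying Pesin factors, hence $<\lambda^{-n}\epsilon$ for all $n\ge 0$ once $r$ is small and $\lambda<\lambda_0^{-1}$. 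For measure-preservation: in the product measure $\mu^{\mathcal P}_{p_R,R}$ every unstable conditional equals the one at $p_R$, so $\theta$ carries the product measure conditioned on $\bigcup_{y\in T_R}V^u_R(y)$ exactly onto $\mu^{\mathcal P}_{p_R,R}$; combining this with $\mu_R\approx_{\epsilon'}\mu^{\mathcal P}_{p_R,R}$, with $\mu_{A\cap R}\approx_{\epsilon'}$ the product conditioned on the plaques over $T_R$ (both approximations controlled by the $C^{\pm1}$ Jacobian bound), and with $\sum_R|\mu_A(A\cap R)-\mu(R)|<\epsilon'$, we get that $\theta_*\mu_A$ is within $O(\epsilon')$ of $\sum_R\mu(R)\mu_R\approx\mu$, so $\theta$ is $\epsilon$-measure preserving for $\epsilon'$ small. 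This verifies Lemma~\ref{vwb charactarization for lambda mu} and gives the VWB property; together with Propositions~\ref{vwb to bernoulli}, \ref{bernoulli to bernoulli} and the refining sequence of $(\lambda,\mu)$--regular partitions this yields Theorem~\ref{Bernoulli property}.

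The main obstacle is twofold. First, the K--step: making precise that the remote future of the (not necessarily two-sided generating) partition $\mathcal P$ sits inside the trivial Pinsker algebra, so that the weights $\mu_A(A\cap R)$ equidistribute; here one may instead pass to a finite generating partition dominating $\mathcal P$ and use that its tail is zero-entropy hence trivial. Second, converting the chain of ``$\approx$'' estimates for $\theta_*\mu_A$ into a genuine $\epsilon$-measure-preservation bound in the sense required by Lemma~\ref{dbar distance estimate}; this is exactly where the bounded-holonomy-Jacobian clause of Definition~\ref{local product structure} is indispensable, since it prevents the errors coming from $\mu$-small or $\mu_A$-null sets from being amplified when plaques are transported by the sliding maps.
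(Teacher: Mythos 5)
Your proposal follows essentially the same route as the paper: an $\epsilon$-covering by almost-product rectangles, almost $u$-saturation of the future partition, the K-property to equidistribute atom weights over the rectangles, and a stable-holonomy ``sliding'' map built from a measure isomorphism on the unstable transversal, all fed into Lemma~\ref{vwb charactarization for lambda mu}. The ratio-type $\epsilon$-measure-preservation estimates you flag as the remaining technical work are exactly what the paper packages in Lemma~\ref{almost measurable map lemma} and the gluing Lemma~\ref{glueing lemma}, using the $C^{\pm 1}$ holonomy-Jacobian bound just as you indicate.
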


We will spend the rest of this section proving proposition \ref{lambda mu regular has vwb}. We will prove this proposition using the characterization of the VWB property we have established in the last subsection in lemma \ref{vwb charactarization for lambda mu}. \\ 

\begin{proof}[Proof of proposition \ref{lambda mu regular has vwb}]

\textbf{The Set up.} Let us fix a small $\epsilon>0$, and a number $\epsilon_3>0$ to be determined later. By proposition \ref{almost u-saturation property}, there is a $\xi = \xi(\epsilon(\epsilon_3)>0$ such that when $N_0'$ is large enough, we have for any $N_2 \geq N_1 \geq N_0'$ that $\bigvee_{k=N_1}^{N_2} f^k\mathcal{P}$ is an $(\epsilon_3, \xi)$ u-saturated partition.

Let $\mathcal{R} = \{ R_1, \dots, R_m\}$ be a $\epsilon_1$-regular partition by rectangles in $\Lambda^\ell$ ($\epsilon_1$ will be determined later), and for each $i=1, 
\dots, m$, let $E_i \subset R_i$ be the measurable subset on which we have 
$$
\left|\frac{d\mu_{R_i}}{d\mu_{p_i,R_i}^p} - 1 \right| < \epsilon_1,
$$
where $p_i$ is a point in $R_i$ as in the definition of $\epsilon$-regular partitions by rectangles. Recall that from the definition of $\epsilon$-regular partitions by rectangles, we have for each $i =1, \dots, m$ that $\mu_{R_i}(E_i)\geq 1-\epsilon_1$. By the construction of $\epsilon$-regular partitions by rectangles, we see that we can take the rectangles to have diameters which are as small as we want; so we assume that 
$$
\max_{i=1,\dots, m} diam(R_i)  < \frac{\xi}{100}.
$$

Since $(f,\mu)$ has the K-property, we see that there is $N_0''>0$ such that for any $N_2 \geq N_1 \geq N_0''$, we have that $\epsilon_2$-a.e. $A \in \bigvee_{k = N_1}^{N_2} f^k \mathcal{P}$ satisfies 
\begin{equation}
\left|\frac{\mu_A(R_i)}{\mu(R_i)} - 1 \right| < \epsilon_2 ,
\end{equation}
for all $i =1, \dots m+1$ (We define $R_{m+1} = M \backslash  \bigcup_{i=1}^m R_i$).  

For the rest of this section, we fix $N_0 := \max(N_0', N_0'')$, $N_2 \geq N_1 \geq N_0$. We also take the small constants $\epsilon_1, \epsilon_2, \epsilon_3 < \left( \frac{\epsilon}{100} \right)^4$. We will show that for $\epsilon$-a.e. atom $A \in \bigvee_{N_1}^{N_2}f^k\mathcal{P}$, we can construct a measurable map $\theta : A \longrightarrow M$, for which the following two properties holds
\begin{enumerate}
    \item $\theta$ is $\epsilon$-measure preserving with respect to the measures $\mu_A$ and $\mu$;
    
    \item $d(f^i(x), f^i(\theta(x))) < \epsilon$, for $i=0, 1, \dots$, for $\mu_A$-$\epsilon$-a.e. point $x \in A$. 
\end{enumerate}

This will be enough to prove the proposition, using lemma \ref{vwb charactarization for lambda mu}, and the fact that $\epsilon>0$,  $N_2 \geq N_1 \geq N_0$ are arbitrarily chosen.  
\\ \\ 

\textbf{Bad Atoms.}
Let us define the following three sets of \textit{Bad} atoms of the partition $\bigvee_{k=N_1}^{N_2}f^k\mathcal{P}$. We define $B_2$ to be the set of all those atoms $A \in \bigvee_{k=N_1}^{N_2}f^k\mathcal{P}$ for which 
$$
\left|\frac{\mu_A(R_i)}{\mu(R_i)} - 1 \right| \geq \epsilon_2,
$$
for some $i = 1, \dots, m+1$. We define $B_1$ to be the set of all atoms $A \in \bigvee_{k=N_1}^{N_2}f^k\mathcal{P}$ for which we have 
$$
\mu_A\left(\bigcup_{i=1}^m R_i\backslash E_i\right) \geq \epsilon_1^{\frac12}.
$$
We define $B_3$ to be the set of atoms of $\bigvee_{i=N_1}^{N_2}f^k\mathcal{P}$ which are not $(\epsilon_3, \xi)$ u-saturated. We call an atom $A$ of $\bigvee_{i=N_1}^{N_2}f^k\mathcal{P}$ \textit{Good} if 
$$
A \notin B_1 \cup B_2 \cup B_3.
$$
To estimate the total size of \textit{Good} atoms, we first notice that one has $\mu\left(\bigcup\limits_{A \in B_2} A\right) < \epsilon_2 $ and that $\mu\left(\bigcup\limits_{A \in B_3} A\right) < \epsilon_3 $, which follows immediately by construction. On the other hand, to estimate the size of $B_1$, we use lemma \ref{measure of bad atoms} and the straightforward fact that $\mu\left(\bigcup_{i=1}^m R_i \backslash E_i \right) < \epsilon_1$ to see that we have $\mu\left(\bigcup\limits_{A \in B_1} A\right) < \epsilon_1^{\frac12}$. Hence, the total measure of all \textit{Good} atoms of $\bigvee_{k=N_1}^{N_2}f^k\mathcal{P}$ is at least $1 -\epsilon_1^{\frac12} - \epsilon_2 - \epsilon_3$. \\ \\ 

\textbf{Bad Rectangles for Good Atoms.} Let us fix a \textit{Good} atom $A \in \bigvee_{k=N_1}^{N_2}f^k\mathcal{P}$, and define $A_i : = A\cap R_i$, for $i=1, \dots, m+1$, Notice that $A_1, \dots, A_m, A_{m+1}$ is a partition of the atom $A$. Let us also define the set $U_\xi(A) \subset A $ by  
$$
U_\xi(A) := \left \{x \in A | r^u(x) \geq \xi \mbox{ and } V^u_\xi(x) \subset A \right \}.
$$
For each rectangle $R_i \in \mathcal{R}$, set $U_{\xi,i}(A) := U_\xi(A) \cap R_i$. We define the following sets of bad rectangles for the atom $A$. We let $B_1(A)$ be the set of all rectangles $R_i$ for which $\mu_{A_i}(A_i \backslash E_i) \geq \epsilon_1^{\frac14}$. Then we let $B_2(A)$
to be the set of all rectangles $R_i \in \mathcal{R}$ for which 
$$
\mu_{A_i}(A_i\backslash U_{\xi,i}(A) ) \geq \epsilon^{\frac12}.
$$
Next, we estimate the total sizes of these bad rectangles from above, with respect to the measure $\mu_A$. Since $A$ is a \textit{Good} atom of $\bigvee_{k=N_1}^{N_2}f^k\mathcal{P}$, we see that $\mu_A\left(\bigcup_{i =1}^{m+1} R_i \backslash E_i \right) < \epsilon_1^{\frac12}$. Now, since $A_1, \dots, A_m, A_{m+1}$ is a partition of $A$, we can apply lemma \ref{measure of bad atoms}, and conclude that $\mu_A\left(\bigcup_{R_i \in B_1(A)} R_i \right) < \epsilon_1^{\frac14}$. 
On the other hand, to estimate $\mu_A\left(\bigcup\limits_{R_i \in B_2(A)}R_i \right)$, we first note that since $A$ is a \textit{Good} atom of $\bigvee_{k=N_1}^{N_2}f^k\mathcal{P}$, we have $\mu_A(A \backslash U_\xi(A)) < \epsilon_3$. Now applying lemma \ref{measure of bad atoms} with respect to partition $A_1, \dots , A_{m+1}$ of $A$, and the measure $\mu_A$, we obtain the bound $\mu_A\left(\bigcup\limits_{R_i \in B_2(A)}R_i \right) < \epsilon_3^{\frac12}$. \\ 

Now, we say that a rectangle $R \in \mathcal{R}$ is a \textit{Good} rectangle for the atom $A$, if $R \notin B_1(A) \cap B_2(A)$ and $R \neq R_{m+1}$ (We want to exclude $R_{m+1} := M \backslash \bigcup_{i=1}^m R_i$ because it is not a rectangle). From our bounds above, we see that the set of \textit{Good} rectangles for the atom $A$ has $\mu_A$-measure greater than $$
1 - \epsilon_1^{\frac14} - \epsilon_3^{\frac12} - \mu_A(R_{m+1}).
$$ 
Now, since $A$ is a \textit{Good} atom of $\bigvee_{k=N_1}^{N^2}f^k\mathcal{P}$, we know that 
$$
\left |\frac{\mu_A(R_{m+1})}{\mu(R_{m+1})} - 1 \right | < \epsilon_1.
$$
Therefore, we get the bound 
$$
\mu_A(R_{m+1}) < 2 \mu(R_{m+1}) < 2 \epsilon_1  << \epsilon_1^{\frac{1}{4}},
$$
whenever $\epsilon_1$ is small enough. Hence, we finally see that the set of all \textit{Good} rectangles for a \textit{Good} atom $A$ has $\mu_A$-measure bounded below by $1 - 2\epsilon_1^{\frac{1}{4}} - \epsilon_3^{\frac12}$.
\\

\textbf{Construction of a Map for a Good Atom in a Good Rectangle. }
Take a pair $(A, R_i)$ of a \textit{Good} atom $A$ and a good rectangle $R_i$ for $A$. We will construct a map 
$$
\theta_i : A\cap R_i \longrightarrow R_i,
$$
 which is $\epsilon_4$-measure preserving with respect to the measures $\mu_{A_i}$ and $\mu_{R_i}$ (where $\epsilon_4$ depends only on $\epsilon_1, \ \epsilon_2$ and $\epsilon_3$). First, notice that the subset $U_{xi, i}(A) \subset R_i$ is a $u$-subset of $R_i$. That is, it is saturated by full unstable manifolds $V^u_{R_i}(x)$ in $R$. Now, fix $p_i \in R_i$, and start by taking an arbitrary bijective measure preserving map 
 $$
 \theta_0 : (V^s_{R_i}(p_i) \cap U_{\xi,i}(A), \tilde \mu^s_{p_i}|_{U_i(A)}) \longrightarrow (V^s_i(p_i), \tilde \mu^s_{p_i}),
 $$
 where $\tilde \mu_{p_i}^s$ is the factor measure of $\mu$ with respect to the partition of $R_i$ by $\left(V^u_{R_i}(x) \right)_{x \in R_i}$ realized as a measure on $V^s_{R_i}(p_i)$. Such a map exists, since these are Lebesgue probability spaces with not atoms. Next, we define 
 $$
 \theta_i : U_{\xi,i}(A) \longrightarrow R_i
 $$
 by $\theta_i(x) := (\pi^u_{p_i,x} \circ \theta_0 \circ \pi^u_{x,p_i})(x) $. It is straightforward to verify that this map preserves the restriction of the product measures $\mu^{\mathcal{P}}_{p_i,R_i}|U_{\xi,i}(A)$ and the product measure $\mu^{\mathcal{P}}_{p_i,R_i}$. Now, since the pair $(A, R_i)$ is \textit{Good}, we see that $\mu_{A_i}(E_i) \geq 1-\epsilon$, and hence $\mu_{A_i}(E_i \cap U_i(A)) \geq 1 - 2\epsilon$, and in particular that $\mu_{U_{\xi,i}(A)}(E_i) \geq 1 - 2\epsilon$. On the other hand, we have 
 $$ 
 \mu^p_{R_i}(E_i) \geq \frac{\mu_{R_i}(E_i)}{1+\epsilon} \geq \frac{1-\epsilon}{1+\epsilon} > 1 - 2\epsilon.
 $$
 Hence, the conditions of lemma \ref{almost measurable map lemma} are satisfied, and therefore the map $\theta_i : U_{\xi,i}(A) \longrightarrow R_i$ is $C \epsilon$-measure preserving with respect to the measures $\mu_{U_{\xi,i}(A)}$ and $\mu_{R_i}$. We can extend this map to the whole of $A_i$, by sending each point $x \in A_i \backslash U_{\xi,i}(A)$ to itself. The resulting map $\theta_i : A_i \longrightarrow R_i$ is $C'\epsilon$-measure preserving with respect to the measures $\mu_{A_i}$ and $\mu_{R_i}$.

 Now, to construct a map
 $$
 \theta : A \longrightarrow M,
 $$
 which is $C\epsilon$-measure preserving for $\mu_A$ and $\mu$, notice that the good atom $A$ and the partition $\mathcal{R}$ satisfy the conditions of lemma \ref{glueing lemma}. Therefore, this map can be constructed by applying the lemma. 
\end{proof}

\end{section}

\begin{section}{Examples}
\textbf{Equilibrium measures for Maximal Hyperbolic Sets.}
\begin{definition}
Let $\varphi: M \longrightarrow \mathbb{R}$ be a continuous function. We say that $\mu$ is an equilibrium measure for $\varphi$ if 
$$
\sup_{\nu \in \mathcal{M}(f,M)} \left(h_\nu(f) + \int_M \varphi d\nu \right)
$$
is attained by the measure $\mu$. 
\end{definition}
These measures have been constructed for a wide variety of systems. For example, if $\Lambda \subset M $ is a maximal hyperbolic set, and $\varphi: \Lambda \rightarrow \mathbb{R}$ is a Holder continuous potential, then it is a classical result that $f|_\Lambda$ preserves an equilibrium measure for $\varphi$ (see for example \cite{sin68}, \cite{BOW} and \cite{rue76}). The classical proofs of existence of equilibrium measures in this case use either the symbolic representation of the system using Markov partitions, or the specification property of uniformly hyperbolic systems. These two approaches do not yield much information on the conditional measures of equilibrium states along unstable manifolds. Recently, the authors in \cite{CPZ1} and \cite{CPZ2} construct equilibrium states for hyperbolic attractors through a geometric approach. This approach originates in the classical works \cite{sin68} and \cite{rue76} and \cite{PS82}, where SRB measures $\mu$ where shown to satisfy
$$
\mu = \lim_{n \rightarrow \infty} \frac1n \sum_{k=0}^{n-1} f_*^k m_{V^u_{loc}(x)},
$$
where $m_{V^u_{loc}(x)}$ is the normalized Riemmanian volume. The authors in \cite{CPZ1} replace the normalized Riemmanian volume $m_{V^u_{loc}(x)}$ by a reference measure $m^{\mathcal{C}}_{V^u_{loc}(x)}$, whose construction depends on the Holder potential $\varphi$. Then, they show that 
$$
\mu = \lim_{n \rightarrow \infty} \frac1n \sum_{k=0}^{n-1} f_*^k m^{\mathcal{C}}_{V^u_{loc}(x)}
$$
is an equilibrium measure for $\varphi$. A consequence of this geometric construction is that the equilibrium measure constructed admits a local product structure! The local product strucutre for equilibrium states of axiom A systems was also obtained in \cite{L3}, where the author induces the map and the measure on a rectangle of the a Markov partition. We can apply the result of this paper to see that ergodicity, the $K$-property and the Bernoulli property also follow, without any use of Markov partitions or specification property.

\textbf{Measure of Maximal $u$-Entropy.} 
In \cite{HHW}, the authors introduce the notion of measures of maximal $u$-entropy for partially hyperbolic systems. Let $f : M \longrightarrow M$ be a $C^1$ partially hyperbolic diffeomorphism. 

\begin{definition}[Unstable Entropy \cite{HHW}]
The unstable metric entropy of $f$ with respect to $\mu$ is defined by 
$$
h^u_\mu(f) := \sup_{\eta \in \mathcal{P}^u} \sup_{\alpha \in \mathcal{P}} \limsup_{n \rightarrow \infty} \frac1n H_\mu \left ( \bigvee_{k=0}^{n-1}f^k \alpha | \eta \right )
$$
The unstable topological entropy of $f$ is defined by 
$$
h^u_{top}(f) = \lim_{\delta \rightarrow 0 } \sup_{x \in M} h^u_{top}\left(f, \overline {W^u(x, \delta)} \right),
$$
where 
$$
h^u_{top}\left(f, \overline {W^u(x, \delta)}\right) = \lim_{\epsilon \rightarrow 0} \limsup_{n\rightarrow \infty } \frac1n \log N^u(f, \epsilon, n, x, \delta).
$$
\end{definition}

\begin{definition}
An $f$-invariant measure $\mu$ is said to be of $u$-maximal entropy if $h^u_\mu(f) = h^u_{top}(f)$.
\end{definition}

in \cite{RVYY} the authors study $u$-maximal entropy measures for some partially hyperbolic extensions of some uniformly hyperbolic systems.
\begin{definition}
Let $f:M \longrightarrow M$ be a partially hyperbolic, dynamically coherent diffeomorphism $C^{1+\alpha}$ diffeomorphism, with partially hyperbolic splitting $TM = E^{cs} \oplus E^{uu}$. $f$ is said to factor over Anosov, if there is a hyperbolic linear toral automorphism $A : \mathbb{T}^d \longrightarrow \mathbb{T}^d$, and a continuous surjective map $\pi : M \longrightarrow \mathbb{T}^d$, such that 
\begin{enumerate}
    \item $\pi \circ f = A \circ \pi$
    \item $\pi$ maps each strong-unstable leaf of $f$ to an unstable leaf of $A$.
    \item $\pi$ maps each center-stable leaf of $f$ to a stable leaf of $A$.
\end{enumerate}
\end{definition}

In \cite{RVYY} the authors show that the maximal $u$-entropy measures have a local product structure for some examples, including systems that are $C^1$ close to Smale solenoids. They show that in these examples, the measures of maximal $u$-entropy are hyperbolic, and in fact, the center-stable holonomy maps act as isometries for the conditional measures. As a consequence, we see that our results above immediately apply to these classes of measures and systems.

\textbf{Local Equilibrium States} In \cite{BO22}, the author introduces the notion of \textit{local equilibrium states}
\begin{definition}[Local equilibrium state \cite{BO22}[Definition 2.12]]
Let $\varphi: WT^{\epsilon}_{\chi} \longrightarrow \mathbb{R}$ be a Grassmanian-Holder continuous potential (see \cite{BO22}[Definition 2.10]. A $\chi$-hyperbolic invariant measure is called a \textit{local equilibrium state} if
$$
h_\mu(f) + \int \varphi d\mu = \sup \left \{h_\nu(f) + \int \varphi d\nu : \nu  \mbox{ is an } f \mbox{-invariant probability measure on } H_{\chi}(p) \right\},
$$
where $p$ is hyperbolic periodic point, and $H_\chi(p)$ is an ergodic homoclinic components supporting $\mu$. 
\end{definition}
Among other things, the author shows that these local equilibrium states admit a local product structure, and hence our results apply immediately to this class of measures. 
\end{section}

\begin{section}{Appendix: Measure Theory}

This section can be considered as the measure theoretical toolbox we are going to use to prove the Bernoulli property. All the lemmas are quite general and abstract, and one might be able to apply them to establish the Bernoulli property in other contexts, for example for flows and some partially hyperbolic systems.  

\begin{definition}
Let $(X, \mathcal{B}, \mu)$ and $(Y, \mathcal{F}, \nu)$ be two probability spaces. We say that an invertible measurable map $\theta : X \longrightarrow Y$ is $\epsilon$-measure preserving if there is a measurable subset $E_\theta \subset X$ such that $\mu(E) < \epsilon$, and such that for any measurable set $S \subset X \backslash E$, we have 

$$
\left |\frac{\mu(S)}{\nu(\theta S)}-1 \right | < \epsilon.
$$
\end{definition}

The next lemma is crucial in the proof of the Bernoulli property. It allows us to establish $\epsilon$-measure preservence out of conditions on the Radon–Nikodym density. In the proof of Bernoulli, the measure being preserved is going to be the product measure on a rectangle, while the $\epsilon$-measure preserved will be the restriction of a measure with local product structure to that rectangle.
\begin{lemma}\label{almost measurable map lemma}
Let Let $(X, \mathcal{F}_X, \nu_X)$ and $(Y, \mathcal{F}_Y, \nu_Y)$ be two probability spaces. Let $\theta : (X, \nu_X) \longrightarrow (Y, \nu_Y)$ be an isomorphism of probability spaces. Let $\mu_X$ and $\mu_Y$ be two probability measures on $(X, \mathcal{F}_X)$ and $(Y, \mathcal{F}_Y)$, which are equivalent to the measures $\nu_X$ and $\nu_Y$ respectively. Let us define $\rho_X = \frac{d\mu_X}{d\nu_X}$ and $\rho_Y = \frac{d\mu_Y}{d\nu_Y}$, and let us define the the sets 
$$
E_X = \{x \in X \ | \ \left |\rho_X(x) - 1 \right | < \epsilon \},
$$

$$
E_X = \{y \in Y \ | \ \left |\rho_Y(y) - 1 \right | < \epsilon \}.
$$
If we assume that $\mu_X(E_X) \geq 1-\epsilon$, $\nu_Y(E_Y) \geq 1-\epsilon$, and that $\epsilon >0$ is small enough, then the map $\theta: (X, \mu_X) \longrightarrow (Y, \mu_Y)$ is $5\epsilon$-measure preserving.
\end{lemma}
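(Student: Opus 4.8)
The plan is to exhibit the exceptional set explicitly and then check the two defining inequalities of ``$5\epsilon$-measure preserving'' by elementary estimates on the Radon--Nikodym densities $\rho_X$ and $\rho_Y$. (First one should note the harmless typo in the statement: the second displayed set is $E_Y=\{y\in Y:|\rho_Y(y)-1|<\epsilon\}$, not a second copy of $E_X$; and recall that $\theta$, being an isomorphism of probability spaces, preserves $\nu_X,\nu_Y$ and sends measurable sets to measurable sets.)

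First I would set $E:=(X\setminus E_X)\cup\theta^{-1}(Y\setminus E_Y)$ and claim this is the bad set. To bound $\mu_X(E)$, the first piece is immediate from the hypothesis: $\mu_X(X\setminus E_X)\le\epsilon$. For the second piece I use that $\theta$ preserves $\nu$, so that $\nu_X\big(\theta^{-1}(Y\setminus E_Y)\big)=\nu_Y(Y\setminus E_Y)\le\epsilon$; then I split $\theta^{-1}(Y\setminus E_Y)$ along $E_X$ and its complement and write $\mu_X\big(\theta^{-1}(Y\setminus E_Y)\big)=\int_{\theta^{-1}(Y\setminus E_Y)}\rho_X\,d\nu_X$, bounding the integrand by $1+\epsilon$ on $E_X$ and bounding the contribution of $X\setminus E_X$ by $\mu_X(X\setminus E_X)\le\epsilon$. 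This gives $\mu_X\big(\theta^{-1}(Y\setminus E_Y)\big)\le(1+\epsilon)\epsilon+\epsilon$, hence $\mu_X(E)\le(3+\epsilon)\epsilon<5\epsilon$ once $\epsilon$ is small.

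Next, fix a measurable $S\subseteq X\setminus E$. On $S$ we have $|\rho_X-1|<\epsilon$, so $\mu_X(S)=\int_S\rho_X\,d\nu_X$ lies in $\big((1-\epsilon)\nu_X(S),\,(1+\epsilon)\nu_X(S)\big)$. Since $X\setminus E\subseteq\theta^{-1}(E_Y)$, the image $\theta S$ lies in $E_Y$, where $|\rho_Y-1|<\epsilon$, so $\mu_Y(\theta S)$ lies in $\big((1-\epsilon)\nu_Y(\theta S),\,(1+\epsilon)\nu_Y(\theta S)\big)$; and $\nu_Y(\theta S)=\nu_X(S)$. Dividing, the ratio $\mu_X(S)/\mu_Y(\theta S)$ lies in $\big(\tfrac{1-\epsilon}{1+\epsilon},\tfrac{1+\epsilon}{1-\epsilon}\big)$, so its distance to $1$ is at most $\tfrac{2\epsilon}{1-\epsilon}$, which is below $5\epsilon$ for $\epsilon<\tfrac{3}{5}$. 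Together with the bound on $\mu_X(E)$ this is exactly what the definition of $5\epsilon$-measure preserving demands.

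The only point that is not pure bookkeeping --- the ``hard part'' --- is the transfer in the middle step of the estimate on $\theta^{-1}(Y\setminus E_Y)$ from $\nu_X$ to $\mu_X$, since the hypothesis $\nu_Y(E_Y)\ge 1-\epsilon$ is stated with respect to the auxiliary measure; the density bound $\rho_X<1+\epsilon$ on $E_X$ is precisely what lets one pass back to $\mu_X$ at the cost of an extra $(1+\epsilon)$ factor. Everything else is a matter of keeping track of the $(1\pm\epsilon)$ factors and choosing ``$\epsilon$ small enough'' to absorb them into $5\epsilon$.
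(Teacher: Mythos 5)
Your proposal is correct and follows essentially the same route as the paper: the same exceptional set $E_X^c\cup\theta^{-1}(E_Y^c)$, the same splitting of $\theta^{-1}(E_Y^c)$ along $E_X$ combined with $\nu$-preservation and the density bound to control its $\mu_X$-measure, and the same three-factor comparison $\mu_X(S)\to\nu_X(S)=\nu_Y(\theta S)\to\mu_Y(\theta S)$ for the ratio estimate. No gaps.
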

\begin{proof}
First, let us define the measurable set  $E_\theta = E_X^c \bigcup \theta^{-1} (E_Y^c)$. We will show that 
\begin{enumerate}
    \item $\mu_X(E_\theta) < 4 \epsilon$.
    \item For any measurable set $S \subset X \backslash E_\theta$, we have
    $$
    \left | \frac{\mu_X(S)}{\mu_Y(\theta(S))} - 1 \right | < 5\epsilon.
    $$
\end{enumerate}
First, observe that 
\begin{align*}
\mu_X(E^c_X \bigcup \theta^{-1}(E^c_Y)) & < \mu_X(E^c_X) + \mu_X(\theta^{-1}(E^c_Y)) \\ & < \epsilon + \mu_X(\theta^{-1}(E^c_Y)).
\end{align*}
Now, we have 
$$
\mu_X(\theta^{-1}(E^c_Y)) = \mu_X(\theta^{-1}(E^c_Y) \cap E_X) + \mu_X(\theta^{-1}(E^c_Y) \cap E_X^c).
$$

The second term is easy to handle, since 
$$
\mu_X(\theta^{-1}(E^c_Y) \cap E_X^c) \leq \mu_X(E_X^c) < \epsilon,
$$
while to handle the first term, we observe that 
$$
\mu_X(\theta^{-1}(E^c_Y) \cap E_X) = \frac{\mu_X(\theta^{-1}(E^c_Y) \cap E_X)}{\nu_X(\theta^{-1}(E^c_Y) \cap E_X)} \nu_X(\theta^{-1}(E^c_Y) \cap E_X).
$$

Now, by our assumption $\theta$ preserves the measures $\nu_X$ and $\nu_Y$, therefore, we see that 
$$
\nu_X(\theta^{-1}(E^c_Y) \cap E_X) \leq \nu_X(\theta^{-1}(E^c_Y)) = \nu_Y(E_Y^c) < \epsilon
$$
and by our control on the density $\rho_X$ on $E_X$, we have 
$$
\frac{\mu_X(\theta^{-1}(E^c_Y) \cap E_X)}{\nu_X(\theta^{-1}(E^c_Y) \cap E_X)} < 1 + \epsilon < 2 .
$$
Putting all this together, we see that item 1 above follows. \\
Next, for $S \subset X \backslash E_\theta$ we have,

$$
\frac{\mu_X(S)}{\mu_Y(\theta(S))} = \frac{\mu_X(S)}{\nu_X(S)} \frac{\nu_X(S)}{\nu_Y(\theta(S))}\frac{\nu_Y(\theta(S))}{\mu_Y(\theta(S))} = \frac{\mu_X(S)}{\nu_X(S)} \frac{\nu_Y(\theta(S))}{\mu_Y(\theta(S))} 
$$
and therefore, we have 

$$
\left | \frac{\mu_X(S)}{\mu_Y(\theta(S))} - 1 \right | \leq \left | \frac{\mu_X(S)}{\nu_X(S)} - 1 \right | \left | \frac{\nu_Y(\theta(S))}{\mu_Y(\theta(S))} - 1 \right | + \left | \frac{\mu_X(S)}{\nu_X(S)} - 1 \right | + \left | \frac{\nu_Y(\theta(S))}{\mu_Y(\theta(S))} - 1 \right |. 
$$

Now, we can easily see that since $S \subset X \backslash E_\theta $, that 
$$
\left | \frac{\mu_X(S)}{\nu_X(S)} - 1 \right | < \epsilon
$$
and that 
$$
 \left | \frac{\nu_Y(\theta(S))}{\mu_Y(\theta(S))} - 1 \right | < (1-\epsilon)^{-1} \epsilon < 2 \epsilon.
$$
Hence 
$$
\left | \frac{\mu_X(S)}{\mu_Y(\theta(S))} - 1 \right | <  5 \epsilon.
$$
\end{proof}

The next lemma can be thought of as a "gluing lemma" for $\epsilon$-measure preserving maps. Given a partition of the space, and a subset $A$ of the space, where we have some "local" $\epsilon$-measure preserving maps from the the restriction of $A$ to the partition elements, it gives some conditions on these "local" maps that allows us to glue them together to produce an $\epsilon$-measure preserving map on the whole space. 
\begin{lemma} \label{glueing lemma}
Let $(X, \mu, \mathcal{F})$ be a probability space. Let $\mathcal{R} = \{R_1, \dots, R_m\}$ be a partition of $X$. Let $A \subset X$ be a measurable set, such that 
$$
\left |\frac{\mu(R_i)}{\mu_A(R_i)} - 1  \right |  < \epsilon.
$$
Let assume that for each $i =1, \dots, m$, we have a measurable map
$$
\theta_i : A \cap R_i \longrightarrow R_i.
$$
Let us assume furthermore that for $\mu$-$\epsilon$-a.e. $R_i$, we have that the map $\theta_i$ is $\epsilon$-measure preserving with respect to the measures $\mu_{A\cap R_i}$ and $\mu_{R_i}$. Then the measurable map
$$
\theta : A \longrightarrow X,
$$
defined by 
$$
\theta|_{A\cap R_i} = \theta_i, 
$$
is $C \epsilon$-measure preserving with respect to the measures $\mu_A$ and $\mu$, for some universal $C > 0$.

\end{lemma}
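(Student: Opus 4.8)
The plan is to glue the local exceptional sets together and to keep careful track of the three normalizing constants $\mu(A)$, $\mu(R_i)$, $\mu(A\cap R_i)$; there is essentially no difficulty beyond this bookkeeping. First I would fix notation: write $\mu_A(\cdot)=\mu(\cdot\cap A)/\mu(A)$, $\mu_{R_i}(\cdot)=\mu(\cdot\cap R_i)/\mu(R_i)$, $\mu_{A\cap R_i}(\cdot)=\mu(\cdot\cap A\cap R_i)/\mu(A\cap R_i)$ (discarding the indices with $\mu(R_i)=0$, which play no role), and call an index $i$ \emph{good} if $\theta_i$ is $\epsilon$-measure preserving, say with exceptional set $E_i\subset A\cap R_i$, $\mu_{A\cap R_i}(E_i)<\epsilon$, and \emph{bad} otherwise; by hypothesis $\mu\big(\bigcup_{i\text{ bad}}R_i\big)<\epsilon$. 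The single computational identity behind everything is that for measurable $S\subset A\cap R_i$ one has, unwinding the normalizations,
\[
\frac{\mu_A(S)}{\mu(\theta S)}=\frac{\mu_A(S)}{\mu(\theta_i S)}=\frac{\mu_{A\cap R_i}(S)}{\mu_{R_i}(\theta_i S)}\cdot\frac{\mu_A(R_i)}{\mu(R_i)},
\]
using $\theta S=\theta_i S\subset R_i$. The hypothesis $|\mu(R_i)/\mu_A(R_i)-1|<\epsilon$ gives $\mu_A(R_i)/\mu(R_i)=1\pm 2\epsilon$ for $\epsilon$ small, and in particular $\mu_A(R_i)<2\mu(R_i)$.

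Next I would define the exceptional set
\[
E_\theta:=\Big(\bigcup_{i\text{ bad}}(A\cap R_i)\Big)\cup\Big(\bigcup_{i\text{ good}}E_i\Big)
\]
and estimate its $\mu_A$-measure. For the first piece, $\mu_A\big(\bigcup_{i\text{ bad}}(A\cap R_i)\big)=\sum_{i\text{ bad}}\mu_A(R_i)<2\sum_{i\text{ bad}}\mu(R_i)<2\epsilon$. For the second, $\mu_A(E_i)=\mu_{A\cap R_i}(E_i)\,\mu_A(R_i)<\epsilon\,\mu_A(R_i)$, and since $\mathcal{R}$ is a partition of $X$ we have $\sum_i\mu_A(R_i)=\mu_A(X)=1$, so $\mu_A\big(\bigcup_{i\text{ good}}E_i\big)<\epsilon$. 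Hence $\mu_A(E_\theta)<3\epsilon$.

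Then, for a measurable $S\subset A\setminus E_\theta$, I would write $S=\bigsqcup_i S_i$ with $S_i=S\cap R_i$: since $E_\theta$ contains every bad piece entirely, $S_i=\emptyset$ for $i$ bad, while for $i$ good $S_i\subset(A\cap R_i)\setminus E_i$, so the displayed identity together with $|\mu_{A\cap R_i}(S_i)/\mu_{R_i}(\theta_i S_i)-1|<\epsilon$ and $\mu_A(R_i)/\mu(R_i)=1\pm 2\epsilon$ yields $\mu_A(S_i)=(1\pm 4\epsilon)\,\mu(\theta_i S_i)$. Since the images $\theta_i S_i\subset R_i$ are pairwise disjoint, $\mu(\theta S)=\sum_i\mu(\theta_i S_i)$ and $\mu_A(S)=\sum_i\mu_A(S_i)$; and because a sum of terms each lying in $[(1-\delta)a_i,(1+\delta)a_i]$ lies in $[(1-\delta)\sum a_i,(1+\delta)\sum a_i]$, we conclude $\mu_A(S)=(1\pm 4\epsilon)\mu(\theta S)$. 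Combined with $\mu_A(E_\theta)<3\epsilon$, this shows $\theta$ is $C\epsilon$-measure preserving with $C=4$. (Invertibility of the glued $\theta$ is automatic from that of the good $\theta_i$, after redefining $\theta$ by any Borel bijection on $\bigcup_{i\text{ bad}}(A\cap R_i)$, which is harmless as that set lies in $E_\theta$.)

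The one point deserving attention — the "main obstacle", such as it is — is that the per-piece multiplicative errors must \emph{not} compound when the pieces are summed over $i=1,\dots,m$; this is exactly the elementary two-sided-bound inequality for sums used above, and it is what keeps $C$ universal (independent of $m$). The secondary point is to remember $\sum_i\mu_A(R_i)=1$, which lets the $\sum_{i\text{ good}}\epsilon\,\mu_A(R_i)$ estimate close with no $m$-dependent loss; this is precisely where the hypothesis that $\mathcal{R}$ is a partition of all of $X$ is used.
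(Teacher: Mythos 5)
Your proposal is correct and follows essentially the same route as the paper's proof: the same exceptional set $E_\theta$ built from the bad pieces together with the local exceptional sets, the same normalization identity relating $\mu_A(S_i)/\mu(\theta_i S_i)$ to $\mu_{A\cap R_i}(S_i)/\mu_{R_i}(\theta_i S_i)$ via the factor $\mu_A(R_i)/\mu(R_i)$, and the same summation over the pieces using the non-compounding of two-sided multiplicative bounds. Your write-up is in fact slightly more careful than the paper's (which has a typo writing $S\subset E_\theta$ for $S\subset A\setminus E_\theta$, and does not address invertibility of the glued map), so no changes are needed.
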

\begin{proof}
Let $B \subset \mathcal{R}$ be all the atoms of $\mathcal{R}$ for which the map $\theta_i$ is not $\epsilon$-measure preserving. Let $E_i$ be the set that appears in the definition of $\epsilon$-measure preserving maps. Let us define 
$$
E_\theta := \bigcup _{R \in B} (A\cap R) \cup  \bigcup _{i: R_i \notin B} E_i.
$$
First, let $S \subset E_\theta$, and for each $R_i \in \mathcal{R} \backslash B$, we define $S_i = S \cap R_i $, then since
$$
\frac{\mu_{R_i}(\theta_i(S_i))}{\mu_{A\cap R_i}(S_i)} = \frac{\mu(\theta_i(S_i))}{\mu_{A}(S_i)} \frac{\mu_A(R_i)}{\mu(R_i)},
$$
we see that 
$$
(1-\epsilon)^2 < \frac{\mu(\theta_i(S_i))}{\mu_{A}(S_i)} < (1+\epsilon)^2.
$$
Hence, we get
$$
(1-\epsilon)^2 < \frac{\mu(\theta(S))}{\mu_A(S)} < (1+\epsilon)^2.
$$
When $\epsilon > 0$ is small enough, we obtain
$$
\left | \frac{\mu(\theta(S))}{\mu_A(S)} - 1 \right | < 3\epsilon.
$$
Therefore, the only thing left for us to do to show that $\theta: A \longrightarrow X$ is $C\epsilon$-measure preserving is to show that $\mu_A(E_\theta) < C \epsilon$. To see this, first for any $i$ such that $R_i \in \mathcal{R}\backslash B$, we have by our assumption 
$$
\mu_{A\cap R_i}(E_i) < \epsilon,
$$
which implies that 
$$
\mu_A(E_i) < \epsilon \mu_A(R_i).
$$
On the other hand
$$
\sum_{R \in B} \mu(R)  < \epsilon,
$$
and since we have 
$$
\mu_A(R_i) < C \mu(R_i),
$$
we see that 
$$
\sum_{R \in B} \mu_A(R) < C \epsilon.
$$
Putting all of these estimates together, we get
$$
\mu_A(E_\theta) < 2C \epsilon.
$$
Hence the map $\theta : A \longrightarrow X $ is $C\epsilon$-measure preserving.
\end{proof}
We also have the following extremly simple lemma, which we will use multiple times in the proof of the Bernoulli property. 
\begin{lemma}\label{measure of bad atoms}
Let $(X, \mathcal{F}, \mu)$ be a Lebesgue probability space. Let $\eta$ be a fine measurable partition, and let $B \in \mathcal{F}$ be a measurable set, such that $\mu(B) < \delta$. Let us define 
$$
\hat B := \{ A \in \eta \ | \mu_A(B) \geq \delta^{\frac12} \}
$$
then  we have $\mu \left( \bigcup\limits_{A \in \hat B} A\right) < \delta^{\frac12}$.
\end{lemma}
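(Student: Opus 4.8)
The plan is to read this as a Chebyshev/Markov inequality for the function $A \mapsto \mu_A(B)$ on the factor space of $\eta$. First I would apply Rokhlin's disintegration theorem: since $(X,\mathcal{F},\mu)$ is a Lebesgue probability space and $\eta$ is a measurable partition, there is a family of conditional measures $(\mu_A)_{A\in\eta}$ together with a factor measure $\tilde\mu$ on the quotient $X/\eta$ such that, for every $B\in\mathcal{F}$, the map $A\mapsto \mu_A(B)$ is $\tilde\mu$-measurable and
$$
\mu(B)=\int_{X/\eta}\mu_A(B)\,d\tilde\mu(A).
$$

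Next I would identify the two quantities appearing in the statement with objects on the factor space. Viewing $\hat B$ as a subset of $X/\eta$, it is precisely $\{A:\mu_A(B)\ge \delta^{1/2}\}$, hence $\tilde\mu$-measurable by the measurability clause above; and if $\pi:X\to X/\eta$ denotes the canonical projection, then $\bigcup_{A\in\hat B}A=\pi^{-1}(\hat B)$, so that, since $\tilde\mu=\pi_*\mu$,
$$
\mu\Big(\bigcup_{A\in\hat B}A\Big)=\tilde\mu(\hat B).
$$

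With these identifications the estimate is immediate: restricting the disintegration formula to $\hat B$ and using $\mu_A(B)\ge\delta^{1/2}$ on $\hat B$,
$$
\delta>\mu(B)\ \ge\ \int_{\hat B}\mu_A(B)\,d\tilde\mu(A)\ \ge\ \delta^{1/2}\,\tilde\mu(\hat B),
$$
so that $\mu\big(\bigcup_{A\in\hat B}A\big)=\tilde\mu(\hat B)<\delta^{1/2}$, which is exactly the asserted bound.

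The only step requiring any attention is the measurability bookkeeping — that $A\mapsto\mu_A(B)$ is a measurable function on the factor space and that the $\eta$-saturated set $\bigcup_{A\in\hat B}A$ is measurable with $\mu$-measure equal to $\tilde\mu(\hat B)$ — but this is part of the content of Rokhlin's theorem for Lebesgue spaces, so there is no genuine obstacle here; in particular no dynamics, and no ``fineness'' of $\eta$ beyond its being a measurable partition, is used.
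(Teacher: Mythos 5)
Your proof is correct, and since the paper states this lemma without proof (calling it ``extremely simple''), the Chebyshev-type argument via the Rokhlin disintegration formula $\mu(B)=\int \mu_A(B)\,d\tilde\mu(A)$ is exactly the intended one; your observation that fineness of $\eta$ plays no role is also accurate. The only remark worth adding is that in the paper $\mu_A$ denotes the normalized restriction of $\mu$ to $A$, which agrees with the Rokhlin conditional measure when the atoms have positive measure (the finite-partition applications) and must be read as the Rokhlin conditional measure in the continuous case (partitions by unstable manifolds), so your formulation covers both uses.
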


\begin{lemma}\label{equivlence of product measures}
Let $(X, \mathcal{F}_X, \mu)$ and $(Y, \mathcal{F}_Y, \nu)$ be two Lebesgue probability spaces, and such that the $\sigma$-algebra $\mathcal{F}_X$ is countably generated. Let $(\mu_y)_{y \in Y}$ be a family of probability measures on $(X, \mathcal{F})$, and let us assume the following:
\begin{enumerate}
    \item The measures $\mu_y$ and $\mu$ are equivalent for $\nu$-a.e. $y \in Y$.
    \item $\int_{X} \varphi(., y) d\mu_y$ is in $L^1(Y, \nu)$, for any $\varphi \in L^1(X\times Y, \mu \times \nu) $.
\end{enumerate}

Let us define the measure $\mu_{\nu}$ by
$$
\mu_\nu(\varphi) = \int_Y \left(\int_X \varphi(x, y) d\mu_y(x)\right)d\nu(y).
$$

Then, the probability measures $\mu \times \nu$ and $\mu_\nu$ are equivalent, and we have 
$$
\frac{d\mu_\nu}{d(\mu \times \nu)}(x,y) = \frac{d\mu_y}{d\mu}(x)  ,
$$
for $(\mu \times \nu)$-a.e. $(x, y) \in X \times Y$.
\end{lemma}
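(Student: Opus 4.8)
The plan is to exhibit a jointly measurable version of the densities $d\mu_y/d\mu$ and then reduce the whole statement to Tonelli's theorem. For $\nu$-a.e. $y$ write $\rho_y := \frac{d\mu_y}{d\mu}\in L^1(X,\mu)$; this exists and is strictly positive $\mu$-a.e. by hypothesis (1) (equivalence of $\mu_y$ and $\mu$). The only substantive point is that we may choose a function $\rho : X\times Y\longrightarrow [0,\infty]$ which is $\mathcal{F}_X\otimes\mathcal{F}_Y$-measurable and satisfies $\rho(\cdot,y)=\rho_y$ in $L^1(\mu)$ for $\nu$-a.e. $y$. Granting this, for nonnegative $\varphi\in L^1(X\times Y,\mu\times\nu)$ one has $\int_X\varphi(x,y)\,d\mu_y(x)=\int_X\varphi(x,y)\rho(x,y)\,d\mu(x)$ for $\nu$-a.e. $y$, hence
$$
\mu_\nu(\varphi)=\int_Y\!\int_X \varphi(x,y)\rho(x,y)\,d\mu(x)\,d\nu(y)=\int_{X\times Y}\varphi\,\rho\;d(\mu\times\nu)
$$
by Tonelli (all integrands nonnegative and measurable). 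Applying this to $\varphi=\mathbf{1}_S$ for $S\in\mathcal{F}_X\otimes\mathcal{F}_Y$ shows $\mu_\nu\ll\mu\times\nu$ with $\frac{d\mu_\nu}{d(\mu\times\nu)}=\rho$; taking $\varphi\equiv 1$ (and noting $\mu_y(X)=1$) shows $\rho\in L^1(\mu\times\nu)$ with total integral $1$, and hypothesis (2) guarantees $\mu_\nu$ is a well-defined (indeed probability) measure.

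To build $\rho$ I would use that $\mathcal{F}_X$ is countably generated. Fix a countable generating family $(A_k)_{k\geq 1}$ and let $\xi_n$ be the finite partition of $X$ generated by $A_1,\dots,A_n$, so that $\xi_1\leq\xi_2\leq\cdots$ and $\bigvee_n\mathcal{F}_{\xi_n}=\mathcal{F}_X$. For an atom $P$ of $\xi_n$ with $\mu(P)>0$ set $\rho^{(n)}(x,y):=\mu_y(P)/\mu(P)$ for $x\in P$, and $\rho^{(n)}(x,y):=0$ on atoms of zero $\mu$-measure. Each $\rho^{(n)}$ is $\mathcal{F}_X\otimes\mathcal{F}_Y$-measurable because $y\mapsto\mu_y(P)=\int_X\mathbf{1}_P(x)\,d\mu_y(x)$ is $\mathcal{F}_Y$-measurable by hypothesis (2) applied to $\varphi=\mathbf{1}_{P\times Y}\in L^1(\mu\times\nu)$. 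For $\nu$-a.e. $y$ the function $x\mapsto\rho^{(n)}(x,y)$ is the conditional expectation $\mathbb{E}_\mu[\rho_y\mid\mathcal{F}_{\xi_n}]$, so by the martingale convergence theorem $\rho^{(n)}(\cdot,y)\to\rho_y$ $\mu$-a.e. and in $L^1(\mu)$. Hence $\rho(x,y):=\limsup_{n}\rho^{(n)}(x,y)$ is jointly measurable and agrees with $\rho_y$ $\mu$-a.e. for $\nu$-a.e. $y$, as required.

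For the reverse absolute continuity: by hypothesis (1) the measures $\mu_y$ and $\mu$ are equivalent for $\nu$-a.e. $y$, so $\rho(\cdot,y)=\rho_y>0$ $\mu$-a.e. for $\nu$-a.e. $y$, and by Tonelli $\rho>0$ $(\mu\times\nu)$-a.e. Therefore $1/\rho$ is a density of $\mu\times\nu$ with respect to $\mu_\nu$, the two measures are equivalent, and $\frac{d\mu_\nu}{d(\mu\times\nu)}=\rho$ as claimed. The main obstacle is the construction of the jointly measurable density $\rho$: this is exactly where the countable generation of $\mathcal{F}_X$ is used, via the martingale approximation by conditional densities over finite partitions; once joint measurability is secured, the remainder is a routine application of Tonelli's theorem.
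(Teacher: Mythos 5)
Your proof is correct, but it takes a genuinely different route from the paper's. The paper first argues that $\mu_\nu$ and $\mu\times\nu$ are mutually absolutely continuous by testing null sets of the form $A_X\times A_Y$, then takes the Radon--Nikodym derivative $\rho=\frac{d\mu_\nu}{d(\mu\times\nu)}$ as given and identifies it with $\rho_y(x)$ by plugging $\varphi=\chi_{A_X}\chi_{A_Y}$ into the defining identity, using Fubini and the countable generation of $\mathcal{F}_X$ only at the last step to pass from a full-measure set of $y$ depending on $A_X$ to a single full-measure set working for all sets in a countable base. You instead construct a jointly measurable version of $(x,y)\mapsto\frac{d\mu_y}{d\mu}(x)$ up front, via martingale convergence of the conditional densities $\mu_y(P)/\mu(P)$ over the finite partitions generated by a countable generating family, and then obtain both the absolute continuity $\mu_\nu\ll\mu\times\nu$ and the density formula in one stroke from Tonelli; the reverse direction follows from strict positivity of $\rho$. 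Your approach buys two things: it establishes joint measurability of the densities constructively (the paper can only extract this a posteriori, as its subsequent remark notes), and it verifies $\mu_\nu\ll\mu\times\nu$ on \emph{all} measurable sets rather than only on product sets, which is the logically complete way to get mutual absolute continuity. The paper's approach is shorter in that it avoids the martingale machinery and leans on the abstract existence of the Radon--Nikodym derivative once equivalence is accepted. One point worth making explicit in your write-up: the identity $\int_X\varphi(x,y)\,d\mu_y(x)=\int_X\varphi(x,y)\rho(x,y)\,d\mu(x)$ requires that $\rho(\cdot,y)$ be a genuine version of $d\mu_y/d\mu$, which your martingale limit delivers only for $\nu$-a.e.\ $y$ -- that is exactly the exceptional set allowed by the statement, so no harm is done, but the quantifier should be tracked.
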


\begin{proof}
First, to see that the measure $\mu_\nu$ and $\mu \times \nu$ are equivalent, let $A \subset X \times Y$ be a measurable subset, such that $A = A_X \times A_Y$, where $A_X \in \mathcal{F_X}$, and $A_Y \in \mathcal{F}_Y$. First, assume that $(\mu \times \nu)(A) = 0$, then we either have $\mu(A_X) = 0$ or $\nu(A_Y) = 0$. In the first case, we see that from the first condition in the lemma, we have $\mu_y(A_X) = 0$ for $\nu$-a.e. $y \in Y$. Now, we have 
$$
\mu_\nu(A) = \int_{A_Y} \mu_y(A_x)d\nu = 0
$$
in both cases. Therefore $\mu_n << \mu \times \nu$. The other direction is similar, if we assume that $A$ is as above, and that $\mu_\nu(A)=0$, then 
$$
\int_{A_Y} \mu_y(A_x)d\nu = 0.
$$
This implies that we either have that $\mu_y(A_x) = 0$ for $\nu$-a.e. $y \in Y$, or that $\nu(A_Y) =0$. Either case imply that $\mu \times \nu(A) = 0$. Therefore, the measures $\mu \times \nu$ and $\mu_\nu$ are equivalent. 
\\
Now, Let us define the densities $ \rho_y:= \frac{d\mu_y}{d\mu}$, and $\rho :=\frac{d\mu_\nu}{d(\mu \times \nu)}$, we want to show that 
$$
\rho(x,y) = \rho_y(x).
$$
To do that, let $\varphi \in L^1(\mu \times \nu)$, then we have by the definition of $\rho$ that 
$$
\mu_\nu(\varphi) = \int_{X \times Y} \varphi   \rho d(\mu \times \nu) = \int_{Y} \left(\int_X \varphi(x,y) \rho(x,y) d\mu(x) \right) d\nu(y),
$$
where the last equality is obtained using Fubini's Theorem. On the other hand, by the definition of the measure $\mu_\nu$, we have 
$$
\mu_\nu(\varphi)  = \int_Y \left( \int_X \varphi(x,y) d\mu_y(x)\right) d\nu(y).
$$
Therefore, we have for any $\varphi \in L^1(\mu \times \nu)$
$$
\int_{Y} \left(\int_X \varphi(x,y) \rho(x,y) d\mu(x) \right) d\nu(y)  = \int_Y \left( \int_X \varphi(x,y)d\mu_y(x)\right) d\nu(y).
$$
Let us take $\varphi(x,y) = \chi_{A_X}(x)  \chi_{A_Y}(y)$, where $\chi_{A_X}$ and $\chi_{A_Y}$ are the characteristic functions of $A_X \in \mathcal{F}_X$ and $A_Y \in \mathcal{F}_Y$ respectively. Then we see that the last equality above reduces to
$$
\int_{A_Y}\left(\int_{A_X}\rho(x,y)d\mu(x) \right) d\nu(y) = \int_{A_Y} \mu_y(A_X) d\nu(y).
$$
Now, since the set $A_Y \in \mathcal{F}_Y$ is arbitrary, and by the fact that $\mu_y(A_X)$ and $\int_{A_X}\rho(x,y)d\mu(x)$ are in $L^1(\nu)$, we see that we have the equality
$$
\mu_y(A_X) = \int_{A_X}\rho(x,y)d\mu(x),
$$
for all $y \in E_{A_X}$, where $\nu(E_{A_X}) = 1$. Now, since $\mathcal{F}_X$ is countably generated, we can find a countable base $\left(A_X^{(n)}\right)_{n \geq 1}$. Let $E = \bigcap_{n\geq 1} E_{A_X^{(n)}}$, then we see that $\nu(E) = 1$, and that for every $y \in E$, we have 
$$
\mu_y(A_X^{(n)}) = \int_{A_X^{(n)}}\rho(x,y)d\mu(x),
$$
for every $n \geq 1$. This implies that 
$$
\frac{d\mu_y}{d\mu}(x) = \rho(x,y),
$$
for all $x \in E_y$, for $\nu$-a.e. $y \in Y$, where $\mu(E_y) = 1$. Using Fubini's Theorem, we see that the equality holds for $(\mu \times \nu)$-a.e. $(x, y) \in X \times Y$, which is what we want.
\end{proof}

\begin{remark}
Under the conditions of lemma \ref{equivlence of product measures}, it is not apriori obvious if the densities $\rho_y(x) = \frac{d\mu_y}{d\mu}(x)$ are jointly measurable as a function on $X \times Y$. One consequence of the lemma is that  $\rho_y(x)$ is actually jointly measurable, which follows from $\rho_y(x) = \rho(x,y)$, for $(\mu \times \nu)$-a.e. $(x,y) \in X \times Y$, and the fact that $\rho(x,y) = \frac{d\mu_\nu}{d(\mu \times \nu)}$ is a measurable function. 
\end{remark}

\begin{lemma}\label{equiv of product measures}
Let $(X, \mathcal{F}_X, \mu)$ and $(Y, \mathcal{F}_Y, \nu)$ be two Lebesgue probability spaces, and such that the $\sigma$-algebra $\mathcal{F}_X$ is countably generated. Let us assume that we have a measurable partitions $(X_y)_{y\in Y}$ of $X \times Y$, and let $(\mu_y)_{y \in Y}$ be a family of probability measures on $(X_y, \mathcal{F}_y)$. Assume that for each $y \in Y$, there is a measurable invertible map $h_y:X_y \longrightarrow X$, such that the following holds
\begin{enumerate}
    \item The measures $(h_y)_*\mu_y$ and $\mu$ are equivalent for $\nu$-a.e. $y \in Y$.
    \item $\int_{X_y} \varphi(h_y(x), y) d\mu_y$ is in $L^1(Y, \nu)$, for any $\varphi \in L^1(X\times Y, \mu \times \nu) $.
    
\end{enumerate}
Let us define the measure $\mu_{\nu}$ by:
$$
\mu_\nu(\varphi) = \int_Y \left(\int_{X_y} \varphi(h_y(x), y) d\mu_y(x)\right)d\nu(y).
$$
Then, the probability measures $\mu \times \nu$ and $\mu_\nu$ are equivalent, and we have 
$$
\frac{d\mu_\nu}{d(\mu \times \nu)}(x,y) = \frac{d((h_y)_*\mu_y)}{d\mu}(x),  
$$
for $(\mu \times \nu)$-a.e. $(x, y) \in X \times Y$.
\end{lemma}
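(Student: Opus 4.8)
The plan is to reduce this statement to Lemma~\ref{equivlence of product measures} by pushing everything forward through the maps $h_y$. For $\nu$-a.e.\ $y \in Y$, set $\tilde\mu_y := (h_y)_*\mu_y$, a probability measure on $(X, \mathcal{F}_X)$. Since each $h_y$ is a measurable invertible map from $(X_y,\mathcal{F}_y)$ onto $(X,\mathcal{F}_X)$, the change of variables formula gives
$$
\int_{X_y} \varphi(h_y(x), y)\, d\mu_y(x) = \int_X \varphi(x, y)\, d\tilde\mu_y(x)
$$
for every $\varphi \in L^1(X \times Y, \mu \times \nu)$ (and for every nonnegative measurable $\varphi$). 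Consequently the measure $\mu_\nu$ defined in the statement coincides with the measure obtained by applying the construction of Lemma~\ref{equivlence of product measures} to the family $(\tilde\mu_y)_{y\in Y}$.

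Next I would check that $(\tilde\mu_y)_{y\in Y}$ satisfies the hypotheses of Lemma~\ref{equivlence of product measures}. Hypothesis~(1) there — that $\tilde\mu_y$ and $\mu$ are equivalent for $\nu$-a.e.\ $y$ — is precisely hypothesis~(1) of the present lemma. Hypothesis~(2) there — that $y \mapsto \int_X \varphi(\cdot, y)\, d\tilde\mu_y$ lies in $L^1(Y,\nu)$ for every $\varphi \in L^1(\mu\times\nu)$ — follows from hypothesis~(2) here combined with the change of variables identity above; in particular the measurability in $y$ of $\int_X \varphi(\cdot,y)\,d\tilde\mu_y$ is already implicit in the assumption that $\int_{X_y}\varphi(h_y(x),y)\,d\mu_y(x)$ defines an element of $L^1(Y,\nu)$.

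With the hypotheses verified, Lemma~\ref{equivlence of product measures} applied to $(\tilde\mu_y)$ yields at once that $\mu\times\nu$ and $\mu_\nu$ are equivalent and that
$$
\frac{d\mu_\nu}{d(\mu\times\nu)}(x,y) = \frac{d\tilde\mu_y}{d\mu}(x) = \frac{d\big((h_y)_*\mu_y\big)}{d\mu}(x)
$$
for $(\mu\times\nu)$-a.e.\ $(x,y) \in X\times Y$, which is exactly the desired conclusion (and, as in the remark following Lemma~\ref{equivlence of product measures}, this identity also yields the joint measurability of $(x,y)\mapsto \frac{d((h_y)_*\mu_y)}{d\mu}(x)$).

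The only genuinely delicate point is the bookkeeping around the change of variables: one must ensure that $\mathcal{F}_y$ may be taken to be the pullback $h_y^{-1}\mathcal{F}_X$, so that $h_y$ is a bona fide isomorphism of measurable spaces and the pushforward $\tilde\mu_y$ together with its Radon--Nikodym density against $\mu$ behaves as expected under the substitution $x \mapsto h_y(x)$. Granting this, the reduction is routine and the substance of the argument is entirely contained in the previous lemma; I do not anticipate any obstacle beyond this.
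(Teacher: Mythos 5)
Your proposal is correct and is essentially identical to the paper's own proof: both define $\tilde\mu_y := (h_y)_*\mu_y$, rewrite the inner integral by change of variables, and then invoke Lemma \ref{equivlence of product measures} for the family $(\tilde\mu_y)_{y\in Y}$. No issues.
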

\begin{proof}
This is a simple corollary of the previous lemma. Let us define the measures $\mu_y' := (h_y)_* \mu_y$, these are measures on $X$, and by our assumptions, we have that $\mu_y'$ and $\mu$ are equivalent measures, for $\nu$-a.e. $y \in Y$. Next, we see that
$$
\mu_\nu(\varphi) = \int_Y \left(\int_{X_y} \varphi(h_y(x), y) d\mu_y(x)\right)d\nu(y) = \int_Y \left(\int_{X} \varphi(x, y) d\mu_y'(x)\right)d\nu(y),
$$
where the last equality holds by the definition of the push-forward measure $\mu_y'$. Now, all the assumptions of lemma \ref{equivlence of product measures} are satisfied, with respect to the measures $\mu$, $\nu$ and the family $(\mu_y')){y \in Y}$. Hence, we see that we have 
$$
\frac{d\mu_\nu}{d(\mu \times \nu)}(x,y) =\frac{d\mu_y'}{d\mu}(x)= \frac{d((h_y)_*\mu_y)}{d\mu}(x),  
$$
for $(\mu, \nu)$-a.e. $(x, y) \in X \times Y$, which is what we want.
\end{proof}
\end{section}

\begin{lemma}\label{entropy zero atomic measure}
Let $\mu$ be a hyperbolic measure and $\xi^u$ be a measurable partitions subordinate to unstable manifolds. Assume that $h_\mu(f) = 0$, then the  conditional measures $\mu_{\xi^u(x)}$ are atomic, for $\mu$-a.e. $x \in M$.
\end{lemma}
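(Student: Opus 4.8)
The plan is to read the conclusion off the identification of the Pinsker partition with the measurable hull of the global unstable foliation, avoiding any entropy formula along unstable manifolds. First, since $h_\mu(f)=0$, every two--set partition has zero entropy (as $h_\mu(f,\{A,M\setminus A\})\le h_\mu(f)=0$), so the maximal $\sigma$-algebra appearing in the definition of the Pinsker partition is the full Borel $\sigma$-algebra $\mathcal{F}$, and therefore $\pi(f,\mu)=\epsilon \mod\mu$, where $\epsilon$ is the partition into points. On the other hand, applying Proposition \ref{subordinate partitions} to $f^{-1}$ (equivalently, its unstable version) and using $\pi(f,\mu)=\pi(f^{-1},\mu)$, we obtain $\pi(f,\mu)=\mathcal{H}(W^+)$, where $W^+$ is the partition into global unstable manifolds; this is applicable here since Proposition \ref{subordinate partitions} requires only a non-zero Lyapunov exponent, not positive entropy, and $\mu$ is hyperbolic.

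Combining the two facts gives $\mathcal{H}(W^+)=\epsilon \mod\mu$. Since the measurable hull of a partition is always a coarsening of it, $\mathcal{H}(W^+)\le W^+$, hence $\epsilon\le W^+$; as $W^+\le\epsilon$ trivially, we conclude $W^+=\epsilon \mod\mu$. Concretely, this means there is a set $X_0$ with $\mu(X_0)=1$ such that $W^+(x)\cap X_0=\{x\}$ for every $x\in X_0$.

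Now let $\xi^u$ be any measurable partition subordinate to unstable manifolds, so that $\xi^u(x)$ is contained in the global unstable manifold $W^+(x)$ for $\mu$-a.e.\ $x$. Then $\xi^u(x)\cap X_0=\{x\}$ for $\mu$-a.e.\ $x$. Disintegrating $\mu=\int_M\mu_{\xi^u(y)}\,d\mu(y)$ and using $\mu(X_0)=1$, the conditional measure $\mu_{\xi^u(y)}$, which is carried by $\xi^u(y)$, must give full mass to $\xi^u(y)\cap X_0=\{y\}$ for $\mu$-a.e.\ $y$; that is, $\mu_{\xi^u(y)}=\delta_y$, which in particular is atomic.

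The argument is short, and there is no serious obstacle: the only points worth checking carefully are the coarsening property $\mathcal{H}(W^+)\le W^+$ and the validity of $\pi(f,\mu)=\mathcal{H}(W^+)$ without any positive-entropy hypothesis, both of which are standard. One could instead argue through the Rokhlin--Ledrappier entropy formula $h_\mu(f)=H_\mu(f^{-1}\xi^u\mid\xi^u)$ for the increasing subordinate partition $\xi^u$ furnished by Proposition \ref{subordinate partitions}: vanishing of this conditional entropy forces $f^{-1}\xi^u=\xi^u \mod\mu$, hence $\epsilon=\bigvee_{n\ge 0}f^{-n}\xi^u=\xi^u \mod\mu$, and again $\mu_{\xi^u(x)}=\delta_x$. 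However, that route still requires transferring the conclusion from this particular partition to an arbitrary subordinate partition, a step that the hull argument above bypasses entirely.
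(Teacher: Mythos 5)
Your argument is correct, but it is genuinely different from the paper's. The paper works directly with the Ledrappier--Young entropy formula $h_\mu(f)=H(f^{-1}\xi^u\mid\xi^u)$: vanishing entropy forces $\mu_{\xi^u(x)}\bigl(f^{-n}(\xi^u(f^n(x)))\bigr)=1$ for all $n$, and then return times to a Pesin block make the nested sets $f^{-n_k}(\xi^u(f^{n_k}x))$ shrink to a point, so the conditional measure is a single atom. You instead invoke the other half of the Ledrappier--Young circle of results quoted in Proposition \ref{subordinate partitions}, namely $\pi(f,\mu)=\mathcal{H}(W^+)$, combine it with the trivial observation that $h_\mu(f)=0$ forces $\pi(f,\mu)=\epsilon \bmod \mu$, and then pass from $\mathcal{H}(W^+)=\epsilon$ to $W^+=\epsilon \bmod\mu$ via the coarsening property of the hull; the disintegration step at the end is sound and yields the sharper conclusion $\mu_{\xi^u(x)}=\delta_x$. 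What your route buys: it applies verbatim to \emph{any} subordinate partition (the paper's proof implicitly needs the increasing ``Markov'' partition of Proposition \ref{subordinate partitions}, both for the entropy formula and for the nesting in Step 2), and it requires no geometric contraction estimate on Pesin blocks. What it costs: you lean on the hard direction of the identity $\pi(f,\mu)=\mathcal{H}(W^+)$ (that Pinsker sets are unions of unstable leaves mod $0$), which is a strictly deeper input than the entropy formula alone; the paper's proof is in that sense more self-contained, and it is also the version that sets up the contraction argument reused in the proof of Lemma \ref{entropy zero implies periodic}. Your closing sketch of the entropy-formula route is essentially the paper's proof, and you correctly flag the one point it must address (transferring from the special increasing partition to a general subordinate one), which the paper handles only later, at the level of the application to $\mu_{V^u_R(x)}$.
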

\begin{proof}
\textbf{step 1.} By \cite{LY}, we know that 
$$
h_\mu(f) = H\left(f^{-1} \xi^u | \xi^u \right),
$$
where the conditional entropy $H(\eta | \xi)$ is defined for any pair of measurable partitions $\xi < \eta$, by: 
$$
H\left( \eta | \xi\right) = - \int \log \left( \mu_{\xi(x)} \left(\eta(x) \right) \right ) d\mu(x). 
$$

Therefore, 
$$
h_\mu(f) = H\left(f^{-1} \xi^u | \xi^u \right) = -\int \log \left(\mu_{\xi^u(x)} \left(f^{-1} \left( \xi^u \left( f(x) \right)\right) \right) \right)d\mu.
$$
Now, since $h_\mu(x) = 0$, we get 
$$
\log \left(\mu_{\xi^u(x)} \left(f^{-1} \left( \xi^u \left( f(x) \right)\right) \right) \right) = 0 \Longrightarrow \mu_{\xi^u(x)} \left(f^{-1} \left( \xi^u \left( f(x) \right)\right) \right) = 1 ,
$$
for $\mu$-a.e. $x \in M$. Next, by replacing $f$ by $f^n$ for any $n \geq 1$, and observing that $h_\mu(f^n) = n h_\mu(f) = 0$, we can repeat the same argument to get
$$
\mu_{\xi^u(x)} \left(f^{-n} \left( \xi^u \left( f^n(x) \right)\right) \right) = 1 ,
$$
for all $x\in E_n$, where $E_n$ is a measurable and $\mu(E_n) = 1$.\\ 
\\
\textbf{Step 2} Define $E  = \bigcap_{n\geq 1} E_n$. Then, since $\mu(E) = 1$, we conclude that for $\mu$-a.e. $x \in M$ and for all $n \geq 1$ 
$$
\mu_{\xi^u(x)} \left(f^{-n} \left( \xi^u \left( f^n(x) \right)\right) \right) = 1. 
$$
Next, for $\mu$-a.e. point $x \in E$, there is a number $\ell_x > 0$ and an increasing sequence of times $n_1 < n_2 < \dots$, such that $f^{n_k}(x) \in \Lambda ^{\ell_x}$. let $r_0>0$ be an upper bound on the size of the elements of the partition $\xi^u$, then the size of $f^{-n_k}\left(\xi^u \left( f^{n_k}x \right) \right)$ is bounded above by $C_{\ell_x}\lambda^{-n_k}r_0$, for some $\lambda > 1$. We also know, by the Markov property of the partition $\xi^u$, that
$$
f^{-n_k}\left(\xi^u \left( f^{n_k}x \right) \right) \subset f^{-n_{k+1}}\left(\xi^u \left( f^{n_{k+1}}x \right) \right)
$$
Therefore, the set
$$
\bigcap _{k \geq 1} f^{-n_k}\left(\xi^u \left( f^{n_k}x \right) \right)
$$
can contain at most one point. Now, since we have 
$$
\mu_{\xi^u(x)} \left(\bigcap _{k \geq 1} f^{-n_k}\left(\xi^u \left( f^{n_k}x \right) \right) \right) = 1,
$$
for $x \in E$, we conclude that the measure $\mu_{\xi^u(x)}$ is atomic for $\mu$-a.e. point $x \in M$, and that it actually consists of exactly one atom.
\end{proof}
\begin{proof}[Proof of Lemma \ref{entropy zero implies periodic}]
Let $\xi^u$ be a measurable partition subordinate to unstable manifolds. Then using lemma \ref{entropy zero atomic measure}, the conditional measures $\mu_{\xi^u(x)}$ are atomic for $\mu$-a.e. point $x\in M$. Let $R$ be a hyperbolic rectangle of positive $\mu$-measure. Since the unstable manifold of $\mu$-a.e. point $x \in R$ can be partitioned by $\xi^u$ into at most countably many elements, we see that for $\mu$-a.e. point $x \in R$, the conditional measure $\mu_{V^u_R(x)}$ is atomic, with possibly a countable number of atoms. Let $x \in R$ be such a point, and let $x_0 \in V^u_R$ be an atom of $\mu_{V^u_R(x)}$. Then, using the local product structure, we get
$$
\mu_{V^u_R(y)}(\pi^s_{x,y}(x_0)) > 0,
$$
for $\mu$-a.e. point $y \in R$. In particular, this implies $\mu(V^s_R(x_0)) > 0$. Now, let us assume that we have point $x \in M$, which satisfies
$$
\mu\left(V^s_{loc}(x) \cap B_\delta(x)\right) > 0,
$$
for all $\delta > 0$. We will inductively build a sequence of nested compact sets $V_1 \supset V_2 \supset V_3 \supset \dots$, and such that
\begin{enumerate}
    \item $\mu(V_k) = \mu(V_1)$,
    \item $diam\left( V_k\right) \longrightarrow 0$ as $k \rightarrow \infty$.
\end{enumerate}
We see that the existence of such a sequence immediately implies that the measure has an atom, since $\bigcap_{k \geq 1}V_k$ consists of a single point, and has positive measure. We construct this sequence of sets inductively. Let $V_1 := V^s_{loc}(x)$, $\delta_1 = \frac12 diam(V_1)$ and $W_0 := V_1 \cap B_{\delta_1}(x)$. Then, since $\mu(W_1) > 0$, Poincare theorem implies the existence of a point $w \in W_1$ such that $f^n(w) \in W_1$, for infinitely many $n > 0$. Hence, by choosing such $n_1$ large enough, we can ensure that $f^{n_1}\left(V_1 \right)$ has a diameter as small as we want. Now, since $f^{n_k}(V_1)$ is a stable manifold of a small diameter, we conclude that it is a subset of $V_1$, since it intersects the stable manifold $V_1$ at the point $w$. Let us define $V_2 := f^{n_1}\left( V_1\right)$, and repeat the argument. So, we end up with a nested sequence of compact sets,  $V_1 \supset V_2 \supset \dots$. We also see that the two conditions are satisfied, since $V_{k+1} = f^{n_k}\left(V_k \right)$. Therefore, the measure $\mu$ is atomic, and hence it is supported on periodic points by the invariance under $f$.     
\end{proof}

\end{document}